\definecolor{greene}{rgb}{0,0.6,0}
\DeclareMathOperator{\Z}{Z}
\DeclareMathOperator{\ft}{ft}
\def\ps@pprintTitle{%
  \let\@oddhead\@empty
  \let\@evenhead\@empty
  \def\@oddfoot{\reset@font\hfil\thepage\hfil}
  \let\@evenfoot\@oddfoot
}
\newtheorem{theorem}{Theorem}[section]
\newtheorem{lemma}[theorem]{Lemma}
\newtheorem{corollary}[theorem]{Corollary}
\newtheorem{observation}[theorem]{Observation}
\newtheorem{conjecture}[theorem]{Conjecture}
\newtheorem{proposition}[theorem]{Proposition}
\newtheorem{question}[theorem]{Question}
\newtheorem*{claim*}{Claim}
\theoremstyle{definition}
\newtheorem{definition}[theorem]{Definition}
\newtheorem{example}[theorem]{Example}
\newtheorem{remark}[theorem]{Remark}
\numberwithin{figure}{section}   
\numberwithin{table}{section}   
\numberwithin{equation}{section}  
\newcommand{\abs}[1]{\left\vert#1\right\vert}
\newcommand{\rank}{\operatorname{rank}}
\newcommand{\nul}{\operatorname{null}}
\newcommand{\G}{\mathcal{G}}
\newcommand{\F}{\mathcal{F}}
\newcommand{\PP}{\mathfrak{P}}
\newcommand{\R}{\mathbb{R}} 
\newcommand{\Rnn}{\R^{n\times n}} 
\newcommand{\bx}{{\bf x}}
\newcommand{\bv}{{\bf v}}
\newcommand{\bzero}{{\bf 0}}
\newcommand{\mr}{\operatorname{mr}}
\newcommand{\M}{\operatorname{M}}
\newcommand{\n}{\operatorname{N}}
\newcommand{\PC}{\operatorname{P}}
\newcommand{\PAR}{\mathcal{P}}
\newcommand{\LL}{\mathcal{L}}
\newcommand{\T}{\mathcal{T}}
\newcommand{\x}{\times}
\newcommand{\bit}{\begin{itemize}}
\newcommand{\eit}{\end{itemize}}
\newcommand{\ben}{\begin{enumerate}}
\newcommand{\een}{\end{enumerate}}
\newcommand{\beq}{\begin{equation}}
\newcommand{\eeq}{\end{equation}}
\newcommand{\bea}{\begin{eqnarray*}}
\newcommand{\eea}{\end{eqnarray*}}
\newcommand{\bean}{\begin{eqnarray}}
\newcommand{\eean}{\end{eqnarray}}
\newcommand{\bpf}{\begin{proof}}
\newcommand{\epf}{\end{proof}\ms}
\newcommand{\bmt}{\begin{bmatrix}}
\newcommand{\emt}{\end{bmatrix}}
\newcommand{\ms}{\medskip}
\newcommand{\beqa}{\begin{array}}
\newcommand{\eeqa}{\end{array}}
\newcommand{\OL}{\overline}
\newcommand{\lf}{\left\lfloor}
\newcommand{\rf}{\right\rfloor}
\newcommand{\lp}{\left(}
\newcommand{\rp}{\right)}
\newcommand{\cp}{\,\Box \,}
\title{
%Zero forcing on graph products via forts and (fractional) hypergraph transversals
Forts, (fractional) zero forcing, and Cartesian products of graphs
}
\author[inst1]{Thomas R. Cameron}
\affiliation[inst1]{organization={Department of Mathematics, Penn State Behrend},%Department and Organization
            city={Erie},
            postcode={16563}, 
            state={PA},
            country={USA}}
\author[inst2,inst3,inst4]{Leslie Hogben}
\affiliation[inst2]{organization={American Institute of Mathematics},%Department and Organization
            addressline={Caltech 8-32; 1200 E California Blvd.},
            city={Pasadena},
            postcode={91125}, 
            state={CA},
            country={USA}}
\affiliation[inst3]{organization={Department of Mathematics, Iowa State University},%Department and Organization
            city={Ames},
            postcode={50011}, 
            state={IA},
            country={USA}}
\affiliation[inst4]{organization={Department of Mathematics, Purdue University},%Department and Organization
            city={West Lafayette},
            postcode={47907}, 
            state={IN},
            country={USA}}
\author[inst5,cor1]{Franklin H. J. Kenter\corref{cor1}}
\ead{kenter@usna.edu}
\affiliation[inst5]{organization={Mathematics Department, United States Naval Academy},%Department and Organization
            city={Annapolis},
            postcode={21402}, 
            state={MD},
            country={USA}
}
\author[inst6]{Seyed Ahmad Mojallal}
\affiliation[inst6]{organization={Department of Mathematics and Statistics, University of Regina},%Department and Organization
            city={Regina},
            postcode={S4S0A2}, 
            state={SK},
            country={Canada}}
\author[inst7]{Houston Schuerger}
\affiliation[inst7]{organization={Department of Mathematics, The University of Texas Permian Basin},%Department and Organization
            city={Odessa},
            postcode={79762}, 
            state={TX},
            country={USA}}
\begin{document}
%\linenumbers

\begin{abstract} 
 The fort hypergraph, the  fort number, and the fractional zero forcing number are introduced.  The fort number and the fractional zero forcing number are determined for well-known  graph families and Vizing-like lower bounds are established for these  parameters.    Results on hypergraph transversals and matchings and their fractional versions are applied to the zero forcing number,  fort number, and fractional zero forcing number, establishing a Vizing-like lower bound for the zero forcing number of a Cartesian product of graphs for certain families of graphs. A family of graphs achieving this lower bound is exhibited.

\end{abstract}

\begin{keyword}
%% keywords here, in the form: keyword \sep keyword
zero forcing \sep graph products \sep linear programming \sep fractional graph theory \sep minimum rank problems
%% PACS codes here, in the form: \PACS code \sep code
%\PACS 0000 \sep 1111
%% MSC codes here, in the form: \MSC code \sep code
%% or \MSC[2008] code \sep code (2000 is the default)
\MSC[2020] 05C57 \sep 05C50 \sep 05C65 \sep 05C72 \sep 90C05 \sep 90C35
\end{keyword}

\maketitle

%{\color{olive} \bf Please edit this version. This version is for revision to submit!}

{\section{Introduction}

In this work, %[work? article? paper? not `study']}
 we establish 
%[not investigate} 
lower bounds for the zero forcing number of the Cartesian product of graphs
under certain conditions. \emph{Zero forcing} is a process on a graph, where vertices are either filled or unfilled.
An initial set of filled vertices can force unfilled vertices to become filled by applying a color change rule. 
While there are many color change rules (see~\cite[Chapter 9]{HogLinShad}), we will use the \emph{(standard)  zero forcing color change rule} which states that a filled vertex $u$ can change an unfilled vertex $w$ to filled if $w$ is the only unfilled neighbor of $u$; this is referred to as \emph{$u$ forcing $w$} and repeated application of this color change rule is referred to as a zero forcing process.
Since the vertex set of a graph is finite, there comes a point in which no more forcings are possible.
If at this point all vertices of the graph $G$ are filled, then we say that the initial set of filled vertices is a \emph{zero forcing set} of $G$.
The \emph{zero forcing number} of $G$, denoted $\Z(G)$, is the minimum cardinality of a zero forcing set of $G$. 

 One of the original applications for zero forcing is to bound the maximum nullity
over all symmetric matrices associated with a graph.  More precisely, given a graph $G$ with $V(G)=\{1,2,\ldots,n\}$ and edge set $E(G)$,  the \emph{set of symmetric matrices associated with $G$} is defined by $\mathcal{S}(G) =$ $\left\{A=[a_{ij}]\in{S}_{n}(\mathbb{R})\colon\forall i\neq j, a_{ij}\neq 0 \Leftrightarrow \{i,j\}\in E(G)\right\}$  where ${S}_{n}(\mathbb{R})$ denotes the set of $n\times n$ real symmetric matrices. The \emph{maximum nullity} of $G$ is defined by $\M(G) = \max\left\{\nul{A}\colon A\in\mathcal{S}(G)\right\},$ where $\nul{A}$ denotes the nullity of $A$. 
The original study \cite{aim} showed that the zero forcing number is an upper bound for the maximum nullity: $M(G) \le \Z(G)$.
%\HS{Are we okay with the fact that we use both $\{u,v\}$ and $uv$ as notation for edges, or do we wish to choose just one of these?}
%\FK{Fixed it below and on \LaTeX line 1708. There is a concatination notation is used at \LaTeX lines 1147, 1148, 1153}
{The \emph{Cartesian product} of two graphs $G$ and $G'$, denoted $G \Box G'$, has vertex set $V(G) \times V(G')$, where $A\times B=\{(a,b):a\in A, b\in B\}$, and edge set $\{ (g_1,g'_1)(g_2,g'_2) ~\colon g'_1=g'_2 \mbox{ and } g_1g_2 \in E(G) \} \cup \{ (g_1,g'_1)(g_2,g'_2) ~\colon g_1=g_2 \mbox{ and } g'_1g'_2 \in E(G') \}$.
%\st{$\{(g_1,g'_1)(g_2,g'_2) ~\colon g_1g_2 \in E(G) \text{ xor }g'_1,g'_2 \in E(G') \} $}. 
%\FK{I like ``xor'' because it is more to the point, more compact, and avoids ambiguous order of operations with ``and'' versus ``or''}
That original study gave an upper bound for the zero forcing number of the Cartesian product of graphs \cite{aim}: %\cite[Proposition 2.5]{aim}:
$\Z(G \Box G') \le \min\{ \Z(G) |V(G’|, |V(G)| \Z(G') \}$. This upper bound is achieved simply by constructing a zero forcing set of $G \Box G'$ using the zero forcing sets of $G$ or $G'$. However, a {\it lower bound} is more elusive as proving a lower bound requires showing that no smaller subset of vertices can possibly be a zero forcing set.

To approach a lower bound,  one can appeal to similar bounds for the maximum nullity of a graph, as given by Hogben, Lin and Shader. 

\begin{theorem}{\rm \cite[Theorem 9.22,  Corollary 9.23]{HogLinShad}}\label{Thm1}
Let $G$ and $G'$ be graphs each of which has an edge. Then
\[ \Z(G\Box G')\ge M(G\Box G')\ge M(G)M(G')+1.\] 
If $\M(G)=\Z(G)$ and $\M(G')=\Z(G')$, then \[\Z(G\cp G')\ge \Z(G)\Z(G')+1.\] 
 All bounds are sharp.
\end{theorem}
%\FK{Disagree with adding $\Z()$ in theorem statement.}

It remains to determine whether the second bound for $\Z(\cdot)$ in Theorem \ref{Thm1} holds for all graphs. This question was originally asked by Hogben, Lin and Shader in \cite{HogLinShad} and we present it here as a formal conjecture.

\begin{conjecture}\label{con:main}
 If $G$ and $G'$ are graphs each containing an edge, then $\Z(G \Box G')\geq \Z(G) \Z(G')+1$.
\end{conjecture}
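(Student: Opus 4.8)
The plan is to work inside the \emph{fort hypergraph} $\F(G)$, whose vertices are those of $G$ and whose (hyper)edges are the forts of $G$. A set $B\subseteq V(G)$ is a zero forcing set if and only if it meets every fort: if $B$ is not forcing and $C$ is its stalled closure then $V(G)\setminus C$ is a fort avoiding $B$, while conversely any fort disjoint from $B$ survives the entire process. Thus $\Z(G)=\tau(\F(G))$, the transversal number; the fort number of Definition~\ref{fortnumber} is the matching number $\nu(\F(G))$; and the fractional zero forcing number is $\Z_f(G)=\tau^*(\F(G))=\nu^*(\F(G))$. The first step is to relate forts of $G\Box G'$ to forts of the factors: a direct check of the neighbor condition shows that if $F$ is a fort of $G$ and $F'$ a fort of $G'$ then $F\times F'$ is a fort of $G\Box G'$ (as are the lifts $F\times V(G')$ and $V(G)\times F'$). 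Hence the \emph{product hypergraph} $\F(G)\times\F(G')$, whose edges are the sets $F\times F'$, is a subhypergraph of $\F(G\Box G')$ on the full vertex set, so that $\Z(G\Box G')=\tau\bigl(\F(G\Box G')\bigr)\ge\tau\bigl(\F(G)\times\F(G')\bigr)$.

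The fractional relaxation behaves ideally under this product. Given optimal fractional fort matchings $y$ on $\F(G)$ and $y'$ on $\F(G')$, put $z_{F\times F'}=y_F\,y'_{F'}$ and $z=0$ on all non-product forts; then for every $(v,v')$ one has $\sum_{F\ni v,\;F'\ni v'}y_F y'_{F'}=\bigl(\sum_{F\ni v}y_F\bigr)\bigl(\sum_{F'\ni v'}y'_{F'}\bigr)\le 1$, so $z$ is a feasible fractional matching of $\F(G\Box G')$ of weight $\Z_f(G)\Z_f(G')$. Therefore $\Z_f(G\Box G')\ge\Z_f(G)\Z_f(G')$, and, taking products of families of pairwise disjoint forts, $\ft(G\Box G')\ge\ft(G)\ft(G')$. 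In particular the conjectured inequality (without the $+1$) follows whenever $\Z_f(G)=\Z(G)$ and $\Z_f(G')=\Z(G')$, or whenever $\ft(G)=\Z(G)$ and $\ft(G')=\Z(G')$. The $+1$ can be reinstated when $\M(G)\M(G')\ge\Z(G)\Z(G')-1$ by appealing to the maximum-nullity bound $\Z(G\Box G')\ge\M(G\Box G')\ge\M(G)\M(G')+1$ of Theorem~\ref{Thm1}, or by producing one extra fort disjoint from a maximum product family. For concrete families --- paths, cycles, complete and complete bipartite graphs, trees, and graphs of small zero forcing number --- I would instead analyze the forcing process on $G\Box G'$ directly, bounding the number of initially filled vertices required in each fiber; these are the cases I expect to close, and presumably the additional families established in Section~\ref{sec:cart_prod_bounds}.

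The main obstacle is that the transversal number of a product hypergraph is \emph{not} multiplicative, so product forts alone cannot yield the bound. For example, at the level of abstract hypergraphs, if $\mathcal A$ is the hypergraph of all $2$-subsets of a $3$-set then $\tau(\mathcal A)=2$, yet $\mathcal A\times\mathcal A$ lives on the $3\times 3$ grid and is covered by the three diagonal cells, so $\tau(\mathcal A\times\mathcal A)=3<4$. Consequently a minimum zero forcing set of $G\Box G'$ may genuinely exploit forts of $G\Box G'$ that are not of product (or lift) type, and there is no general structural description of these forts to leverage. The $+1$ is a second, independent difficulty: the fractional and nullity arguments deliver it only in the range $\M(G)\M(G')\ge\Z(G)\Z(G')-1$, and analogous $+1$ strengthenings fail for many related inequalities, so a uniform treatment would require an extremal argument ruling out equality $\Z(G\Box G')=\Z(G)\Z(G')$. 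For these reasons I would expect to obtain the conjecture only for broad but special families, leaving the general case open.
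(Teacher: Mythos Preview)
Your outline is honest about what it achieves: partial results toward the conjecture, not a proof.  Since the statement is a \emph{conjecture} that the paper also leaves open in general, the relevant comparison is between your partial results and the paper's.

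Your approach is in the same spirit as the paper's --- work in the fort hypergraph, use product forts $F\times F'$, and exploit fractional relaxations --- but it is strictly weaker in two respects.  First, you only obtain $\Z(G\Box G')\ge \Z(G)\Z(G')$ (without the $+1$) when \emph{both} factors satisfy $\Z_f=\Z$.  The paper does better: invoking Berge's inequality $\tau^*(H_1)\,\tau(H_2)\le \tau(H_1\times H_2)$ for products of hypergraphs (Theorem~\ref{thm.berge}), it gets $\Z^*(G')\,\Z(G)\le \tau(\F_G\times\F_{G'})\le \Z(G\Box G')$, so only \emph{one} factor need satisfy $\Z^*=\Z$.  Your own $3\times 3$ example is consistent with this: for $\mathcal A=\F_{K_3}$ one has $\tau^*(\mathcal A)=3/2$ and $\tau(\mathcal A)=2$, and indeed $\tau(\mathcal A\times\mathcal A)=3=\tau^*(\mathcal A)\,\tau(\mathcal A)$.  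You have all the ingredients for this step but did not invoke the mixed bound.

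Second, your treatment of the $+1$ relies on the nullity inequality $\M(G\Box G')\ge \M(G)\M(G')+1$ and hence only fires when $\M$ and $\Z$ are close.  The paper instead proves a self-contained combinatorial lemma (Lemma~\ref{lem.crossfortplusone}): if $B$ is any minimum transversal of $\F_G\times\F_{G'}$ and both graphs have an edge, then every vertex of $B$ in a nontrivial component has at least two neighbors outside $B$, so the complement of $B$ is a fort of $G\Box G'$ and $B$ alone cannot be a zero forcing set.  This yields $\Z(G\Box G')\ge \tau(\F_G\times\F_{G'})+1$ unconditionally, and combined with Berge's inequality gives the full conjecture whenever one factor has $\Z^*=\Z$ (Proposition~\ref{prop.zequalzstartrue}).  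Your remaining ideas --- analyzing specific families fiberwise --- are reasonable, but the paper's route via Lemma~\ref{lem.crossfortplusone} subsumes most of those cases with less case analysis.
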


If true, this bound would be sharp. We provide examples of large families of graphs that obtain equality in Section \ref{s:realize-lowerbd}.

 These types of bounds on graph products, known as Vizing-like bounds, are not uncommon in graph theory. Vizing conjectured that $\gamma(G \Box G') \ge \gamma(G) \gamma(G')$ where $\gamma(\cdot)$ is the domination number \cite{vizing1968some}.   Similar bounds have been found for other graph parameters, e.g., \cite{PPS12}.

% \HS{Stuff to be moved to bib: Vizing, V. G. (1968), "Some unsolved problems in graph theory", Uspekhi Mat. Nauk (in Russian), 23 (6): 117–134, Bibcode:1968RuMaS..23..125V, doi:10.1070/RM1968v023n06ABEH001252, MR 0240000.

% https://www.combinatorics.org/ojs/index.php/eljc/article/view/v12i1n12/pdf

% https://www.jstor.org/stable/43833740

% https://arxiv.org/abs/2209.03930}

 As a tool to study  Conjecture \ref{con:main}, we develop an alternative view of zero forcing using hypergraph transversals. A \emph{hypergraph transversal} (or edge cover) is a set of vertices that intersects every edge. We will let $\tau(H)$ denote the \emph{transversal number} of the hypergraph $H$, which is the  cardinality of the minimum transversal.  Brimkov, Fast and Hicks first gave this alternative characterization of zero forcing in the context of forts \cite{fortthm}. A {\it fort} is a nonempty subset of vertices whereby whenever its complement is filled, no more forces are possible. Specifically, a fort is a nonempty subset $F \subseteq V(G)$ such that for each $v \not \in F$, $\abs{N_G(v) \cap F}\neq 1$, where $N_G(v)$ or $N(v)$ denotes the open neighborhood of $v$.

\begin{theorem}[See \cite{fortthm} and \cite{fort}] \label{thm.fortchar}\label{fortsiffzfs}
A set of vertices $S \subseteq V(G)$ is a zero forcing set if and only if $S$ intersects every fort.
\end{theorem}

%\hs{We further expand the notion of forts by defining a new graph parameter called the fort number, denoted $\ft(\cdot)$, which is the maximum cardinality of a collection of disjoint forts in a given graph.}  
With this characterization of zero forcing, we focus on the \emph{hypergraph of forts}, denoted $\F_G$. (We will more formally define $\F_G$ in Section \ref {s:forthypergraph+fractional}). From this  perspective, Theorem \ref{thm.fortchar} can be reinterpreted as follows.

\begin{theorem} \label{thm.fortz} For any graph $G$,
\[ \Z(G) = \tau(\F_G). \]
\end{theorem}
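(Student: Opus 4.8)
The statement $\Z(G) = \tau(\F_G)$ asserts that the zero forcing number equals the transversal number of the hypergraph of minimal forts. The plan is to deduce this directly from Theorem~\ref{fortsiffzfs}, which characterizes zero forcing sets as exactly those vertex sets that meet every fort, together with the elementary observation that meeting every fort is equivalent to meeting every \emph{minimal} fort.

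First I would establish the key equivalence: a set $S \subseteq V(G)$ intersects every fort of $G$ if and only if $S$ intersects every minimal fort of $G$. One direction is immediate, since every minimal fort is a fort. For the converse, suppose $S$ meets every minimal fort, and let $F$ be an arbitrary fort. Since $V(G)$ is finite, $F$ contains (by downward passage through the finite poset of forts ordered by inclusion) some minimal fort $F' \subseteq F$; then $S \cap F' \neq \emptyset$ forces $S \cap F \neq \emptyset$. Hence the two conditions are equivalent. Combining this with Theorem~\ref{fortsiffzfs}, a set $S$ is a zero forcing set of $G$ if and only if $S$ is a transversal of the hypergraph $\F_G$ (whose edges are precisely the minimal forts).

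Next I would pass from sets to cardinalities. Taking the minimum cardinality over each of these two coinciding families of sets, the minimum over zero forcing sets is $\Z(G)$ by definition, and the minimum over transversals of $\F_G$ is $\tau(\F_G)$ by definition; therefore $\Z(G) = \tau(\F_G)$. One small point worth a remark is that $\F_G$ is a genuine (nonempty) hypergraph with $V(\F_G) = V(G)$ whenever $G$ has at least one vertex not in a zero forcing set situation; but in fact every graph has at least one minimal fort, since $V(G)$ itself is always a fort and the finite poset of forts has a minimal element, so $\tau(\F_G)$ is well defined. (If $G$ has an isolated vertex $v$, then $\{v\}$ is a fort and must be hit, consistent with $v$ lying in every zero forcing set.)

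There is essentially no obstacle here; the only thing requiring care is the reduction from all forts to minimal forts, which is exactly the point flagged in the paragraph preceding Definition~\ref{def-forthyp}. Everything else is bookkeeping: unwinding the definitions of $\Z(G)$ and $\tau(\F_G)$ and invoking Theorem~\ref{fortsiffzfs}. I expect the written proof to be only a few lines.
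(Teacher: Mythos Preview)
Your proposal is correct and matches the paper's approach exactly: the paper simply states that Theorem~\ref{fortsiffzfs} implies the result, having already noted in the paragraph before Definition~\ref{def-forthyp} that a transversal of the minimal forts is a transversal of all forts. Your write-up just makes explicit the one-line reduction (every fort contains a minimal fort) and the passage to minimum cardinalities, which is precisely what the paper leaves implicit.
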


This characterization allows the zero forcing number to be computed as the optimal value of the following binary integer program.  As presented in Model 2 in \cite{fortthm}, all forts are used, but the result is the same when only minimal forts are used, as in the next description.

\begin{mini!}
	{}{\sum_{v\in V(G)}x_{v}}{}{}\label{eq:trans-obj}
	\addConstraint{\sum_{v\in F}x_{v}}{\geq 1,~\quad\text{ for all minimal forts }  F}\label{eq:trans-const1}
    \addConstraint{x_{v}}{\in\{0,1\},~\quad\forall v\in V(G).}\label{eq:trans-const2}
 \end{mini!}

One advantage of this view, not yet taken before, is that it provides a meaningful %and {\red impactful} \lh{not a word} 
interpretation of a {\it fractional zero forcing number}. This is done by changing the constraint (\ref{eq:trans-const2}) from ${x_{v}} \in\{0,1\}$ to ${x_{v}}{\in [0,1]}$. We will denote the resulting optimal value of this relaxation as $\Z^*(G).$  Note that this fractional variation of zero forcing is different from a previous one introduced in [13] via a three-color forcing game, which was shown to be equal to the skew forcing number $\Z^{-}(G)$.  This distinction is illustrated in Example~\ref{ex:complete} where it is shown that $\Z^{*}(K_{n})=\frac n 2$, since $\Z^{-}(K_{n})=n-2$.

Returning to our main question, Conjecture \ref{con:main}, we leverage known results about Cartesian products of hypergraph transversals. Namely, we %show that $\Z(G \Box G') \ge \Z^*(G) \Z(G) where $\Z^*(\cdot)$ is the fractional zero forcing number as alluded above. We 
show in Theorem \ref{thm.zequalzstartrue} that  $\Z(G \Box G') \ge \Z(G) \Z(G') +1$ whenever $\Z(G) = \Z^*(G)$. As a result, we can provide an affirmative answer to Conjecture \ref{con:main} for many families of graphs (by showing that $\Z(G) = \Z^*(G)$ in those cases).
  It is interesting to compare Theorem \ref{thm.zequalzstartrue} to Theorem \ref{Thm1}:  While more graphs $G'$ are known for which $\Z(G') = \M(G')$ than for which $\Z(G') = \Z^*(G')$, once such a graph $G'$ is found it has wider application to Cartesian products: By Theorem \ref{thm.zequalzstartrue}, the bound $\Z(G \Box G') \ge \Z(G)\Z(G') + 1$ applies without restriction on $G$, whereas to obtain $\Z(G \Box G') \ge \Z(G)\Z(G') + 1$ from Theorem \ref{Thm1} requires $\Z(G)=\M(G)$ in addition to $\Z(G')=\M(G')$.  %(See Fig 1. in ~\cite{hogben2012spreadzeroforcing}). 
For $G=C_5\circ K_1$ (called the pentasun),  it is well known that that $Z(G)=3>2=M(G)$.  So if $G'$ is any graph with $\Z(G') = \Z^*(G')$ (examples of such graphs are listed  in Table \ref{table2}), then Theorem \ref{thm.zequalzstartrue} confirms the bound in Conjecture \ref{con:main}, while Corollary \ref{Thm1} does not.
%\HS{Is this possibly a better place to compare the $\M(G)\M(G')$ bound to the $\Z(G)\Z(G')$ bound?  Specifically, that the latter covers many cases the former does not, since requirements are only placed on one of the two graphs?} 

While applying hypergraph results, we also prove the corresponding Vizing-like bounds for all graphs $G$ and $G'$ for the parameters $\Z^*$ and  the additional new parameter $\ft$,    which is the maximum cardinality of a collection of disjoint forts in a given graph and is called the \emph{fort number}. Specifically,   in Proposition \ref{prop:cart_tens_prod_fractional} and Corollary \ref{boxfort} we show that for each pair of graphs $G$ and $G'$,
\[\ft(G \Box G') \ge \ft(G)\ft(G') \text{ and }\Z^*(G\Box G')\ge\Z^*(G)\Z^*(G').\]
%From there, we focus on finding classes of graphs for which $\Z(G) = \Z^*(G)$.

 Additional contributions of this study include the following:

\begin{itemize}

\item A determination of the exact values of these parameters, including $\Z^*$, for many families of graphs including complete graphs, complete bipartite graphs,  cycles, hypercubes, select coronas and more (see Section \ref{s:fam+ext}; these results are summarized in Table \ref{t:ftzstarzvalues}). 

\item A full characterization  when $\Z^*(G) = \Z(G)$ for trees (see Theorem \ref{thm:LL}).

\item A result that equality within Conjecture \ref{con:main} is achieved whenever $G$ is a star-clique path (see Section \ref{s:realize-lowerbd}).
  
\item   Numerous interesting open questions covering many different aspects of this study (summarized in Section \ref{subsec:open} and Table \ref{tab:openquestions}).
\end{itemize}

\section{Forts,  hypergraph transversals, and fractional zero forcing} \label{s:forthypergraph+fractional}
In this section we more carefully define the fort hypergraph and use it to define a (new) fractional zero forcing number and fractional fort number, which are equal.
These ideas are then used in Section \ref{sec:cart_prod_bounds} to leverage known results on hypergraph matchings and hypergraph transversals. 
    
Let $H=(V(H),E(H))$ be a {hypergraph}, where $V(H)$ is a finite nonempty set (called the set of vertices) and the set of edges $E(H)$ is a set of nonempty subsets of $V(H)$. As in~\cite{BergeHypergraphs}, we require that $V(H)=\cup_{e\in E(H)} e$.  The \emph{degree} of a vertex in a hypergraph is the number of edges that it is an element of. A set of vertices that intersects every edge of $H$ is a \emph{transversal} of $H$.
The \emph{transversal number} of  $H$, denoted by $\tau(H)$,
is the minimum cardinality of a transversal of $H$. 
A hypergraph is \emph{simple} if no edge is a proper subset of another edge,  \emph{uniform} if every edge has the same number of vertices, and \emph{regular} if every vertex is in the same number of edges.  

Throughout, we will consider the  hypergraph of minimal forts.
One might think it would be natural to consider the hypergraph of all forts rather than the hypergraph of minimal forts. 
Such a fort hypergraph is also a reasonable object of study. 
However, a transversal of all minimal forts is also a transversal of all forts (because every fort contains a minimal fort) and it is convenient to consider only minimal forts.  
Furthermore, by restricting consideration to minimal forts, a simple hypergraph is obtained.

 \begin{definition}\label{def-forthyp}
    Let $G$ be a graph. The \emph{hypergraph of minimal forts} or \emph{fort hypergraph} of $G$, denoted $\F_G$, is the hypergraph with  $E(\F_G) = \{ F : F \text{ is a minimal fort of } G\}$ and $V(\F_G) = \cup_{F\in E(\F_G)} F$.
\end{definition}
Note that a vertex of $G$ that is not in any minimal fort of $G$ will not be a vertex of $\F_G$. However, if $v$ is an isolated vertex of $G$, then $\{v\}$ is a minimal fort of $G$, so $v\in V(\F_G)$. A fort hypergraph is simple but need not be uniform nor regular.

For convenience, we may abuse notation and use $\F_G$ to refer to either the set of minimal forts of $G$ or the hypergraph of minimal forts of $G$. In all cases, the context will be clear.
As mentioned in Theorem \ref{thm.fortchar}, the strong relationship between zero forcing and hypergraph transversals is that $\Z(G) = \tau(\F_G)$.

% Theorem \ref{fortsiffzfs} implies the next result.
% \begin{theorem} \label{thm.fortz} For any graph $G$,
% \[ \Z(G) = \tau(\F_G). \]
% \end{theorem}

 %We note that the problem of finding a minimal transversal is sometimes referred to as the {\it edge cover problem} and hence a transversal is sometimes called  a ``cover’’.
    Given a hypergraph $H$, a set of disjoint edges is a  \emph{matching} of $H$. A \emph{maximum matching} is a matching of maximum cardinality.
   The \emph{matching number} of a hypergraph $H$, denoted here by $\mu(H)$
is the number of edges in a maximum matching.\footnote{The matching number of $H$ is often denoted by $\nu(H)$ in the literature.} 

\begin{observation} \label{obs.fteqmatch} For any graph $G$,
\[\ft(G) = \mu(\F_{G}).\]
\end{observation}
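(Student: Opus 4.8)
The plan is to unwind the definitions on both sides and observe that they describe the same combinatorial object. By Definition~\ref{fortnumber}, $\ft(G)$ is the largest size of a collection of pairwise disjoint forts of $G$. By Definition~\ref{def-forthyp}, $\mu(\F_G)$ is the largest size of a collection of pairwise disjoint \emph{minimal} forts of $G$ (viewing the edges of $\F_G$ as minimal forts). So the two quantities differ only in that the first maximizes over arbitrary forts while the second restricts to minimal ones. The content of the proof is the observation that this restriction costs nothing.

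First I would prove the inequality $\mu(\F_G)\le\ft(G)$: any matching of $\F_G$ is in particular a collection of disjoint forts of $G$ (every minimal fort is a fort), so it is a feasible candidate in the maximization defining $\ft(G)$. Next I would prove $\ft(G)\le\mu(\F_G)$: start with a collection $\mathcal{F}=\{F_1,\dots,F_k\}$ of pairwise disjoint forts witnessing $\ft(G)=k$, and for each $i$ choose a minimal fort $F_i'\subseteq F_i$ (such a minimal fort exists because $F_i$ is a nonempty finite set containing at least one fort, namely itself, so one can repeatedly delete vertices while the fort property is preserved, terminating at a minimal fort). Since $F_i'\subseteq F_i$ and the $F_i$ are pairwise disjoint, the $F_i'$ are also pairwise disjoint, and each $F_i'$ is an edge of $\F_G$. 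Hence $\{F_1',\dots,F_k'\}$ is a matching of $\F_G$ of size $k$, giving $\mu(\F_G)\ge k=\ft(G)$. Combining the two inequalities yields the claim.

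I do not expect any genuine obstacle here; the only point requiring a word of care is the existence of a minimal fort inside every fort, which is immediate from finiteness of $V(G)$ (this is exactly the fact already invoked in the discussion preceding Definition~\ref{def-forthyp}, that ``every fort contains a minimal fort''). If desired, the same two-inequality argument simultaneously reproves $\ft(G)\le\Z(G)$ via Theorem~\ref{thm.fortz}, since a matching and a transversal of the same hypergraph satisfy $\mu(\F_G)\le\tau(\F_G)$, but that is not needed for the statement at hand.
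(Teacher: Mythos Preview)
Your proposal is correct and matches the paper's treatment: the paper states this as an \emph{Observation} with no written proof, relying implicitly on exactly the two points you spell out --- that minimal forts are forts, and that every fort contains a minimal fort (the latter is noted just before Definition~\ref{def-forthyp}). Your argument simply makes explicit what the paper leaves to the reader.
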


For any hypergraph $H$, $\mu(H) \le \tau(H)$ (as one will need at least one vertex from each edge in the matching to cover those edges). In the context of zero forcing,
$\mu(\F_G) \le \tau(\F_G)$ 
translates to 
$\ft(G)\le\Z(G)$.

In analogy with equations \eqref{eq:trans-obj} -- \eqref{eq:trans-const2}, the matching problem for a hypergraph $H$ can be formulated as follows:
 \begin{maxi!}
    {}{\sum_{e\in E(H)}x_{e}}{}{}\label{eq:match-obj}
    \addConstraint{\sum_{e\in E(H)\colon v\in e}x_{e}}{\leq 1,~\quad\forall v\in V(H)}\label{eq:match-const1}
    \addConstraint{x_{e}}{\in\{0,1\},~\quad\forall e\in E(H).}\label{eq:match-constr2}
 \end{maxi!}

 The fractional transversal number and fractional matching number are discussed in~\cite[Chapter 3]{BergeHypergraphs} and~\cite[Chapter 1]{Scheinerman2008}. The fractional transversal problem for a hypergraph $H$ can be formulated as a linear program obtained by relaxing constraint~\eqref{eq:trans-const2} to $x_{v}\in[0,1]$, for all $v\in V(H)$.  
The fractional matching problem for a hypergraph $H$ is the dual linear program obtained by relaxing constraint~\eqref{eq:match-constr2} to $x_{e}\in[0,1]$, for all $e\in E(H)$.   An assignment of values  $x_v\in [0,1]$ to vertices is called a \emph{weighting of vertices} and an assignment of values  $x_F\in [0,1]$ to forts (edges of the fort hypergraph) is called a \emph{weighting of forts}.

From a graph-theoretical sense, the transversal problem seeks a set of vertices of minimum cardinality that intersect every edge.
Whereas, the fractional-transversal problem seeks to assign a minimum total weight to the vertices such that each edge has weight at least $1$.
Similarly, the matching problem seeks a maximum collection of disjoint edges.
Whereas, the fractional-matching problem seeks to assign a maximum total weight to the edges such that the edges containing a single vertex have a combined weight of at most $1$.

Given a hypergraph $H$, we denote its fractional transversal number by $\tau^{*}(H)$ and its fractional matching number by $\mu^{*}(H)$.
From here, we can define the fractional zero forcing number and fractional fort number of a graph.

\begin{definition}\label{d:fracZ}
For a graph $G$, the \emph{fractional zero forcing number} is defined by
\[
\Z^{*}(G) = \tau^{*}(\F_G)
\]
or
\[ 
\Z^{*}(G) = \min
\left\{\sum_{v\in V(G)}x_{v}\colon\sum_{v\in F} x_{v}\geq 1~\forall F\in\F_G \text{ and } x_{v}\geq 0~ \forall v\in V(G)\right\}
\]
where $\F_G$ is the hypergraph or set of minimal forts,   respectively.
\end{definition}

\begin{definition}\label{d:fracft}
For a graph $G$, the \emph{fractional fort number} is defined by
\[
\ft^{*}(G) = \mu^{*}(\F_{G})
\]
or
\[
\ft^{*}(G) = \max\left\{\sum_{F\in\F_{G}}x_{F}\colon\sum_{F\in\F_{G}\colon v\in F}x_{F}\leq 1,~\forall v\in V(G)\text{ and } x_{F}\geq 0~\forall F\in\F_{G}\right\}
\]
where $\F_G$ is the hypergraph or set of minimal forts, respectively.
\end{definition}

\begin{remark}
Since the relaxations of the integer programs in~\eqref{eq:trans-obj}--\eqref{eq:trans-const2} and~\eqref{eq:match-obj}--\eqref{eq:match-constr2} are dual linear programs, the duality theorem of linear programming implies that $\tau^{*}(H) = \mu^{*}(H)$~\cite[Corollary 7.1g]{schrijver}. 
Thus for all graphs $G$, we have the following:
\begin{equation}\label{eq:ftz-ineq}
\ft(G)\leq\ft^{*}(G)=\Z^{*}(G)\leq\Z(G).
\end{equation}

\end{remark}

 It  is sometimes convenient to  denote a weighting of vertices $v\to x_v$ by a \emph{weight function} $\omega:V(G)\to [0,1]$ with $\omega(v)=x_v$.  We say a weight function $\omega$ is \emph{valid} for $G$ if it satisfies the constraint 
$\sum_{v\in F} x_{v}\geq 1$ for all $F\in\F_G$.  An \emph{optimal} weight function is a valid weight function $\omega$ such that $\sum_{v\in V(G)} \omega(v)=\Z^*(G)$. 

%Note that $\Z^{*}(G)$ is distinct from the ``fractional zero forcing number'' defined in~\cite{hogben2016fractional} via a three-color forcing game, which was shown to be equal to the skew forcing number $\Z^{-}(G)$.  %This distinction is illustrated in Example~\ref{ex:complete} since $\Z^{-}(K_{n})=n-2$ and $\Z^{*}(K_{n})=n/2$.
% Computations for fort number and fractional zero forcing number for $K_n$ and additional families of graphs are presented in the next section.  %\lh{Maybe we used wrong def of fractional in that paper [13]? they are supposed to be the same.  Removed duplicate mention in intro.  If preferred to say this in intro, then remove here.}

{Most variations of zero forcing are introduced using an alternative color-change rule. However, a natural color-change rule for $\Z^*(G)$ appears elusive. Hence, we ask the following open question.

\begin{question} \label{quest:ccr_zstar}
Is there a color-change rule that can be used to compute $\Z^*$?
\end{question}
}

%%%%%%%%%%%%%%%%%%%%%%%%%%%

\section{Fort number and fractional zero forcing number of families of graphs and extreme values}\label{s:fam+ext}
We begin  this section by determining the  fort number and fractional zero forcing number of several families of graphs, including (but not limited to) paths, cycles, complete graphs, and complete bipartite graphs; these results are summarized in Table \ref{t:ftzstarzvalues}.   We also introduce some tools for computing fractional zero forcing number. Finally, we examine graphs with the smallest and largest possible fractional zero forcing numbers (among connected graphs of fixed order).
\begin{example}\label{ex:path}Let $P_{n}$ denote a path graph of order $n$. Since $\Z(P_{n})=1$  (and $\ft(G)\ge 1$ for every graph $G$), it follows that\[\ft(P_{n}) = \Z^{*}(P_{n}) =\Z(P_{n}) = 1.\]  For $n\ge 4$, $\F_{P_n}$  is never regular (since each of the two leaves is in every fort but other vertices are not). % but $\F_{P_n}$ is uniform because every minimal fort has $\lc \frac{n+1}2\rc$ vertices. 
\end{example}

The next lemma is useful for graphs that have forts with cardinality two, including complete graphs.
 In a graph $G$, we say that vertices $u$ and $w$ are \emph{twins} if $v\in N_G(u)$ if and only if $v\in N_G(w)$ for $v\ne u,w$.

\begin{lemma}\label{l:F2}
Let $G$ be a graph, then the following hold:
   \begin{itemize} 
   \item $F_0=\{u,w\}$ is a fort of $G$ if and only if $u$ and $w$ are twins.   
   \item If  $u$ and $w$ %\hs{$w$\st{$v$}} 
   are twins and $F$ is a fort of $G$ such that  $u\in F$ and $w\not\in F$, then $F\setminus\{u\}\cup\{w\}$ is also a fort of $G$. %If $F$ is a fort of $G$ such that $u$ and $v$ are twins with  $u\in F$ and $w\not\in F$, then $F\setminus\{u\}\cup\{w\}$ is also a fort of $G$.
    \item Whenever $u$ and $w$ are twins and there is a fort $F$ with $u\in F$ but $w\not\in F$, every optimal weight function has $\omega(u)=\omega(w)= \frac 1 2$.
     \end{itemize}
%   Let $G$ be a graph,  $u,w\in V(G)$, and let $F_0=\{u,w\}$.  Then  $F_0$ is a fort of $G$ if and only if $u$ and $w$ are twins.   If $u$ and $w$ are twins,  $F$ is a fort of $G$, $u\in F$, and $w\not\in F$, then $F\setminus\{u\}\cup\{w\}$ is also a fort of $G$.    Furthermore, $\omega(u)=\omega(w)= \frac 1 2$ for any optimal weight function $\omega$.
\end{lemma}

\begin{proof} 
Observe that $F_0=\{u,w\}$ is a fort of $G$ if and only if $ v\in N(u) \Leftrightarrow v\in N(w)$ for every $v\ne u,w$ if and only if $u$ and $w$ are twins.  

Now assume $u$ and $w$ are twins, $F$ is a fort of $G$, $u\in F$, and $w\not\in F$. Let $F'=F\setminus\{u\}\cup\{w\}$ and let $v\in V(G)\setminus{F'}$.
If $v\neq u$, then $\abs{N(v)\cap F'} = \abs{N(v)\cap F} \neq 1$. If $v=u$, then $\abs{N(v)\cap F'} = \abs{N(w)\cap F} \neq 1$. Therefore, $F'$ is a fort of $G$. 
Since $F_0$ is a fort, $\omega(u)+\omega(w)\geq 1$.  Since $F$ and $F'$ are both forts of $G$ it follows that $\omega(u)=\omega(w)=1/2$ for any optimal weight function $\omega$. \end{proof}

    If every minimal fort has exactly 2 vertices, then  $\F_G$ is a 2-uniform hypergraph and thus is a graph, as is the case in the next example.  %\textcolor{red}{(I understand the comparison here, but am not a fan of the equals sign being used in an English sentence for a journal article.  Perhaps ``...then $\F_G$ is a 2-uniform hypergraph and thus a graph, as is the case...'')-Houston}

\begin{example}\label{ex:complete}
Let $K_{n}$ denote the complete graph of order $n$, where $n\geq 2$.
Recall that $\Z(K_{n}) = n-1$.
Also, observe that the minimal forts of $K_{n}$ are made up of any pair of two vertices. 
Therefore, $\ft(K_{n})=\lf\frac{n}{2}\rf$.
Furthermore, Lemma~\ref{l:F2} implies that an optimal weight function has $\omega(u)=1/2$ for all vertices $u\in V(K_{n})$.
Thus, $\Z^{*}(K_{n}) =  \frac n 2$. Observe that the fort hypergraph of the complete graph is  a graph and $\F_{K_n}\cong K_n$.
\end{example}

\begin{example}\label{ex:Kpq}
Consider the complete bipartite graph $K_{p,q}$ where we have a partition of the vertices $V_{1},V_{2}$ with $\abs{V_{1}}=p$, $\abs{V_{2}}=q$, and $uv\in E$ if and only if $u\in V_{1}$ and $v\in V_{2}$.  

Assume first that  $p,q\geq 2$.
Every pair of vertices in $V_{1}$ (or pair in $V_{2}$) constitutes a fort of $K_{p,q}$.
  No set of one vertex from each partite set is a fort.  Therefore, $\ft(K_{p,q}) = \lfloor\frac{p}{2}\rfloor + \lfloor\frac{q}{2}\rfloor$.
Furthermore, since any  set of $2$ vertices with both in $V_1$ or both in $V_2$ is a fort of $K_{p,q}$, Lemma~\ref{l:F2} implies that the optimal weight function has $\omega(u)=1/2$ for all vertices $u\in V(K_{p,q})$.
Thus $\Z^{*}(K_{p,q}) = \frac{p+q}{2}$.

For $p=1$ and $q\ge 2$, the formula for fort number  remains valid: $\ft(K_{1,q})=\lf \frac{q}2\rf$. 
However, $\Z^*(K_{1,q})= \frac{q}2$, because the only minimal forts are pairs of leaves.  
In both cases, the fort hypergraph is 2-uniform, and hence is a graph  (that is disconnected when $p,q\ge 2$). When $p=q$ or $p=1$, then the fort hypergraph is regular.
\end{example}

%Computations for fort number and fractional zero forcing number for additional families of graphs are presented in the next section.

  \begin{observation}\label{o:disjoint}
    If every pair of distinct minimal forts of $G$ is disjoint, then $\ft(G)=\Z^*(G)=\Z(G)$.
\end{observation}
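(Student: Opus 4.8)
The plan is to prove the chain $\ft(G) \le \Z^*(G) \le \Z(G)$ always holds (this is \eqref{eq:ftz-ineq}), so it suffices to show that the hypothesis forces $\ft(G) \ge \Z(G)$, which collapses the chain. Equivalently, using Observation~\ref{obs.fteqmatch} and Theorem~\ref{thm.fortz}, I want to show $\mu(\F_G) = \tau(\F_G)$ under the assumption that the edges of $\F_G$ are pairwise disjoint.

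First I would observe that if every two distinct minimal forts of $G$ are disjoint, then the edge set $E(\F_G)$ is literally a partition of $V(\F_G)$ (recall $V(\F_G) = \bigcup_{F \in E(\F_G)} F$, and the blocks are nonempty and pairwise disjoint). In particular the collection of \emph{all} minimal forts is itself a matching, so $\mu(\F_G) = \abs{E(\F_G)}$. On the other hand, any transversal must contain at least one vertex from each of these $\abs{E(\F_G)}$ pairwise disjoint edges, so $\tau(\F_G) \ge \abs{E(\F_G)}$; conversely picking one vertex from each block gives a transversal of that size, so $\tau(\F_G) = \abs{E(\F_G)}$ as well. Hence $\ft(G) = \mu(\F_G) = \tau(\F_G) = \Z(G)$, and combining with \eqref{eq:ftz-ineq} squeezes $\Z^*(G)$ in between.

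Alternatively, and perhaps more transparently for the reader, I would phrase it purely in the forcing language: by Theorem~\ref{fortsiffzfs} a zero forcing set must meet every fort, hence every minimal fort; since the minimal forts are pairwise disjoint, a zero forcing set needs at least $\abs{E(\F_G)}$ vertices, and selecting exactly one vertex from each minimal fort yields a set that meets every minimal fort, hence (again by Theorem~\ref{fortsiffzfs}, since every fort contains a minimal fort) a zero forcing set; so $\Z(G) = \abs{E(\F_G)}$. The same $\abs{E(\F_G)}$ pairwise disjoint minimal forts form a collection witnessing $\ft(G) \ge \abs{E(\F_G)}$, and $\ft(G) \le \Z(G)$ always, so $\ft(G) = \Z(G)$.

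I do not anticipate a genuine obstacle here; the only point requiring a little care is the edge case where $V(\F_G) \subsetneq V(G)$ (some vertex of $G$ lies in no minimal fort) — but this does not affect the counting argument, since a minimum transversal of $\F_G$ need only use vertices of $V(\F_G)$, and the definition $\Z(G) = \tau(\F_G)$ from Theorem~\ref{thm.fortz} already accounts for this. One should also note the degenerate possibility that $G$ has no edge or is such that $\F_G$ has a single edge, but the argument goes through verbatim in those cases. I would write the proof in two or three sentences following the second (forcing-language) route, citing Theorem~\ref{fortsiffzfs}, Definition~\ref{fortnumber}, and \eqref{eq:ftz-ineq}.
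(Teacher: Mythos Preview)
Your argument is correct. The paper states this as an Observation without proof, so there is no ``paper's own proof'' to compare against; the justification you give --- that the pairwise-disjoint minimal forts simultaneously form a maximum matching and determine a minimum transversal of $\F_G$, forcing $\mu(\F_G)=\tau(\F_G)$ and hence collapsing the chain in~\eqref{eq:ftz-ineq} --- is exactly the intended one-line reasoning behind the observation.
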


\begin{example}\label{ex:empty}
Let $\overline{K_n}$ denote the empty graph of order $n$.  Each set consisting of a single vertex is a minimal fort and $\F_{\overline{K_{n}}}$ is 1-uniform and 1-regular. % and $\F_{\overline{K_{n}}}\cong \overline{K_{n}}$. 
From 
Observation \ref{o:disjoint},
$\ft(\overline{K_{n}}) = \Z^{*}(\overline{K_{n}}) =  \Z(\overline{K_{n}})=n$. %\Z(\overline{K_{n}}) = n.
\end{example}

The next remarks provide upper and lower bounds on the fractional zero forcing number that are used in determining its value for various families.

\begin{remark}\label{r:nover2} 
    Suppose that $G$ is a connected graph of order $n\ge 2$. Then each fort of $G$ must contain at least two vertices. Hence, we can weight each vertex $\frac 1 2$ to cover each fort with weight at least $1$, and it follows that $\Z^{*}(G)\leq \frac n 2$.
Furthermore, by Example~\eqref{ex:complete}, this bound is sharp.

This bound can be improved when all forts are larger or some vertices are not in any fort: Let  $\varphi$ be the minimum cardinality of a fort and let $|V(\F_{G})|=n'$.   We see that  $\Z^*(G)\le \frac{n'}{\varphi}$ by considering the valid weight function $\omega(v)=\frac{1}{\varphi}$ for each vertex $v$ in  $V(\F_{G})$ and $\omega(v)=0$ for $v\not\in V(\F_{G})$. 
\end{remark}

We can obtain a lower bound on the fractional fort number even when only some minimal forts of a graph are known.  %A useful property of the fractional fort number is that when only some minimal forts of a graph are known we can obtain a lower bound on the fractional fort number. 
\begin{remark}\label{p:symm-fort-ratio}
Let $S$ %=\{F_1,\dots,F_t\}$ 
be a set of $s$  minimal forts of $G$, and let $d$ denote the largest number of forts in $S$ that contain any one vertex of $G$ (so $d$ is the maximum degree of a vertex in the subhypergraph with edges $S$).  We see that $\frac s d\le \ft^*(G)=\Z^*(G)$ by weighting each fort in $S$ as $\frac 1 d$ and every other minimal fort weighted zero.
In particular, if $m=m(\F_G)$ is the number of edges in the fort hypergraph $\F_G$ and $\Delta=\Delta(\F_G)$ is the maximum vertex degree in $\F_G$, then $\frac{m}{\Delta}\le \ft^*(G)$.   
\end{remark}

\begin{remark}\label{Z*bds}
By the two previous remarks  (and with the notation used there), 
         \[\frac{m}{\Delta} \le \ft^*(G)=\Z^*(G) \le \frac{n'}{\varphi}.\]      If  $\frac{m}{\Delta}=\frac{n'}{\varphi}$ for a graph $G$, then $\ft^*(G)=\Z^*(G)=\frac{n'}{\varphi}$. In particular, if the fort hypergraph of $G$ is both $\varphi$-uniform and $\Delta$-regular, then there are $n'\Delta = \varphi m$ edge-to-vertex incidences and thus $\ft^*(G)=\Z^*(G)=\frac{n'}{\varphi}=\frac m\Delta$.
\end{remark}

\begin{example}\label{ex:cycle}
Let $C_{n}$ denote a cycle of order $n$, where $n\geq 3$. Recall that $\Z(C_{n})=2$. Number the vertices of $C_n$ as $0,1,\dots,n-1$ in cycle order and perform arithmetic modulo $n$.
%We cover odd cycles here and we cover even cycles in Example~\ref{ex:even_cycle}

First consider the case $n=2k$.  Notice that the disjoint sets $F_e=\{0,2,\dots, 2k-2\}$ and $F_o=\{1,3,\dots, 2k-1\}$ are each forts. Hence, 
$
\ft(C_{2k})=\Z^{*}(C_{2k})=\Z(C_{2k})=2.
$

 Now let $n=2k+1$.  Each fort contains at least $k+1$ vertices, so
$\Z^{*}(C_{2k+1})\leq\frac{2k+1}{k+1}$ by Remark \ref{r:nover2}.  The sets  of the form $F_\ell=\{\ell,\ell+2,\dots, \ell+2k=\ell-1\}$ are all  minimal forts.  There are $2k+1$ such forts and each vertex appears in $k+1$ forts.
Thus,   $\frac{2k+1}{k+1}\le \Z^*(C_{2k+1})$ by Remark \ref{p:symm-fort-ratio}. % using the  forts $F_\ell$  because  each $v\in V(C_{2k+1})$ is in exactly $k+1$ such forts.  
So $\Z^{*}(C_{2k+1})=\frac{2k+1}{k+1}<2=\Z(C_{2k+1})$.  Note also that $\ft(C_{2k+1})=1$  because every fort contains at least $k+1$ vertices.
\end{example}

The next  two examples make use of coronas of graphs. Let $G$ and $G'$ be graphs.
The \emph{corona} of $G$ with $G'$, denoted $G\circ G'$, is obtained by taking one copy of $G$ and $\abs{V(G)}$ copies of $G'$ and joining the $i$th vertex of $G$ to every vertex in the $i$th copy of $G'$.

\begin{example}\label{ex:double_foliation}
Let $G'$ be a graph of order $r$ and let $G=G'\circ 2K_1$.   %$\ft(G)\ge r$ since 
Every  set of two leaves adjacent to the same vertex of $G'$ is a minimal fort and $\Z(G)\le r$ since choosing one leaf from each of these minimal forts is a zero forcing set.
Every minimal fort is a set of two leaves adjacent to the same vertex of $G'$ because if a fort $F$ contains a vertex $u\in V(G')$, then $F$ contains the two leaf neighbors of $u$, and so is not minimal.
Thus the minimal forts are disjoint and $
\ft(G)=\Z(G)$.  
\end{example}

%\FK{@HS, we are awaiting your extention of the above example as a Proposition.}

\begin{example}\label{ex:triangles} 
Let $G'$ be a graph of order $r\ge 2$, $G=G'\circ K_2$, and $W=V(G)\setminus V(G')$.
Then $r \le \ft(G)$ since each pair of adjacent vertices in $W$ is a fort. 
Since every fort of $G$ contains at least two vertices of $W$, weighting every vertex of $W$ as $\frac 1 2$ and the vertices of $G'$ weighted zero gives $\Z^*(G)\le r$. 
Thus $\ft(G)=\Z^*(G)=r$. 
\end{example}

  The next result is stated  in \cite{ZGcircKs} for connected graphs of any order, so does not exclude $G''= K_1$, but it is not correct when $G''=K_1$ (e.g., $\Z(P_{4r}\circ K_1)=2r$ is a counterexample).  We present a proof that covers the graphs in Example \ref{ex:triangles}. %; it is clear in this proof how $H\ne K_1$ is used.
The notation $v\to w$ is widely used to indicate $v$ forces $w$ and we use that notation here.  In addition, given $U \subseteq V(G)$ we use the notation $G[U]$ to refer to the subgraph with vertex set $V(G[U])=U$ and edge set $E(G[U])=\{uv \in E(G): u,v \in U\}$ and call this the subgraph of $G$ induced by $U$, or more simply an induced subgraph.

\begin{proposition}\label{p:Zcorona} 
   Let $G'$ and $G''$ be graphs of orders $q$ and $r\ge 2$, respectively, such that $G''$ has no isolated vertices.  Then $\Z(G'\circ G'')=q\Z(G'')+\Z(G')$. 
\end{proposition}
\bpf Let $G=G'\circ G''$, let $V(G')=\{u_1,\dots,u_{q}\}$, let $G''_i$ denote the copy of $G''$ joined to $u_i$,  let $V(G''_i)=\{x^i_1,\dots,x^i_r\}$, and let $\hat G_i=G[V(G''_i)\cup\{u_i\}]$.  
First, choose zero forcing sets $S'$ for $G'$ and $S_i$ for $G''_i$.  Then $S'\cup\bigcup_{i=1}^r S_i$ is a zero forcing set of $G$, alternating forces in $G''_i$ that have $u_i$ filled with forces in $G'$ by $u_i$ such that all vertices of $G''_i$ are filled.  Thus $\Z(G)\le q\Z(G'')+\Z(G')$.

Suppose $S$ is a minimum zero forcing set of $G$. 
Define $S'=S\cap V(G’)$ and $\hat S_i=S\cap V(\hat G_i)$.    By \cite[Proposition 9.16]{HogLinShad}, $\Z(\hat G_i)=\Z(G''_i)+1$.  Since at most one vertex of $\hat G_i$ can be forced by a vertex outside $\hat G_i$, $|\hat S_i|\ge \Z(G'')$.  Since $S$ is a minimum zero forcing set of $G$, $|\hat S_i|\le \Z(G'')+1$.  Let  $I=\{i: |\hat S_i|= \Z(G'')+1\}$ and $B=\{u_i: i \in I\}$.  We show that $B$ is a zero forcing set of  $G'$, which implies that $\Z(G)= |S|\ge q\Z(G'')+\Z(G')$.

There are four possible types of forces:  1) $u_i\to u_j$, which requires all vertices of $G''_i$ to be filled; 2)  $x^i_k\to x^i_j$, which requires  $u_i$ to be filled; 3)  $u_i\to x^i_j$, which requires every vertex of $G''_i$ except $x^i_j$ to be filled; 4) $ x^i_j\to u_i$, which requires every neighbor of $x^i_j$ in $G''_i$ to be filled. 
Since $G''$ has no isolated vertices, a force $u_i\to x^i_j$ can be replaced by $x^i_k\to x^i_j$ where $x^i_k\in N_{G''_i}(x^i_j)$ without affecting any other forces. 
If a force $ x^i_j\to u_i$ takes place, it is the first force involving a vertex in $G''_i$.   Since $G''$ has no isolated vertices, we may replace $S$ by $S\setminus \{x^i_k\}\cup\{u_i\}$ where  $x^i_k\in N_{G''_i}(x^i_j)$, and replace $ x^i_j\to u_i$ by $x^i_j\to  x^i_k$ without affecting any other forces.  
Thus we may assume every force is one of the first two types. Given that only the first two types of forces are performed, $B$ must be a zero forcing set of  $G'$.  
\epf

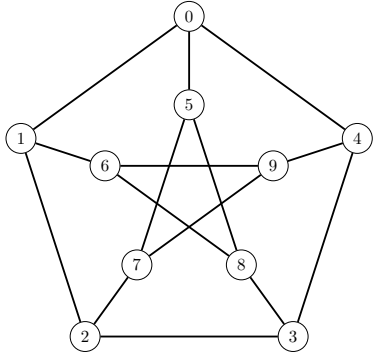
\begin{figure}[ht]
\centering
\begin{tikzpicture}[scale=0.5]
    \node[circle,draw=black,fill=white] (0) at (0,4) {$0$};
    \node[circle,draw=black,fill=white] (1) at (-3.80,1.24) {$1$};
    \node[circle,draw=black,fill=white] (2) at (-2.35,-3.24) {$2$};
    \node[circle,draw=black,fill=white] (3) at (2.35,-3.24) {$3$};
    \node[circle,draw=black,fill=white] (4) at (3.80,1.24) {$4$};
    
    \node[circle,draw=black,fill=white] (5) at (0,2) {$5$};
    \node[circle,draw=black,fill=white] (6) at (-1.90,0.62) {$6$};
    \node[circle,draw=black,fill=white] (7) at (-1.18,-1.62) {$7$};
    \node[circle,draw=black,fill=white] (8) at (1.18,-1.62) {$8$};
    \node[circle,draw=black,fill=white] (9) at (1.90,0.62) {$9$};

    \foreach \x/\y in {0/1,1/2,2/3,3/4,4/0}
        \draw[black,=>latex',-,very thick] (\x) -- (\y);
    \foreach \x/\y in {0/5,1/6,2/7,3/8,4/9}
        \draw[black,=>latex',-,very thick] (\x) -- (\y);
    \foreach \x/\y in {5/7,7/9,9/6,6/8,8/5}
        \draw[black,=>latex',-,very thick] (\x) -- (\y);
\end{tikzpicture}
\caption{Petersen Graph}
\label{petersen}
\end{figure}

\begin{example} \label{ex:petersen}
 Let $P$ be the Petersen graph given in Fig. \ref{petersen}.  The  minimal forts of $P$ are
 $\{0, 1, 3, 8\}, \{0, 1, 9, 7\}, \{0, 2, 3, 5\}, \{0, 2, 4, 7\}, \{0, 8, 2, 9\}, \{0, 3, 6, 7\}, \{0, 8, 4, 6\}, \{0, 9, 5, 6\}, \{1, 2, 4, 9\},$ \\$
 \{8, 1, 2, 5\}, \{1, 3, 4, 6\}, \{1, 3, 5, 9\}, \{8, 1, 4, 7\}, \{1, 5, 6, 7\}, \{9, 2, 3, 6\}, \{2, 4, 5, 6\}, \{8, 2,
6, 7\}, \{3, 4, 5, 7\},$\\$
\{8, 9, 3, 7\}, \{8, 9, 4, 5\}$  \cite{sage}.

Since $\F_P$ is 4-uniform and 8-regular, we have $\Z^*(P)=\ft^*(P)=\frac{20}{8}=2.5=\frac {10}4$ and since we can find two disjoint forts in the list, we also have $\ft(P)=2$.  The zero forcing number of the Petersen graph $P$ is $\Z(P)=5$  \cite{aim}.
\end{example}

Let $Q_d$ $= \underbrace{K_2 \Box K_2 \Box \cdots \Box K_2}_{d \text{ times}}$ denote the \emph{hypercube} of dimension $d$.
 The \emph{failed zero forcing number} of a graph $G$ is the largest cardinality of a set of vertices of $G$ that is not a zero forcing set.  Note that the failed zero forcing number of $G$ is equal to the order minus the minimum cardinality of a fort of $G$, because  $V(G)\setminus F$ is a failed zero forcing set for every fort $F$, and $V(G)\setminus W$ is a fort for every maximal failed zero forcing set $W$.

\begin{proposition}\label{ex:hypercube}
For $d\ge 2$, $\Z^*(Q_d)=\frac {2^d}d$.
\end{proposition}
\bpf 
First we show that the neighborhood $N(v)$ is a fort for any vertex $v$. Observe that the argument is the same for every vertex. We use the $d$-tuple representation with $v=00\dots0$, so $u\in N(v)$ if and only if there is exactly one $1$ in $u$. Then $w\in V(Q_d)\setminus{N(v)}$ has a neighbor in $N(v)$ if and only if $w$ has exactly two 1s or is $v$. In either case, $w$ has at least two neighbors in $N(v)$. 
Then  $\frac{2^d}d\le \Z^*(Q_d)$ by %Proposition 
Remark \ref{p:symm-fort-ratio} using the $2^d$ forts of the form $N(x)$ for $x\in V(Q_d)$ because each $y\in V(Q_d)$ is in exactly $d$ such forts since $y$ has $d$ neighbors.

By~\cite[Theorem 4.2]{Afzali2024}, the failed zero forcing number of $Q_{d}$ is $2^{d}-d$. 
Therefore, the minimum forts of $Q_{d}$ have cardinality $d$.
Thus $\Z^*(Q_d)\le \frac{2^d} d$ by Remark  \ref{Z*bds}.  
\epf

 The next result shows  that the difference between $\ft(G)$ and $\Z^*(G)$ can be (at least) %a fraction of order $n$:  $\Z^*(G)-\ft(G)$ can be 
 $\frac n 6$  asymptotically, just as  $K_n$ shows the difference between $\Z(G)$ and  $\Z^*(G)$  can be (at least) $\frac n 2$ asymptotically (where $n$ is the order of $G$).  The join of disjoint graphs $G$ and $G'$ is denoted by $G\vee G'$.

\begin{proposition}\label{ex:sK3veeK1}
   For $s\ge 2$, let $G=sK_3 \vee K_1$ and let $n=3s+1$.  Then $\ft(G)=\frac {n-1}3+1$, $\Z^*(G)=\frac {n-1}2$, and $\Z(G)=\frac{2(n-1)}3+1$.
\end{proposition}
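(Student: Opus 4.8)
The plan is to analyze the graph $G = sK_3 \vee K_1$ directly by first understanding its forts, then computing each of the three parameters separately. Write $V(G) = \{a\} \cup V_1 \cup \cdots \cup V_s$ where $a$ is the dominating vertex and each $V_i = \{x_i, y_i, z_i\}$ induces a triangle, with $a$ adjacent to every vertex. The first step is to classify the minimal forts. I would observe that within a single triangle $V_i$, no proper nonempty subset is a fort on its own because a single vertex of $V_i$ has a neighbor in $V_i$ seeing exactly one filled vertex; but a pair like $\{x_i, y_i\}$ fails too since $z_i$ sees two of them — wait, that is fine for $z_i$, but $a$ sees exactly two, also fine, yet $z_i$ has $|N(z_i) \cap \{x_i,y_i\}| = 2 \neq 1$, so actually one must check $a$: $|N(a) \cap \{x_i,y_i\}| = 2$. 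So a pair inside one triangle \emph{is} a fort? No: check the third vertex $z_i$ — it sees both, fine; every outside vertex $v$ in another triangle sees none of $\{x_i,y_i\}$ except through $a$, but $v \not\sim x_i$. So $\{x_i, y_i\}$ appears to be a fort. I need to be careful here and recheck: the condition is $|N_G(v)\cap F| \ne 1$ for all $v \notin F$. For $v = z_i$: neighbors are $x_i, y_i, a$, intersection with $F=\{x_i,y_i\}$ is $2$. For $v = a$: intersection is $2$. For $v$ in another triangle: intersection is $0$. So yes, $\{x_i,y_i\}$ is a minimal fort (these are twins). Also $\{a, x_i\}$ type pairs are not forts since $y_i \sim a, x_i$ gives $2$, but some vertex in another triangle sees only $a$ — intersection $1$ — so not a fort. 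Thus the minimal forts are exactly the $3s$ pairs within the triangles (three per triangle), plus possibly larger forts involving $a$.

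The second step is to nail down the forts containing $a$. A fort $F \ni a$ must have, for every $v \notin F$, $|N(v) \cap F| \neq 1$; since $v \sim a$, this means $v$ sees at least one more vertex of $F$. If $V_i \not\subseteq F \cup \{a\}$, pick $v \in V_i \setminus F$; then $v$'s neighbors are the other two vertices of $V_i$ plus $a$, so $v$ must see at least one of the other two vertices of $V_i$ inside $F$. Working this out, the minimal way to satisfy this for triangle $i$ is either to put all of $V_i$ outside $F$ (then each $v \in V_i$ sees only $a$ from $F\cap(\{a\}\cup V_i)$ — that's $1$, bad unless... $v$ sees no other $F$-vertex, contradiction) or to include all three of $V_i$, or include exactly two... if $F \supseteq \{a\}$ and $|F \cap V_i| = 2$, the remaining $v\in V_i$ sees the two included plus $a$: intersection $3 \ne 1$, fine. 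If $|F \cap V_i| = 1$, say $x_i \in F$: then $y_i \notin F$ sees $x_i, z_i, a$; $z_i \notin F$; so $y_i$ sees $\{x_i, a\}$ — that's $2$, fine! And $z_i$ sees $\{x_i, a\}$ — also $2$, fine. So $|F \cap V_i| = 1$ also works. If $|F \cap V_i| = 0$: $v = x_i$ sees only $a$, intersection $1$, bad. So for every triangle, $F$ must hit it in $1, 2,$ or $3$ vertices. Hence the minimal forts containing $a$ are exactly $\{a\} \cup \{$one vertex from each $V_i\}$, giving $3^s$ minimal forts of size $s+1$. This is the complete classification: $\F_G$ has $3s$ edges of size $2$ and $3^s$ edges of size $s+1$.

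The third step computes the three quantities. For $\ft(G)$: among the size-$2$ forts we can pick one per triangle, getting $s$ disjoint pairs, and none of them contains $a$; can we add one more disjoint fort? The only remaining forts containing $a$ all use $a$, and adding an $a$-fort requires a vertex from each triangle not already used — but one vertex per triangle is still free, so $\{a, z_1, z_2, \dots, z_s\}$ (if we used $\{x_i,y_i\}$ each) is disjoint from all chosen pairs. That gives $s + 1 = \frac{n-1}{3} + 1$ disjoint forts. One then argues $s+1$ is optimal: any matching uses at most one $a$-fort, and the remaining forts all live inside the triangles with at most one disjoint pair per triangle (a triangle has only $3$ vertices so at most one disjoint pair fits), giving at most $s + 1$ total. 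For $\Z^*(G)$: by Remark~\ref{r:nover2} (or Lemma~\ref{l:F2} applied to the twin pairs inside each triangle), every vertex of each $V_i$ must get weight $\tfrac12$; the vertex $a$ lies in no size-$2$ fort, so assign $\omega(a) = 0$ and check the size-$(s+1)$ forts get weight $s \cdot \tfrac12 \ge 1$ since $s \ge 2$. Total weight $= 3s \cdot \tfrac12 = \tfrac{3s}{2} = \tfrac{n-1}{2}$, and this is optimal since the twin-pair constraint forces it. For $\Z(G)$: I would show a zero forcing set needs $2s + 1$ vertices — take two vertices from each triangle ($2s$ of them) plus $a$; then in each triangle, $a$ has exactly one unfilled neighbor... no, $a$ has many unfilled neighbors initially. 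Better: fill two per triangle and argue the third gets forced by its triangle-neighbor once enough is filled, then finally... actually one needs to be careful; use Theorem~\ref{thm.fortz}, $\Z(G) = \tau(\F_G)$: a transversal must hit every size-$2$ pair within $V_i$, which forces at least $2$ vertices from each $V_i$ (to hit all three pairs of a triangle you need $2$), giving $\ge 2s$; and it must hit every $a$-fort $\{a\} \cup \{$transversal of the product$\}$ — if we pick exactly two from each $V_i$, the third vertices $z_1, \dots, z_s$ are unpicked, and $\{a, z_1, \dots, z_s\}$ is a fort not yet hit, so we need $a$ or one more $z_i$; either way $\ge 2s + 1$, and $\{a\} \cup (V \setminus \{z_1,\dots,z_s\})$ of size $2s+1$ is a transversal. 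So $\Z(G) = 2s+1 = \frac{2(n-1)}{3} + 1$.

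The main obstacle I anticipate is the fort classification, specifically being meticulous about which subsets containing $a$ (or not) actually satisfy the fort condition for \emph{every} outside vertex — it is easy to overlook a vertex $v$ in some far triangle that sees only $a$ from $F$. Once the fort hypergraph structure ($3s$ pairs plus $3^s$ $(s+1)$-sets) is pinned down correctly, all three computations are short LP/combinatorial arguments via Theorem~\ref{thm.fortz}, Observation~\ref{obs.fteqmatch}, and Lemma~\ref{l:F2}. A secondary subtlety is confirming the size-$(s+1)$ $a$-forts are genuinely \emph{minimal} (removing $a$ leaves a set hitting each triangle in one vertex, and then some $v$ in a triangle sees exactly that one vertex — intersection $1$ — so it is not a fort; removing a triangle-vertex $x_i$ leaves $v = x_i$ seeing only $a$, intersection $1$, not a fort), which I would state explicitly.
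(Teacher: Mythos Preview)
Your proposal is correct and follows essentially the same approach as the paper: classify the minimal forts as the $3s$ twin pairs within triangles together with the $3^s$ sets of the form $\{a\}\cup\{\text{one vertex per triangle}\}$, then read off $\ft$, $\Z^*$, and $\Z$ from this classification via the matching/transversal characterizations and Lemma~\ref{l:F2}. You are in fact more explicit than the paper on two points---the minimality check for the $(s{+}1)$-forts and the pigeonhole argument that at most one disjoint pair fits in each triangle---both of which the paper leaves implicit.
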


\bpf Let $c$ be the vertex of the $K_1$, so $\deg_G c=3s$. Label the vertices of the $j$th copy of $K_3$ by $x_j,y_j,z_j$.  Then each of $\{x_j,y_j\}$, $\{x_j,z_j\}$, and $\{y_j,z_j\}$ is a fort for $j=1,\dots,s$; we call such a fort a \emph{standard   $2$-fort}.  
Any set of the form $\{c,w_1,\dots,w_s\}$ where $w_j\in\{x_j,y_j,z_j\}$ is a fort; we call such a fort a \emph{standard $(s+1)$-fort}. 
By choosing forts $\{x_j,y_j\}, j=1,\dots,s$ and $\{c,z_1,\dots,z_s\}$, we see that $\ft(G)\ge s+1$.  

Now consider a minimal fort $F$ that is not a standard 2-fort.  Since $F$ must have at least two vertices, it has some vertex other than $c$; without loss of generality, $z_1\in F$.  Note that $F$ cannot contain two elements of $\{x_j,y_j,z_j\}$ for any $j=1,\dots,s$ or it would contain a standard 2-fort.  
Thus $x_1,y_1\not\in F$.  Since $z_1\in N_G(x_1)$, $x_1$ must have another neighbor in $F$, i.e., $c\in F$.  Now for $j=2,\dots, s$, $c\in N(x_j)$, so at least one of $x_j,y_j,z_j$ must be in $F$.  Since $F$ is minimal, $F$ is a standard $(s+1)$-fort.  Thus $\ft(G)=s+1  = \frac{n-1}3+1$. 

 For each $j=1, \dots, s$ and each vertex $w_j \in \{x_j,y_j,z_j\}$, $w_j$ is contained in a standard 2-fort, so by Lemma \ref{l:F2}, $\omega(w_j)=\frac{1}{2}$ for any optimal weight function $\omega$. Starting with $\omega(w_j)=\frac{1}{2}$ for each vertex $w_j \in \{x_j,y_j,z_j\}$ and assigning  $\omega(c)=0$ results in a valid weight function, which is optimal.  Thus $\Z^*(G)=\frac{n-1}2$.

A set  that contains $c$ and two vertices from each copy of $K_3$ is a zero forcing set so $\Z(G)\le 2s+1$.  Any zero forcing set $B$ must contain an element of each fort.  Thus $2s$ elements %(all vertices of $sK_3$) 
are needed by the standard 2-forts.  With exactly these $2s$ vertices in $B$, there is a standard $(s+1)$-fort that does not contain an element of $B$.  Thus, $\Z(G)\ge 2s+1$.
\epf

 \begin{remark}\label{r:conn}
Observe that fort number and fractional zero forcing number sum over the connected components. Furthermore, isolated vertices and connected components with order at least two behave very differently, because $\ft(K_1)=\Z^*(K_1)=1$, whereas $\ft(G)\le \Z^{*}(G)\leq \frac n 2$ for a connected graph of order $n\ge 2$.
\end{remark}

As is done in the study of zero forcing number,  we can focus on connected graphs of order at least two and then infer results for all graphs by Remark \ref{r:conn}.

% \section{Extreme values of fort number and  fractional zero forcing number}

Finally, we characterize graphs having the lowest possible value of fractional zero forcing number and highest possible fort number  and fractional zero forcing number.  Note that $\Z^*(G)\ge 1$ for every graph $G$ and this bound is achieved by $P_n$ (cf.  Example \ref{ex:path}). The next result shows that paths are the only graph achieving this lowest value.
\begin{proposition}\label{p:fractZ=1} For a graph $G$, 
$\Z^{*}(G)=1$ if and only if $G$ is a path graph. 
\end{proposition}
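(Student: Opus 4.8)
The plan is to prove the chain of equivalences ``$G$ is a path $\iff \Z(G)=1 \iff \Z^{*}(G)=1$'', where the middle equivalence does the real work and the outer one is the well-known characterization of graphs with zero forcing number $1$. First, for $G=P_n$ the inequality $1\le\ft(G)\le\ft^{*}(G)=\Z^{*}(G)\le\Z(G)$ from \eqref{eq:ftz-ineq}, combined with $\Z(P_n)=1$ (Example \ref{ex:path}), immediately gives $\Z^{*}(P_n)=1$; here $\ft(G)\ge 1$ because $V(G)$ is always a fort, so $\F_G\neq\emptyset$. So the content is to show that $\Z^{*}(G)=1$ forces $G$ to be a path.

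Suppose $\Z^{*}(G)=1$ and let $\omega$ be an optimal weight function, so that $\sum_{v\in V(G)}\omega(v)=1$ while $\sum_{v\in F}\omega(v)\ge 1$ for every minimal fort $F$. I would first observe that for any minimal fort $F$ we have $1\le\sum_{v\in F}\omega(v)\le\sum_{v\in V(G)}\omega(v)=1$, so $\sum_{v\in F}\omega(v)=1$ and therefore $\omega(v)=0$ for every $v\notin F$; equivalently, $\supp(\omega)\subseteq F$. Since $\supp(\omega)$ is nonempty and this inclusion holds for every minimal fort, any vertex $v_0\in\supp(\omega)$ lies in every minimal fort of $G$, hence — because every fort contains a minimal fort — in every fort of $G$. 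Thus $\{v_0\}$ meets every fort, so Theorem \ref{fortsiffzfs} shows $\{v_0\}$ is a zero forcing set and $\Z(G)=1$.

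To finish I would invoke the standard fact that $\Z(G)=1$ if and only if $G$ is a path; if a self-contained argument is preferred, note that $\Z(\cdot)$ and $\ft(\cdot)$ are additive over connected components (Remark \ref{r:conn}) and each component has $\ft\ge 1$, so $\Z(G)=1$ forces $G$ to be connected, and then running the forcing process from the single initially filled vertex $v_0$ and following the uniquely determined sequence of forces shows that every vertex has degree at most $2$ and that $G$ is exactly the path traced out. The only delicate point in the whole argument is the support computation in the second paragraph — that an optimal primal solution of total value $1$ must be supported inside every minimal fort — and after that everything is bookkeeping or a citation, so I do not expect a serious obstacle.
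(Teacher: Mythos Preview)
Your proposal is correct and follows essentially the same approach as the paper: both arguments show that an optimal weight function of total weight $1$ must have its support contained in every minimal fort, so some vertex lies in every fort, giving $\Z(G)=1$ and hence $G$ is a path. Your squeeze argument $1\le\sum_{v\in F}\omega(v)\le\sum_{v\in V(G)}\omega(v)=1$ is a slightly more direct packaging of what the paper does by contradiction, but the core idea is identical.
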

\begin{proof}
%If $G$ is a path graph, then it is well-known that $\Z(G)=1$, and this implies that $\Z^{*}(G)=1$ by \eqref{eq:ftz-ineq}. 
Suppose that $\Z^{*}(G)=1$. In what follows, we show that there exists a vertex $v\in V(G)$ such that $v\in F$ for all $F\in\F_{G}$.  Then $\{v\}$ intersects every fort and  so is a zero forcing set.  Thus $\Z(G)=1$, which implies $G$ is a path graph  \cite[Theorem 9.12(1)]{HogLinShad}.
To this end, suppose that no such $v\in V(G)$ exists.
Let $\omega$ be an optimal weight function and let $F\in\F_{G}$.
Then, $\sum_{v\in F}\omega(v)\geq 1$ and there exists a vertex $u\in F$ with $\omega(u)>0$.  Since $u$ is not in every fort, there exists a fort $\hat{F}\in\F_{G}$ such that $u\notin\hat{F}$.
Then $\sum_{v\in V(G)}\omega(v)\ge \sum_{v\in\hat{F}}\omega(v)+\omega(u)\geq 1+\omega(u)>1$,  which contradicts $\Z^{*}(G)=1$.
\end{proof}

%Finally, we characterize  families of graphs attaining extreme values of $\Z^*$ and $\ft^*$. 
Note that $\ft(G)\ge 1$ for every graph $G$. This bound is achieved by $P_n, C_{2k+1}$, and numerous other graphs. 

By Remark \ref{r:nover2}, $\Z^*(G)\le \frac n 2$ for every connected graph of order $n\ge 2$.  This bound is achieved by $K_n$, $K_{p,q}$ with $p,q\ge 2$, and some other graphs. The next result uses forts of order two to characterize graphs with $\Z^*(G)= \frac n 2$.

\begin{proposition}\label{p:Z*n/2}
     Let $G$ be a connected graph of order $n\ge 2$.   Then every vertex of $G$ is in a fort of cardinality two if and only if $\Z^*(G)=\frac n 2$.  
\end{proposition}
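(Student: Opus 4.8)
The plan is to prove both directions separately. The reverse direction (if $\Z^*(G) = \tfrac n2$ then every vertex lies in a $2$-fort) is the substantive one and will be the main obstacle; the forward direction is essentially Lemma~\ref{l:F2} applied vertex-by-vertex.

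For the forward implication, suppose every vertex of $G$ lies in a fort of cardinality two. Let $\omega$ be an optimal weight function. For each vertex $v$, pick a $2$-element fort $F_v$ containing $v$; then $F_v = \{v, w\}$ and Lemma~\ref{l:F2} tells us $v$ and $w$ are twins and $\omega(v) = \omega(w) = \tfrac12$. Since $v$ was arbitrary, $\omega(v) = \tfrac12$ for every $v \in V(G)$, so $\Z^*(G) = \sum_{v} \omega(v) = \tfrac n2$. (Alternatively, one notes the $2$-forts already force every vertex weight to be exactly $\tfrac12$ in any optimal solution, and then combines with Remark~\ref{r:nover2} which gives $\Z^*(G) \le \tfrac n2$; equality follows.)

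For the reverse implication, suppose $\Z^*(G) = \tfrac n2$, and suppose toward a contradiction that some vertex $u$ lies in no fort of cardinality two. The idea is to construct a valid weight function of total weight strictly less than $\tfrac n2$ by slightly lowering the weight on $u$ (and possibly compensating elsewhere is not even needed if $u$ is only ever in forts of size $\ge 3$). Concretely, start from the uniform weighting $\omega_0(v) = \tfrac12$ for all $v$, which is valid by Remark~\ref{r:nover2} and has total weight $\tfrac n2$. Now I would like to decrease $\omega_0(u)$ by some $\varepsilon > 0$. A fort $F$ containing $u$ has $\sum_{v \in F} \omega_0(v) = \tfrac{|F|}{2} \ge \tfrac32$ (since $|F| \ge 3$ by assumption on $u$), so it can absorb a decrease of up to $\tfrac12$ at $u$ and still have weight $\ge 1$; forts not containing $u$ are unaffected. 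Thus $\omega(u) = 0$, $\omega(v) = \tfrac12$ otherwise, is still valid, with total weight $\tfrac{n-1}{2} < \tfrac n2$, contradicting $\Z^*(G) = \tfrac n2$. The one subtlety to handle carefully: the hypothesis "$u$ is in no $2$-fort" must be stated in terms of minimal forts vs.\ all forts — but since a $2$-element fort is automatically minimal (any proper subset is a single vertex, which cannot be a fort unless the vertex is isolated, and then $G$ is disconnected), "$u$ is in no minimal fort of cardinality $2$" is equivalent to "$u$ is in no fort of cardinality $2$," and every larger fort has cardinality $\ge 3$; if $u$ lies in no minimal fort of size $2$, it lies in no fort of size $2$ at all, since a size-$2$ fort is its own minimal fort. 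So the argument applies to the set $\F_G$ of minimal forts (the only ones that matter for $\Z^*$), and every minimal fort containing $u$ has size $\ge 3$.

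The main obstacle is making sure the case analysis on fort sizes is airtight — in particular confirming that for a connected graph of order $\ge 2$ there are no forts of size $1$, so that "not in a $2$-fort" really does force "in a fort of size $\ge 3$" for any minimal fort through $u$, which is what powers the $\varepsilon$-perturbation. This is immediate from Remark~\ref{r:nover2} (each fort of a connected graph of order $\ge 2$ has at least two vertices). After that, the perturbation argument is routine.
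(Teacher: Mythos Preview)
Your proposal is correct and takes essentially the same approach as the paper: the forward direction invokes Lemma~\ref{l:F2} to pin every vertex weight at $\tfrac12$, and the reverse direction sets the weight of a vertex outside every $2$-fort to $0$ (keeping the rest at $\tfrac12$) to produce a valid weighting of total weight $\tfrac{n-1}{2}<\tfrac n2$. The paper's version is phrased as a direct contrapositive rather than a contradiction, but the construction and the use of connectivity (no size-$1$ forts) are identical.
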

\bpf
If every vertex of $G$ is in a fort of cardinality two, then  $\omega(v)=\frac 1 2$ for every vertex $v$ is an optimal  weight function by Lemma \ref{l:F2}.  If $z$ is not in any fort of cardinality two, then $\omega(v)=\frac 1 2$ for every vertex  $v\ne z$ and  $\omega(z)=0$ is a valid weight function, so $\Z^*(G)<\frac n 2$.
\epf

\begin{proposition}\label{p:ft:n/2}
Let $G$ be a graph of order $n$ with no isolated vertices.  Then $\ft(G)=\frac n 2$ if and only if   all of the following conditions are true:
\ben[$(1)$]
\item\label{c1:ft-half} $n=2k$ is even,
\item\label{c2:ft-half} there exists a partition of the vertices of $G$ into $F_i=\{x_i,y_i\}, i=1,\dots, k$,  and
\item\label{c3:ft-half} %and for each $i\ne j\in [\{1,\dots,k\}$, either $S_{i,j}:=\{x_ix_j, x_iy_j, y_ix_j, y_iy_j\}\cap E(G)=S_{i,j}$ or $S_{i,j}\cap E(G)=\emptyset$. \textcolor{red}{(This mixture of $=$ and $:=$ seems problematic, particularly when $:=$ is referencing only a portion of the expression which $=$ is referencing.  Perhaps) 
  for each $i\ne  j$, $S_{i,j}\cap E(G)=S_{i,j}$ or $S_{i,j}\cap E(G)=\emptyset$, where $S_{i,j}=\{x_ix_j, x_iy_j, y_ix_j, y_iy_j\}$.
\een
\end{proposition}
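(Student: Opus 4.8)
The plan is to prove the two implications directly, with the central observation being that in a graph $G$ with no isolated vertices every fort has at least two vertices: a singleton $\{v\}$ fails the fort condition at any neighbor of $v$, and such a neighbor exists. Consequently a family of $m$ pairwise disjoint forts occupies at least $2m$ vertices, so $\ft(G)\le\lfloor n/2\rfloor$ for every such $G$ (this also recovers Remark~\ref{r:nover2} and~\eqref{eq:ftz-ineq} via Remark~\ref{r:conn}, but the direct argument is cleaner). In particular $\ft(G)=n/2$ already forces $n$ to be even, which is condition~\ref{c1:ft-half}, and this upper bound makes the ``if'' direction almost automatic once the relevant pairs are shown to be forts.

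For the ``if'' direction I would assume conditions \ref{c1:ft-half}--\ref{c3:ft-half} and first check that each $F_i=\{x_i,y_i\}$ is a fort. A vertex $v\notin F_i$ equals $x_j$ or $y_j$ for some $j\ne i$ by the partition in condition~\ref{c2:ft-half}, and condition~\ref{c3:ft-half} says that either all four edges of $S_{i,j}$ lie in $E(G)$ or none of them do; in the first case $N_G(v)\cap F_i=\{x_i,y_i\}$ and in the second $N_G(v)\cap F_i=\emptyset$, so $\abs{N_G(v)\cap F_i}\ne 1$. Hence $F_1,\dots,F_k$ are $k=n/2$ pairwise disjoint forts, so $\ft(G)\ge n/2$, and combined with $\ft(G)\le n/2$ this gives $\ft(G)=n/2$.

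For the ``only if'' direction I would assume $\ft(G)=n/2$, which as noted forces $n=2k$ to be even (condition~\ref{c1:ft-half}). Fix pairwise disjoint forts $F_1,\dots,F_k$ witnessing $\ft(G)=k$. Each $\abs{F_i}\ge 2$ since $G$ has no isolated vertex, while $\sum_{i=1}^k\abs{F_i}\le n=2k$ by disjointness; therefore every $\abs{F_i}=2$ and $\bigcup_i F_i=V(G)$, so writing $F_i=\{x_i,y_i\}$ gives the partition in condition~\ref{c2:ft-half}. Each $F_i$ is now a two-element fort, so Lemma~\ref{l:F2} shows $x_i$ and $y_i$ are twins. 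To obtain condition~\ref{c3:ft-half}, fix $i\ne j$: the twin property of $\{x_i,y_i\}$ applied at $x_j$ gives $x_ix_j\in E(G)\Leftrightarrow y_ix_j\in E(G)$, applied at $y_j$ gives $x_iy_j\in E(G)\Leftrightarrow y_iy_j\in E(G)$, and the twin property of $\{x_j,y_j\}$ applied at $x_i$ gives $x_ix_j\in E(G)\Leftrightarrow x_iy_j\in E(G)$; chaining these equivalences shows $S_{i,j}\subseteq E(G)$ or $S_{i,j}\cap E(G)=\emptyset$.

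The only points that need care are exactly the two places the hypotheses enter: ``no isolated vertices'' is what forces every fort to have at least two vertices and hence the partition into pairs, and one must alternate between the twin relation from $\{x_i,y_i\}$ (which links adjacency to $x_i$ with adjacency to $y_i$) and the one from $\{x_j,y_j\}$ (which links adjacency to $x_j$ with adjacency to $y_j$) to connect all four edges of $S_{i,j}$. Neither step is deep, so I do not expect a genuine obstacle beyond this bookkeeping.
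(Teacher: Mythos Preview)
Your proof is correct and follows essentially the same approach as the paper's. The only cosmetic difference is that in the ``only if'' direction you invoke Lemma~\ref{l:F2} to name $x_i,y_i$ as twins and then chain the twin equivalences, whereas the paper applies the fort condition $\abs{N(\cdot)\cap F_i}\ne 1$ directly at the relevant vertices; these are the same argument in different language.
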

\bpf Let $G$ be a graph satisfying conditions \eqref{c1:ft-half}--\eqref{c3:ft-half}.  We show that each $F_i$ is a fort:  Let $u\not \in F_i$.  Then there exists $j\ne i$ such that $u\in F_j$. Moreover, $|N(u)\cap F_i|= 2$ if $S_{i,j}\cap E(G)=S_{i,j}$ and $|N(u)\cap F_i|= 0$ if $S_{i,j}\cap E(G)=\emptyset$.  So $F_i$ is a fort, and therefore $\ft(G)=\frac n 2$.

Now assume $G$ is a graph such that $\ft(G)=\frac n 2$.
Then $n=2k$ is even and there are disjoint forts  $F_i=\{x_i,y_i\}, i=1,\dots, k$ that partition  the vertices of $G$.  For $i\ne j$, define $S_{i,j}=\{x_ix_j, x_iy_j, y_ix_j, y_iy_j\}$.
 If $S_{i,j}\cap E(G)=\emptyset$, then there is nothing to prove.  So assume $S_{i,j}\cap E(G)\ne \emptyset$, and without loss of generality, $x_ix_j\in E(G)$. Then $|N(x_i)\cap F_j|\ne 1$ implies $x_iy_j\in E(G)$ and $|N(x_j)\cap F_i|\ne 1$ implies $x_jy_i\in E(G)$.  Then  $\abs{N(y_i)\cap F_j}\ne 1$ implies $ y_iy_j\in E(G)$.  Thus  $S_{i,j}\cap E(G)=S_{i,j}$. 
 \epf

The graphs $K_{2k}$ and $K_{2p,2q}$ are easy examples of such graphs,  as is a graph constructed from
the $r$th \emph{friendship graph}  $F_r=rK_2\vee K_1$, as seen in the next example.  % (a bouquet of 3-cycles).  %\textcolor{red}{(As it currently stands, this sentence does not read as related to the preceding sentence.  It should either be moved into the example, or something should be added which identifies the upcoming example and connects $F_r$ to $K_{2k}$ and $K_{2q,2q}$, at least thematically.)}

\begin{example} \label{ex:sK2veeK2} The graph $G=F_{r} \vee K_1 = rK_2\vee K_2$     is an example of a graph satisfying the conditions in Proposition \ref{p:ft:n/2}: Each $K_2$ is a fort, $S_{i,j}\cap E(G)=\emptyset$ when both $F_i$ and $F_j$ come from $rK_2$ and $S_{i,j}\cap E(G)=S_{i,j}$ when one of $F_i$ and $F_j$ is the $K_2$ that joins to $rK_2$.   Note that while $\ft(G)=r+1=\Z^*(G)$, $\Z(G)=r+2$ (since it is well known that $\Z(F_r)=r+1$ and adding a  vertex adjacent to every other vertex (a {universal vertex}) raises the zero forcing number by one \cite[Theorem 9.5, Proposition 9.16]{HogLinShad}).
\end{example}

Proposition \ref{p:ft:n/2} does not cover all graphs $G$ of order $n$ such that $\Z^*(G)=\frac n 2$.  There are odd order graphs with this property, including $K_{2k+1}$ and the graphs in the next example.

\begin{example}
    Let $G$ be a graph constructed from $K_n$ with $n\ge 4$ by deleting a disjoint set of edges $E_0$ such that either every vertex is an endpoint of an edge in $E_0$ or at least two vertices are not endpoints of edges in $E_0$.  Then every vertex is in a 2-element fort because the endpoints of an edge deleted are a fort and any two vertices that are not endpoints of a deleted edge are a fort.  Thus $\Z^*(G)=\frac n 2$.
\end{example}

%Of course, the situation is very different without the assumption of no isolated vertices: As noted in Example \ref{ex:empty}, $\Z^*(\OL{K_n})=\ft(\OL{K_n})=n$.

%%%%%%%%%%%%%%%%%%%%%%%%%%%%%%%%%%%%%%%%%%%%%%%%%%%%%%%%%%%%

%%%%%%%%%%%%%%%%%%%%%%%%%%%%%%%

\section{Bounds on the zero forcing number of the Cartesian product}\label{sec:cart_prod_bounds}
In this section, we apply results on fractional zero forcing number to establish lower bounds on the zero forcing number of the Cartesian product of two graphs.   

% We now consider products of hypergraphs.
    For two hypergraphs, $H_1, H_2$, we define $H_1 \times H_2$ to be the hypergraph with \[ V(H_1 \times H_2) = V(H_1) \times V(H_2) \text{ and }  E(H_1 \times H_2) = \{e_1 \times e_2 ~|~ e_1 \in E(H_1), e_2 \in E(H_2)\}.\]
%We begin by establishing a result about forts of a Cartesian product of graphs.

\begin{proposition}\label{l:fort-prod}\label{prop:cart_tens_prod}
Let $G$ and $G'$ be graphs, let $F_{G}$ be a fort of $G$, and let $F_{G'}$ be a fort of $G'$. Then   $F_{G}\times F_{G'}$ is a fort of $G \Box G'$.  
\end{proposition}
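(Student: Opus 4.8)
The plan is to handle the two assertions in turn: first verify directly that $F_{G}\times F_{G'}$ satisfies the fort condition in $G\Box G'$, and then deduce the transversal inequality from Theorems~\ref{fortsiffzfs} and~\ref{thm.fortz}.

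For the first assertion I would set $F=F_{G}\times F_{G'}$ and use the fact that in a Cartesian product the neighborhood of $(v,v')$ is the disjoint union
\[ N_{G\Box G'}\big((v,v')\big)=\big(\{v\}\times N_{G'}(v')\big)\cup\big(N_{G}(v)\times\{v'\}\big). \]
Intersecting with $F$, the ``horizontal'' part $\{v\}\times N_{G'}(v')$ contributes $\{v\}\times(N_{G'}(v')\cap F_{G'})$ when $v\in F_{G}$ and nothing when $v\notin F_{G}$, and symmetrically the ``vertical'' part contributes $(N_{G}(v)\cap F_{G})\times\{v'\}$ when $v'\in F_{G'}$ and nothing otherwise. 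So for $(v,v')\notin F$ exactly one of the following holds: (i) $v\notin F_{G}$ and $v'\notin F_{G'}$, in which case $N_{G\Box G'}((v,v'))\cap F=\emptyset$; (ii) $v\notin F_{G}$ and $v'\in F_{G'}$, in which case $\abs{N_{G\Box G'}((v,v'))\cap F}=\abs{N_{G}(v)\cap F_{G}}$; or (iii) the mirror image of (ii). In case (ii) the count is not $1$ because $F_{G}$ is a fort of $G$ and $v\notin F_{G}$, and case (iii) is identical with the roles of $G$ and $G'$ swapped; case (i) gives count $0$. Hence in all cases the count differs from $1$, so $F$ is a fort of $G\Box G'$.

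For the second assertion I would invoke Theorem~\ref{thm.fortz} to write $\Z(G\Box G')=\tau(\F_{G\Box G'})$, and observe that every edge of $\F_{G}\times\F_{G'}$ has the form $F_{G}\times F_{G'}$ with $F_{G},F_{G'}$ minimal forts, hence is a fort of $G\Box G'$ by the first assertion, and is contained in $V(\F_{G})\times V(\F_{G'})=V(\F_{G}\times\F_{G'})$. Taking a minimum transversal $S$ of $\F_{G\Box G'}$ — equivalently, a minimum zero forcing set of $G\Box G'$ — Theorem~\ref{fortsiffzfs} gives that $S$ meets every fort, in particular every edge of $\F_{G}\times\F_{G'}$; after intersecting $S$ with $V(\F_{G})\times V(\F_{G'})$, which preserves all of those intersections and cannot increase cardinality, one obtains a transversal of $\F_{G}\times\F_{G'}$ of size at most $\Z(G\Box G')$. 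Therefore $\tau(\F_{G}\times\F_{G'})\le\Z(G\Box G')$.

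I do not expect a serious obstacle: the only delicate points are keeping the three-case split organized when computing $\abs{N_{G\Box G'}((v,v'))\cap F}$, and remembering to restrict $S$ to $V(\F_{G}\times\F_{G'})$ so that it genuinely qualifies as a transversal of that hypergraph rather than merely a hitting set for its edges within a larger ambient vertex set.
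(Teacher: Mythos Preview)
Your proof is correct and follows essentially the same approach as the paper: the same three-case neighborhood analysis for the first assertion, and the same observation that every edge of $\F_{G}\times\F_{G'}$ is a fort of $G\Box G'$ so any zero forcing set hits them all. Your explicit restriction of $S$ to $V(\F_{G})\times V(\F_{G'})$ is a nice bit of extra care that the paper omits, but it does not change the underlying argument.
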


\begin{proof} 
  Note that $x=(v,v') \in V(G \Box G') \setminus \lp F_G\x F_{G'}\rp $ implies  $v \not \in F_{G}$ or $v' \not \in F_{G'}$.  
Observe first that if both $v \not \in F_{G}$ and $v' \not \in F_{G'}$, then $|N_{G\Box G'}(x) \cap \lp F_G\x F_{G'}\rp|=0$.
So without loss of generality suppose $v \not \in F_{G}$ and $v' \in F_{G'}$.  Let $\abs{N_{G}(v) \cap F_{G}}=k $, so  $k \neq 1$.  
For each $w \in F_G$, $(w,v') \in  F_G\x F_{G'}$ is a neighbor of $x=(v,v')$   if and only if $w$ is a neighbor of $v$ in $G$.  
So, it follows that $\left\vert N_{G \Box G'}(x) \cap \lp F_G\x F_{G'}\rp \right\vert = k\ne 1$.  
Thus, %for each $v=(g,g') \in V(G \Box G') \setminus \hat{F}$, $\left\vert N_{G \Box G'}(v) \cap \hat{F} \right\vert \neq 1$, and so 
$F_G \times F_{G'}$ is a fort of $G \Box G'$. % Since  every edge in $\F_{G} \times \F_{G'}$ is  a fort of $G \Box G'$, any zero forcing set of $G \Box G'$ is a transversal of $\F_{G} \times \F_{G'}$, and it follows that $\tau(\F_{G} \times \F_{G'}) \le \Z(G \Box G')$. \hs{Furthermore, any optimal weight function $\omega$ for $G\Box G'$ is a valid weight function for $\F_{G} \times \F_{G'}$. 
%  Thus, $\tau^{*}(\F_{G} \times \F_{G'}) \le \Z^{*}(G \Box G')$.  Finally, any maximum matching of $\F_G \times \F_{G'}$ is a collection of disjoint forts in $G \Box G'$, and so $\mu(\F_{G} \times \F_{G'}) \le \ft(G \Box G')$.
\end{proof}

 The next result is immediate from the previous proposition.
\begin{corollary}\label{cor:fortcart}  For graphs $G$ and $G'$,
\[\tau(\F_{G} \times \F_{G'}) \le \Z(G \Box G'),
\tau^*(\F_{G} \times \F_{G'}) \le \Z^*(G \Box G'),\mbox{ and }
\mu(\F_{G} \times \F_{G'}) \le \ft(G \Box G').\]
%\bea \tau(\F_{G} \times \F_{G'}) &\le& \Z(G \Box G'),\\\tau^*(\F_{G} \times \F_{G'}) &\le& \Z^*(G \Box G'),\mbox{ and }\\\mu(\F_{G} \times \F_{G'}) &\le& \ft(G \Box G').\eea
\end{corollary}

% {\red \begin{proof}    Since  every edge in $\F_{G} \times \F_{G'}$ is  a fort of $G \Box G'$, any zero forcing set of $G \Box G'$ is a transversal of $\F_{G} \times \F_{G'}$, and it follows that $\tau(\F_{G} \times \F_{G'}) \le \Z(G \Box G')$. Furthermore, any optimal weight function $\omega$ for $G\Box G'$ is a valid weight function for $\F_{G} \times \F_{G'}$.   Thus, $\tau^{*}(\F_{G} \times \F_{G'}) \le \Z^{*}(G \Box G')$.  Finally, any maximum matching of $\F_G \times \F_{G'}$ is a collection of disjoint forts in $G \Box G'$, and so $\mu(\F_{G} \times \F_{G'}) \le \ft(G \Box G')$. \end{proof}

\begin{theorem}  {\rm \cite[Theorem 15 (Chapter 3)]{BergeHypergraphs}} \label{thm.berge}\label{sandwichproduct}
For two hypergraphs, $H_1$ and $H_2$,
\[ \mu(H_1)\mu(H_2) \le \mu(H_1 \times H_2) \le\tau^*(H_1) \mu(H_2) \le \tau^*(H_1) \tau^*(H_2)\] \[ =  \tau^*(H_1 \times H_2) \le  \tau^*(H_1) \tau(H_2) \le \tau(H_1 \times H_2) \le \tau(H_1) \tau(H_2) \]
\end{theorem}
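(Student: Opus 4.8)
The plan is to establish the chain of inequalities in Theorem~\ref{thm.berge} piece by piece, treating it as a sequence of more-or-less independent lemmas about an arbitrary product hypergraph $H_1\times H_2$. Since this is quoted from Berge, I would reconstruct the standard argument rather than invent something new. The central observation is that the fractional transversal number is \emph{multiplicative} over this product, $\tau^*(H_1\times H_2)=\tau^*(H_1)\tau^*(H_2)$, and that this equality is the hinge that both halves of the chain hang from. Everything to the left of it is a lower bound on $\mu(H_1\times H_2)$ together with the weak duality $\mu\le\tau^*$; everything to the right is an upper bound on $\tau(H_1\times H_2)$ together with $\tau^*\le\tau$.

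First I would prove the two outermost ``product of parameters'' bounds, which are the easy ones. For $\mu(H_1)\mu(H_2)\le\mu(H_1\times H_2)$: given a matching $M_1$ in $H_1$ and $M_2$ in $H_2$, the set $\{e_1\times e_2 : e_1\in M_1,\ e_2\in M_2\}$ is a matching in $H_1\times H_2$, since two such products $e_1\times e_2$ and $f_1\times f_2$ meet only if $e_1\cap f_1\ne\emptyset$ and $e_2\cap f_2\ne\emptyset$. For $\tau(H_1\times H_2)\le\tau(H_1)\tau(H_2)$: if $T_1$ is a transversal of $H_1$ and $T_2$ of $H_2$, then $T_1\times T_2$ meets every edge $e_1\times e_2$ because $T_1\cap e_1\ne\emptyset$ and $T_2\cap e_2\ne\emptyset$ give a common vertex. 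Next, the two ``weak duality'' links $\mu(H_1\times H_2)\le\tau^*(H_1\times H_2)$ and $\tau^*(H_1\times H_2)\le\tau(H_1\times H_2)$ are just $\mu\le\mu^*=\tau^*\le\tau$ applied to the hypergraph $H_1\times H_2$, already noted in the excerpt. That leaves the genuinely substantive steps: the multiplicativity of $\tau^*$, and the two ``mixed'' bounds $\mu(H_1\times H_2)\le\tau^*(H_1)\mu(H_2)$ and $\tau^*(H_1)\tau(H_2)\le\tau(H_1\times H_2)$.

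For $\tau^*(H_1\times H_2)\le\tau^*(H_1)\tau^*(H_2)$: take optimal fractional transversals $y^{(1)}$ on $V(H_1)$ and $y^{(2)}$ on $V(H_2)$ and set $z_{(v_1,v_2)}=y^{(1)}_{v_1}y^{(2)}_{v_2}$; then for any edge $e_1\times e_2$, $\sum_{(v_1,v_2)\in e_1\times e_2}z_{(v_1,v_2)}=\big(\sum_{v_1\in e_1}y^{(1)}_{v_1}\big)\big(\sum_{v_2\in e_2}y^{(2)}_{v_2}\big)\ge 1$, and the total weight is $\tau^*(H_1)\tau^*(H_2)$. For the reverse $\tau^*(H_1\times H_2)\ge\tau^*(H_1)\tau^*(H_2)$, I would pass to the dual: take optimal fractional matchings $w^{(1)}$ on $E(H_1)$ and $w^{(2)}$ on $E(H_2)$ (so $\sum w^{(i)}=\mu^*(H_i)=\tau^*(H_i)$), set $w_{e_1\times e_2}=w^{(1)}_{e_1}w^{(2)}_{e_2}$, and check the vertex constraint: for a vertex $(v_1,v_2)$, $\sum_{e_1\times e_2\ni(v_1,v_2)}w_{e_1\times e_2}=\big(\sum_{e_1\ni v_1}w^{(1)}_{e_1}\big)\big(\sum_{e_2\ni v_2}w^{(2)}_{e_2}\big)\le 1$. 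The one subtlety here is the possibility that a single edge of $H_1\times H_2$ arises as $e_1\times e_2$ for several distinct pairs $(e_1,e_2)$ (when products coincide), which could inflate the assigned weight; I would handle this either by arguing the products are distinct when $H_i$ are simple, or by summing all contributing weights into a single variable and noting the constraints are still respected because the vertex-constraint computation above is really a sum over incident \emph{pairs}. For the two mixed bounds, I would use essentially the same tensoring trick in a ``one fractional, one integral'' form: for $\mu(H_1\times H_2)\le\tau^*(H_1)\mu(H_2)$, given a matching $M$ of $H_1\times H_2$, project each edge to its $H_2$-coordinate to partition $M$ according to which edge of $H_2$ (or a chosen one) it uses, bound the number of edges sharing a fixed $H_2$-edge by $\tau^*(H_1)$ via an LP-duality/disjointness argument on the $H_1$-coordinates, and sum over at most $\mu(H_2)$ classes; symmetrically (by duality/transpose of the $\tau^*$ multiplicativity argument) for $\tau^*(H_1)\tau(H_2)\le\tau(H_1\times H_2)$.

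The main obstacle I expect is precisely the bookkeeping around coincident product edges and, more seriously, the mixed inequality $\mu(H_1\times H_2)\le\tau^*(H_1)\mu(H_2)$: it is not a pure tensor-product manipulation but requires an argument that a family of pairwise-disjoint edges of $H_1\times H_2$ all sharing a common $H_2$-component projects to a ``fractionally matchable'' family in $H_1$, so that its size is at most $\mu^*(H_1)=\tau^*(H_1)$. Getting the projection/partition right — in particular making sure each edge of the product matching is assigned to exactly one $H_2$-edge and that the resulting $H_1$-edges within a class are pairwise disjoint — is the delicate point; the cleanest route is probably to cite \cite[Chapter 3]{BergeHypergraphs} for this specific inequality, or to reprove it via the LP: $\mu(H_1\times H_2)\le\mu^*(H_1\times H_2)=\tau^*(H_1\times H_2)=\tau^*(H_1)\tau^*(H_2)$ already gives a weaker chain, and the sharper $\tau^*(H_1)\mu(H_2)$ interpolant follows from applying weak duality in only the $H_1$ factor. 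Once multiplicativity of $\tau^*$ is in hand, the rest is routine, so I would front-load the effort there.
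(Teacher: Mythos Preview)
The paper does not prove this statement at all: Theorem~\ref{thm.berge} is quoted verbatim from \cite[Chapter~3, Theorem~15]{BergeHypergraphs} and used as a black box, so there is no ``paper's own proof'' to compare against. Your instinct to simply cite Berge is exactly what the authors do.

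Your reconstruction is largely sound---the two outer product bounds, weak duality, and the multiplicativity $\tau^*(H_1\times H_2)=\tau^*(H_1)\tau^*(H_2)$ via tensoring optimal fractional transversals and fractional matchings are all correct as you describe them. The one place your sketch goes astray is the mixed bound $\mu(H_1\times H_2)\le\tau^*(H_1)\mu(H_2)$. Partitioning a matching $M$ of $H_1\times H_2$ by its $H_2$-edge does give, within each class, a matching in $H_1$; but that only bounds each class by $\mu(H_1)$, not $\tau^*(H_1)$, and you have no control on the number of distinct $H_2$-edges appearing. The standard argument runs the other way: take an optimal fractional transversal $y$ of $H_1$, and for each vertex $v_1\in V(H_1)$ note that the edges of $M$ whose first factor contains $v_1$ must have pairwise disjoint second factors (since they already meet in the first coordinate), hence there are at most $\mu(H_2)$ of them. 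Then
\[
|M|\ \le\ \sum_{e_1\times e_2\in M}\ \sum_{v_1\in e_1} y_{v_1}
\ =\ \sum_{v_1} y_{v_1}\,\bigl|\{E\in M: v_1\in \text{first factor of }E\}\bigr|
\ \le\ \mu(H_2)\sum_{v_1} y_{v_1}
\ =\ \tau^*(H_1)\mu(H_2).
\]
The companion bound $\tau^*(H_1)\tau(H_2)\le\tau(H_1\times H_2)$ is proved by the dual trick: weight by an optimal fractional matching $w$ of $H_1$ and observe that for each edge $e_1\in E(H_1)$ the projection $\{v_2:\exists\,v_1\in e_1,\ (v_1,v_2)\in T\}$ of a transversal $T$ is a transversal of $H_2$. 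With that correction your outline would constitute a complete proof.
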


 Using the hypergraphs $ H_1= \mathcal F_{G_1}$ and $ H_2= \mathcal F_{G_2}$ and taking into account Observation \ref{obs.fteqmatch} and Theorem \ref{thm.fortz}, the following string of inequalities is given by Theorem \ref{thm.berge}.

\begin{theorem}\label{bergetoZ}
For two graphs, $G_1$ and $G_2$,
\[ \ft(G_1)\ft(G_2) \le \mu(\mathcal F_{G_1} \times \mathcal F_{G_2}) \le\Z^*(G_1) \ft(G_2) \le \Z^*(G_1) \Z^*(G_2)\] \[ =  \tau^*(\mathcal F_{G_1} \times \mathcal F_{G_2}) \le  \Z^*(G_1) \Z(G_2) \le \tau(\mathcal F_{G_1} \times \mathcal F_{G_2}) \le \Z(G_1) \Z(G_2),\]
where $\mathcal F_{G_1}$ and $\mathcal F_{G_2}$ are the hypergraphs of minimal forts of $G_1$ and $G_2$ respectively.
\end{theorem}

The following corollary is an immediate consequence of Theorem \ref{bergetoZ}. 

\begin{corollary}\label{carttocomp}
    Let $G_1$ and $G_2$ be graphs.  
    \begin{enumerate}[$1.$]
        \item If $\mu(\mathcal F_{G_1} \times \mathcal F_{G_2})=\tau^*(\mathcal F_{G_1} \times \mathcal F_{G_2})$, then $\ft(G_2)=\Z^*(G_2)$.
        \item If $\mu(\mathcal F_{G_1} \times \mathcal F_{G_2})=\tau(\mathcal F_{G_1} \times \mathcal F_{G_2})$, then $\ft(G_2)=\Z^*(G_2)=\Z(G_2)$.
    \end{enumerate}
\end{corollary}

%(See also Corollary \ref{carttocomptocart}.)

%\begin{theorem} {\rm \cite[Theorem 3.15]{berge1984hypergraphs}}\label{sandwichproduct} For two hypergraphs, $H_1$ and $H_2$,\[ \mu(H_1)\mu(H_2) \le \mu(H_1 \times H_2) \le \tau(H_1 \times H_2) \le \tau(H_1) \tau(H_2) \]\end{theorem}

 Since $H_1\x H_2\cong H_2\x H_1$, the roles of $G_1$ and $G_2$ can be reversed in Theorems \ref{thm.berge} and \ref{bergetoZ}.  Thus Corollary \ref{carttocomp} remains true with $G_2$ replaced by $G_1$  in the conclusions.
The next result is immediate from  Theorem \ref{bergetoZ}  %~\ref{thm.berge} 
 and  Corollary \ref{cor:fortcart}, %~\ref{prop:cart_tens_prod},} 
since  $\tau^*(\F_G)=\Z^*(G)$ and $\tau(\F_G)=\Z(G)$ for every graph $G$.
\begin{corollary}\label{cor:cart_prod}
For two graphs $G$ and $G'$,
\[ \Z^*(G)\Z(G') \le \Z(G \Box G'). \]
%\[ \ft(G)\ft(G') \le \Z^*(G) \ft(G') \le \Z^*(G) \Z^*(G') \le \Z^*(G) \Z(G') \le \Z(G \square G')\]
\end{corollary}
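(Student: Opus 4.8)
The plan is to obtain $\Z^*(G)\Z(G') \le \Z(G\Box G')$ by combining the hypergraph inequality in Theorem~\ref{thm.berge} with the transversal bound in Proposition~\ref{prop:cart_tens_prod}. Applying Theorem~\ref{thm.berge} to the hypergraphs $H_1 = \F_G$ and $H_2 = \F_{G'}$, I would read off the middle portion of the displayed chain of inequalities, namely $\tau^*(\F_G)\,\tau(\F_{G'}) \le \tau(\F_G \times \F_{G'})$. (More precisely, the chain gives $\tau^*(H_1)\tau(H_2) \le \tau(H_1\times H_2)$, which is exactly what is needed.)

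Next I would translate the two outer terms into zero forcing language. By Theorem~\ref{thm.fortz}, $\tau(\F_{G'}) = \Z(G')$, and by Definition~\ref{d:fracZ}, $\tau^*(\F_G) = \Z^*(G)$. Hence $\tau^*(\F_G)\,\tau(\F_{G'}) = \Z^*(G)\,\Z(G')$, so the inequality becomes
\[
\Z^*(G)\,\Z(G') \le \tau(\F_G \times \F_{G'}).
\]
Finally, Proposition~\ref{prop:cart_tens_prod} supplies $\tau(\F_G \times \F_{G'}) \le \Z(G\Box G')$, and chaining these two inequalities yields $\Z^*(G)\,\Z(G') \le \Z(G\Box G')$, as claimed.

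There is essentially no obstacle here: the statement is a one-line corollary once the two cited results are in hand. The only point that requires a moment of care is making sure the correct link of the long chain in Theorem~\ref{thm.berge} is invoked — the chain contains the equality $\tau^*(H_1)\tau^*(H_2) = \tau^*(H_1\times H_2)$ and several other bounds, and one must pick out the segment ending in $\tau(H_1\times H_2)$ rather than, say, the fractional or matching segments. A secondary bookkeeping point is simply confirming that the orientation matches, i.e., that it is $\F_{G'}$ (not $\F_G$) whose ordinary transversal number appears, which is immediate from how the factors were labeled. No new ideas or computations are needed.
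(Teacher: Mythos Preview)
Your proposal is correct and follows exactly the approach the paper intends: the paper states that the corollary is immediate from Theorem~\ref{thm.berge} and Proposition~\ref{prop:cart_tens_prod} together with the identifications $\tau^*(\F_G)=\Z^*(G)$ and $\tau(\F_{G'})=\Z(G')$, and you have spelled out precisely that chain.
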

%Using , we have\[\tau^*(\F_{G}) \tau(\F_{G'}) \le \tau(\F_{G} \times \F_{G'}) \le \Z(G \Box G'),\]or more simply, in the context of fractional zero forcing\[ \Z^*(G)Z(G') \le \Z(G \Box G'). \]Therefore, we have the following corollary.
Note that Corollary~\ref{cor:cart_prod} implies that if $\Z^{*}(G)=\Z(G)$, then
\[
 \Z(G)\Z(G')\le \Z(G\Box G').
\]
Families of graphs where $\Z^{*}(G)=\Z(G)$ are discussed in Section~\ref{subsec:zstar_eq_z}.
For now, we note that we can strengthen the bound given above  by using the next lemma. %Lemma~\ref{lem.crossfortplusone}.

\begin{lemma} \label{lem.crossfortplusone}
 Let $G$ and $G'$ be graphs each containing an edge. Then $\Z(G \Box G') \geq \tau(\F_G \x \F_{G'})+1$.
\end{lemma}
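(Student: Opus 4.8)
The plan is to show that a transversal of $\F_G \times \F_{G'}$ of minimum size can never be a zero forcing set of $G \Box G'$ when both $G$ and $G'$ have an edge, so that $\Z(G\Box G')$ strictly exceeds $\tau(\F_G \times \F_{G'})$. By Proposition~\ref{prop:cart_tens_prod} we already know every zero forcing set of $G\Box G'$ is a transversal of $\F_G \times \F_{G'}$, which gives $\Z(G\Box G') \geq \tau(\F_G \times \F_{G'})$; the content of the lemma is the strict inequality. So I would argue as follows: suppose for contradiction that some transversal $T$ of $\F_G\times\F_{G'}$ with $|T| = \tau(\F_G\times\F_{G'})$ is also a zero forcing set of $G\Box G'$. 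The goal is to locate a single vertex $x\in T$ such that $T\setminus\{x\}$ still meets every fort of the form $F_G\times F_{G'}$ but (crucially) $T$ itself was already the minimum size transversal, so $T\setminus\{x\}$ would be a smaller transversal — contradiction. The real task is therefore combinatorial: exploit the product structure of the forts to show a minimum transversal always has a ``wasteful'' vertex when $G$ and $G'$ each have an edge.

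Here is the mechanism I would use. Since $G$ has an edge $uv$, the set $\{u,v\}$ need not itself be a minimal fort, but $G$ does have at least one minimal fort $F_G$, and similarly $G'$ has a minimal fort $F_{G'}$; then $F_G\times F_{G'}$ is an edge of $\F_G\times\F_{G'}$, and more generally every edge of $\F_G\times\F_{G'}$ has the rectangular form $A\times B$. The key structural fact is that a rectangle $A\times B$ is ``covered'' by $T$ iff $T\cap(A\times B)\neq\emptyset$. I would consider the projections: for each minimal fort $A$ of $G$, the slices $A\times\{b\}$ for $b$ ranging over a minimal fort $B$ of $G'$ must each be hit, but in fact it suffices that $T$ hits $A\times B$ somewhere. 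The plan is to show that if $T$ is a zero forcing set, then in fact $T$ restricted to each ``layer'' $V(G)\times\{b\}$ and each ``layer'' $\{a\}\times V(G')$ must be large enough that $T$ overshoots a transversal of $\F_G\times\F_{G'}$. Concretely, one natural route: pick any vertex $x_0=(a_0,b_0)\in T$; I want to argue $T\setminus\{x_0\}$ is still a transversal. A rectangle $A\times B$ with $(a_0,b_0)\in A\times B$ fails to be hit by $T\setminus\{x_0\}$ only if $T\cap (A\times B) = \{x_0\}$. So I need: there exists $x_0\in T$ such that no minimal-fort rectangle $A\times B$ has $T\cap(A\times B)=\{x_0\}$. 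Equivalently, every vertex of $T$ is ``critical'' for some rectangle; this is exactly the statement that $T$ is a minimal (irredundant) transversal, which a minimum transversal always is. So the naive contradiction does not come for free — I must use the zero forcing hypothesis to find the slack.

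The way to inject the zero forcing hypothesis is the last-force argument, analogous to standard proofs that $\Z(G\Box G')$ exceeds trivial lower bounds. I would run the forcing process from $T$ in $G\Box G'$; let $x^* = (a^*, b^*)$ be the \emph{last} vertex to be forced. Just before $x^*$ is forced, its unique unfilled neighbor position tells us something about the column $\{a^*\}\times V(G')$ and the row $V(G)\times\{b^*\}$. In particular, I expect to show that $x^*$'s column and row are ``almost full'' in $T$ in a way that produces redundancy. Alternatively — and this is probably cleaner — I would use the characterization via forts directly: if $T$ is a zero forcing set, then by Theorem~\ref{fortsiffzfs}, $T$ intersects \emph{every} fort of $G\Box G'$, not merely the product forts. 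Since $G$ and $G'$ each have an edge, $G\Box G'$ contains a $4$-cycle, hence has ``extra'' forts not of product form (e.g. forts supported on such a $4$-cycle region or coming from the structure established in Theorem~\ref{Thm1}/Corollary~\ref{Cor1}), and hitting those extra forts forces $|T| > \tau(\F_G\times\F_{G'})$. The hard part will be making ``$G\Box G'$ has a fort not hit by any minimum transversal of $\F_G\times\F_{G'}$'' precise: I would need to produce an explicit such fort from an edge of $G$ and an edge of $G'$ and verify, using the twin/fort lemmas (Lemma~\ref{l:F2} and the fort definition), that any transversal of $\F_G\times\F_{G'}$ of minimum size misses it, or else derive the strict inequality by a direct counting argument on a well-chosen family of product forts whose pairwise intersections force an extra vertex. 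This structural lemma — exhibiting the ``$+1$'' witness fort and proving minimum product-transversals miss it — is the crux, and everything else is routine bookkeeping with the product of hypergraphs.
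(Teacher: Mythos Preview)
Your high-level framing is exactly right: show that no minimum transversal $T$ of $\F_G\times\F_{G'}$ can be a zero forcing set of $G\Box G'$, equivalently (via Theorem~\ref{fortsiffzfs}) exhibit a fort of $G\Box G'$ disjoint from $T$. But the execution goes astray in two places.

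First, you are hunting for a \emph{universal} ``extra'' fort --- one built from an edge of $G$ and an edge of $G'$, or from a $4$-cycle --- that every minimum transversal misses. No such universal witness exists in general; different minimum transversals miss different forts. The paper instead constructs, for \emph{each} minimum transversal $B$, a fort depending on $B$: namely $V(C)\setminus B$, where $C$ is a nontrivial component of $G\Box G'$. This set is automatically disjoint from $B$, so the entire content of the proof is verifying that it really is a fort, i.e.\ that every vertex of $B\cap V(C)$ has at least two neighbours in $V(C)\setminus B$.

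Second, you never arrive at the combinatorial mechanism that delivers those two neighbours, and your suggested routes (last-force, $4$-cycle forts) do not supply it. The paper's argument is this: given $(g,h)\in B$, enumerate $N_G(g)=\{g_1,\dots,g_m\}$. If $(g_1,h),\dots,(g_k,h)$ are all in $B$, then minimality of $B$ gives, for each $i\le k$, a product fort hit by $B$ only at $(g_i,h)$; its $G$-factor is a minimal fort $F_i$ of $G$ containing $g_i$ but none of $g,g_1,\dots,g_{i-1}$ (since those all lie in $B$ in the same $h$-row). If \emph{all} $m$ neighbours were in $B$, one obtains a fort $F_m$ of $G$ with $g\notin F_m$ and $N_G(g)\cap F_m=\{g_m\}$, contradicting the fort property. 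Hence some $(g',h)\notin B$; symmetrically some $(g,h')\notin B$. Those are the two required neighbours, and this is the idea your outline is missing.
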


 \begin{proof}   Suppose $B$ is a minimum transversal of $\F_G \x \F_{G'}$, i.e., $B$  is a subset of $V(G \Box G')$ of minimum size such that  $B \cap (F \times F') \neq \emptyset$ for each  $F \in \mathcal F_G$ and $F' \in \mathcal F_{G'}$.
Let $C_G$ be a nontrivial component of $G$, $C_{G'}$ be a nontrivial component of $G'$, and  let $C=C_G\cp C_{G'}$, which is a component of $G \cp G'$.  Now let $(g,g') \in B \cap V(C)$.  We will show that there exists $x \in N_G(g)$ such that $(x,g') \not \in B$.  Once this is done, an identical argument will show that there exists $x' \in N_{G'}(g')$ such that $(g,x') \not \in B$, and thus that every vertex in $B \cap V(C)$ has two neighbors in $V(C) \setminus B$.  Since $B$ is an arbitrary minimum transversal of $\mathcal F_G \times \mathcal F_{G'}$ and $V(C)\setminus B$ is a fort of $G \Box G'$, this completes the proof.  

 Let $\{g_i\}_{i=1}^m$ be an enumeration of $N_G(g)$ and let $g_0=g$; note that $m\ge 1$ since $C_G$ is nontrivial.  If $(g_1,g') \not \in B$, then we are done.  Otherwise, since $B$ is of minimum size there exists a fort $F_1$ of $G$  and a fort $F'$ of $G'$ such that  $(g_1,g') \in F_1\x F'$ and $(g_0,g') \not \in F_1\x F'$, i.e.,  $g_1 \in F_1$ and $g_0 \not \in F_1$.  Let $k$ be such that $1 \leq k \leq m-2$ and suppose that $\{(g_i,g')\}_{i=1}^k \subseteq B$ and there exists a set of forts $\{F_i\}_{i=1}^k$ of $G$ such that for each $F_i$ we have $g_i \in F_i$ and $g_j\not\in  F_i $ for $j=0,\dots,i-1$.  If $(g_{k+1},g') \not \in B$, then we are done.  Otherwise, since $B$ is of minimum size there exists a fort $F_{k+1}$ of $G$ such that $g_{k+1} \in F_{k+1}$ and $g_i\not\in  F_{k+1} $ for $i=0,\dots,k$.  So we may suppose that $\{(g_i,g')\}_{i=1}^{m-1} \subseteq B$ and there exists a set of forts $\{F_i\}_{i=1}^{m-1}$ of $G$ such that for each $F_i$ we have $g_i \in F_i$ and $\{g_j\}_{j=0}^{i-1} \cap F_i = \emptyset$.  If $ (g_m,g') \not \in B$, then we are done.  So suppose, by way of contradiction, that $(g_m,g') \in B$.  Then since $B$ is of minimum size it follows that there exists a fort $F_m$ of $G$ such that $g_m \in F_m$ and $g_i\not\in  F_{m} $ for $i=0,\dots,m-1$.  This is a contradiction because  it implies that $\abs{N_G(g) \cap F_m} =1$.  
Thus it follows that at least one member of $\{(g_i,g')\}_{i=1}^m$ is a member of $V(C) \setminus B$.
\end{proof}

The bound in Lemma \ref{lem.crossfortplusone} is sharp as seen by the next example.

\begin{example}
Consider the infinite class of graphs $G=K_r \Box P_s$, for $r, s \geq 2$.  Since each pair of vertices in $K_r$ is a fort, if $B \cap F\ne \emptyset$ for each fort $F \in \mathcal F_{K_r} \times \mathcal F_{P_s}$, then $\abs{B} \geq r-1$ and so
\[r \leq \abs{B}+1 \leq \Z(K_r \Box P_s)=r,\]
 where the last equality was established  in \cite{aim}. \end{example}

The bound in Lemma \ref{lem.crossfortplusone} does not always have equality, and, as is witnessed by the next example, the gap grows arbitrarily large.
\begin{example}
 Consider $P_r \Box P_r$, for $r \geq 3$.  Since every fort of $P_r$ contains the endpoints of $P_r$, for $V(P_r)=\{1,2,\dots,r\}$ enumerated in path order, choosing $B=\{(1,1)\}$ provides a  transversal of $\mathcal F_{P_r} \times \mathcal F_{P_r}$.  Thus, $ \tau(\F_{P_r} \Box \F_{P_r})+1=2$, but  $3 \leq r=\Z(P_r \Box P_r)$, where the last equality was established  in \cite{aim}. 
\end{example}

%$Z(C_3 \Box C_3) = 5$.

%\textcolor{red}{Here is my proposed fix for Proposition \ref{thm.zequalzstartrue}.  I have commented out the previous proof but have not deleted it.}

\begin{theorem}
\label{thm.zequalzstartrue}
Let $G$ and $G'$ be graphs each containing an edge, and suppose $\Z(G') = \Z^*(G')$. Then
\[ \Z(G \Box G') \ge \Z(G)\Z(G') + 1.\]
In particular, Conjecture \ref{con:main} holds whenever one of the two graphs has its zero forcing number equal to its fractional zero forcing number. 
\end{theorem}

\begin{proof}  
 Let $\F_G$ and $\F_{G'}$ be the hypergraphs of forts for $G$ and $G'$ respectively. Since $G$ and $G'$ each contain nontrivial components,   from  Lemma \ref{lem.crossfortplusone}  and   Theorem \ref{bergetoZ}, it follows that
\[\Z(G \Box G') \geq \tau(\F_G \times \F_{G'})+1 \geq \tau(\F_G)\tau^*(\F_{G'})+1=\Z(G)\Z^*({G'})+1=\Z(G)\Z({G'})+1. \qedhere\]
\end{proof}

  Table \ref{table2} in the next section lists families of graphs $G'$ for which $\Z(G') = \Z^*(G')$, to which Theorem \ref{thm.zequalzstartrue} applies.
The following corollary is an immediate consequence of Theorem \ref{thm.zequalzstartrue} and Corollary \ref{carttocomp}.
\begin{corollary}\label{carttocomptocart}
    If there exist graphs $G_1$ and $G_2$, with $G_2$ containing an edge, such that $\mu(\mathcal F_{G_1} \times \mathcal F_{G_2})=\tau(\mathcal F_{G_1} \times \mathcal F_{G_2})$, then for any graph $G_3$ containing an edge, $\Z(G_2 \Box G_3) \geq \Z(G_2)\Z(G_3)+1$.
\end{corollary}

\subsection{Graphs for which $\Z^{*}(G) = \Z(G)$}\label{subsec:zstar_eq_z}

As shown in Examples \ref{ex:path}, \ref{ex:cycle},  and \ref{ex:double_foliation}, paths, even cycles, and double foliations $H\circ 2K_1$ all satisfy $\Z^{*}(G) = \Z(G)$.  We can generalize the latter two examples; to do so we need to recall some definitions.   The \emph{subdivision} of  edge $e=uv$ in a graph $G$, denoted by $G_e$, 
is the graph obtained from $G$ by adding a new vertex $w$, adding two edges $uw$ and $vw$ and deleting the edge $e$.

\begin{example}  \label{ex:double-leaf-subdiv}
    Let $G'$ be a graph of order $r\ge 2$ with $V(G')=\{u_1,\dots,u_r\}$.  Denote the two leaf vertices adjacent to $u_k$  in $G'\circ 2K_1$ by $x_k$ and $y_k$.    Construct $G$ from $G'\circ 2K_1$ by subdividing edge $x_ku_k$ as many times as desired (including none) and denote the resulting additional vertices (if any) in path order by $x_k^1,\dots,x_k^{i_k}$, and similarly subdividing edge $y_ku_k$, for $k=1,\dots,r$.  
    %constructed from any graph $G$  by edge-leaf subdivision of $G'\circ 2K_1$ as in Example \ref{ex:double-leaf-subdiv}.
    Since $\{x_1,\dots,x_r\}$ is a zero forcing set  of $G$, $\Z(G)\le r$.  Since we have disjoint forts $F_k=\{x_k,x_k^1,\dots,x_k^{i_k},y_k,y_k^1,\dots,y_k^{j_k}\}$ for $k=1,\dots,r$, $\ft(G)\ge r$ and $\Z(G)=\Z^*(G)=\ft(G)$.  Let $\LL$ denote the family of graphs just defined. %\LH{needed for use in table}
\end{example}

We now give a characterization for when trees have $\Z(G) = \Z^*(G)$.
Recall a \emph{path cover} of a graph $G$ is a  set of vertex-disjoint paths of $G$ such that every vertex is in one of the paths and each path is an induced path in $G$; the \emph{path cover number} $\PC(G)$ is the minimum  size of  a  path-cover of $G$. It is well-known that $\Z(G)\ge \PC(G)$ and $\PC(G)\ge\frac {\ell(G)} 2$ where $\ell(G)$ is the number of leaves in $G$. For trees, however, $\Z(T) = \PC(T)$. 

Let us define $\T$ to be the family of trees such that that there is a minimum path cover $\PP$ that satisfies the following two conditions:
 \ben[(a)]
 \item\label{LLconda}  every path in $\PP$ covers two leaves of $T$ 
 \item\label{LLcondb}  for any three paths $P_{(1)}$, $P_{(2)}, P_{(3)} \in \PP$, there are no adjacent vertices $u, v \in V(P_{(1)})$ such that $u$ is adjacent to a vertex in $P_{(2)}$ and $v$ is adjacent to a vertex in $P_{(3)}$. %If $P_1, P_2,P_3\in \PP$, $v_{12},  v_{1,3}\in V(P_1)$, $v_{12}\ne  v_{1,3}$, $v_{2}\in V(P_2)$, and $v_{2}\in V(P_2)$, then $V_{12}$ and $v_{13}$ are not adjacent.  %No path within containing two adjacent vertices adjacent to vertices of other paths.
 \een

% \begin{itemize}\item Every path of $\PP$ covers two leaves of $T$ and\item For any three paths $P_{(1)}$, $P_{(2)}, P_{(3)} \in \PP$, there are no adjacent vertices $u, v \in V(P_{(1)})$ such that $u$ is adjacent to a vertex in $P_{(2)}$ and $v$ is adjacent to a vertex in $P_{(3)}$. \end{itemize}

\begin{lemma} \label{lem:twoleaves}
For a tree $T$, every fort contains two leaf vertices.
\end{lemma}

 \begin{proof}
    It suffices to prove the following statement: Let $T$ be a tree. Then, every set of all but at most one leaf of $T$ is a zero forcing set of $T$. 
    We proceed via strong induction on the order $n$ of the tree. 
    The base case, $n=2$, is clear since the only fort of $P_{2}$ is $V(P_{2})$.
    For the induction step, let $n\geq 2$, and assume that the result holds for all trees of order between $2$ and $n$.
    Let $T$ be a tree of order $n+1$ and let $S$ be a set of all but at most one leaf of $T$.
    Suppose that each vertex in $S$ is filled and each vertex in $V(T)\setminus{S}$ is not filled. 
    Since each vertex in $S$ is a leaf, they each force a vertex in $V(T)\setminus{S}$.
    After these forcings are applied, $T-S$ is a tree of order between $2$ and $n$ such that all but at most one leaf is filled. 
    Hence, the induction hypothesis implies that all vertices in $T-S$ can be forced and it follows that $S$ is a zero forcing set of $T$. 
\end{proof}

\begin{lemma} For any tree $T$, $\Z^{*}(T)\le\frac{\ell(T)}{2}\leq\Z(T)$.
\end{lemma}
\bpf
By Lemma \ref{lem:twoleaves} every fort of $T$ must contain at least two leaves, so    
 $\Z^*(T) \le \frac{\ell(T)}{2}$. 
  Since each path in a path cover covers at most two leaves and  $Z(T) = \PC(T)$, it follows that  $\frac{\ell(T)}{2}\le\Z(T)$.
\epf
\begin{theorem} \label{thm:LL}
For any tree $T$,
$\Z^*(T) = \Z(T)$ if and only if $T \in \T$, in which case, $\ft(T) = \Z^*(T) = \Z(T)=\frac{\ell(T)}2.$
\end{theorem}

\bpf 
Suppose first that $T\in \T$ and let $\PP=\{P_{(1)},\dots,P_{\left(\frac {\ell(T)} 2\right)}\}$ be a path cover with the specified properties.  Then $\Z(T)=\PC(T)=\frac {\ell(T)} 2$.  Let $F_{(k)}$ be the vertices of $P_{(k)}$ of degree at most two.  Then $F_{(k)}$ is a fort by condition \eqref{LLcondb}, so $\ft(T)\ge \frac{\ell(T)} 2$.
Thus, $\Z(T)=\Z^*(T)= \ft(T) = \frac{\ell(T)}{2} $.

It remains to show that 
$T \not \in \T$ implies $\Z^*(T) < \Z(T)$.
 If there is a minimum path cover of $T$ that fails condition \eqref{LLconda}, then $\PC(T) > \frac{{\ell(T)}}{2}$, so $\Z(T) > \frac{\ell(T)}{2}\geq \Z^{*}(T)$ and we are done. So we  assume that every minimum path cover of $T$ satisfies 
condition \eqref{LLconda} but fails condition \eqref{LLcondb}.
Let $\PP=\{P_{(1)},\dots,P_{\left(\frac {\ell(T)} 2\right)}\}$ be a minimum path cover. Without loss of generality, let 
$P_{(1)} \in \PP$ be a path that has two adjacent vertices adjacent to other paths in $\PP$ (i.e., $P_{(1)}$ violates condition \eqref{LLcondb}).

Next we show that every fort of $T$ contains two leaves belonging to the same path of $\PP$.
Note that for a tree $T$,  a minimum zero forcing set can be constructed by arbitrarily choosing either end of the minimum path cover (see   \cite%[Theorem 4.2]
{aim}). By the contrapositive, if a set of vertices is not a zero forcing set, at least one path in $\PP$ must have both its leaves unfilled. Hence, every fort of $T$ contains two leaves from the same path in $\PP$.

Finally, we show that any fort containing the two leaves of $P_{(1)}$ contains another leaf.
Suppose for a contradiction that all other leaves are filled. Then, all those vertices will force until they reach $P_{(1)}$ by forcing two adjacent vertices of $P_{(1)}$. Those two adjacent vertices will force the rest of $P_{(1)}$ and hence $T$. Therefore, there can be no fort of $T$ that excludes all leaves not contained in $P_{(1)}$.   

Together, we have that every fort of $T$ either
contains two leaves of a path that is not $P_{(1)}$ or it contains both leaves of $P_{(1)}$ and at least one additional leaf. Consider the  following weighting on the vertices: $x_v = 0$ if $v$ is not a leaf, $x_v = \frac14$ if $v$ is a leaf in $P_{(1)}$, and $x_v = \frac12$ if $v$ is a leaf not in $P_{(1)}$. 
For either case for forts, this  weighting provides that each fort has a weight sum of at least 1 with a total weight over all vertices of $\frac{\ell(T)}{2} - \frac{1}{2} < \frac{\ell(T)}{2}$.
\epf

 A graph $G$ is  a \emph{graph of two parallel paths} if $\PC(G)=2$ and the graph can be drawn in the plane in such a way that the paths are
parallel line segments, the edges  between the two paths do not cross, and each edge is drawn as a straight line segment.  A graph that consists of two connected components, each of which is a path, is a graph of two parallel paths, but a single path is not. A \emph{polygonal path} is a graph that can be constructed by starting with a cycle and adding one cycle at a time by identifying an edge of a new cycle with an edge of the most recently added cycle that has a vertex of degree 2.  Every polygonal path is a graph on two parallel paths but not conversely.  The next example describes a family of graphs $G$ on two parallel paths that have $\Z(G)=\Z^*(G)$, including polygonal paths of even order.

\begin{example}\label{ex:2parpath-evencycle} It is well-known that a graph $G$ has $\Z(G)=2$ if and only if $G$  is a graph on two parallel paths \cite{DDR},  \cite[Theorem 9.12]{HogLinShad}. 
The \emph{maximum cycle length} of a graph $G$ that  is not a forest is the length of the longest cycle in $G$.  %Note that $G$ has a Hamilton path if and only if the maximum cycle length of $G$ equals the order of $G$.  
For a graph $G$ that is a graph on two parallel paths that has a cycle, $G$ has a unique cycle of maximum length; denote the vertices of this  cycle by $C(G)$. Let $\PAR$ be the set of graphs $G$ such that $G$ is a graph of two parallel paths and  $|C(G)|$ is even. We can see that every graph $G\in \PAR$ has $\ft(G)=2$, which implies that $\Z(G)=\Z^*(G)$:
%If $G$ consists of two disjoint paths, then $G$ clearly has two disjoint forts.  If $G$ is a tree, then $G\in\T$  so $\ft(G)=2$.  Now assume $G$ has {a} cycle and $G[C(G)]$ has even order. 
Denote the vertices of $C(G)$ by $u_1,\dots,u_{2k}$. Observe that $G$ can be obtained from $G[C(G)]$ by adding zero, one, two, three, or four pendent paths,  each from a distinct vertex of $C(G)$ (additional restrictions are needed to ensure $G$ is a graph on two parallel paths); 
 Examples 6, 7, and 8 in \cite{proptime} show graphs on two parallel paths that have cycle(s). Define $F_e$ to be the set consisting of  $W_e=\{u_2,u_4,\dots,u_{2k}\}$ and the vertices of any path pendent from a vertex in $W_e$, and define $F_o$ similarly.  Then $F_e$ and $F_o$ are disjoint forts.  
\end{example}

 The next table lists  families of graphs $G$ having $\Z(G)=\Z^*(G)$.  Note there is some duplication (e.g. an even cycle is an even polygonal path, which is also in $\PAR$), but the results are displayed this way for ease of use.

\begin{table}[h!]
\renewcommand{\arraystretch}{1.3}
\begin{center}
\noindent \begin{tabular}{| l | c | c| c| }
\hline
result \#&  $G$ & order &  $\Z^*(G)=Z(G)$\\
\hline
\ref{ex:path} & $P_n$  & $n$& $1$  \\
\hline
\ref{ex:cycle} & $C_{2k}$   & $2k$& $2$  \\
 \hline
\ref{ex:double_foliation} & $G'\circ 2K_1$  & $3|V(G')|$& $|V(G')|$  \\
 \hline
\ref{ex:double-leaf-subdiv} & $G\in\LL$  & & $\frac{\ell(G)} 2$   \\
 \hline
\ref{thm:LL} & $T\in\T$  & & $\frac{\ell(T)} 2$ 
\\
\hline
\ref{ex:2parpath-evencycle} & polygonal path, even order  & $2k$ & $2$  \\
\hline
\ref{ex:2parpath-evencycle} & $G\in\PAR$  &  & $2$  \\\hline
\end{tabular}
\caption{Graphs $G$ for which $\Z(G)=\Z^*(G)$. }\label{table2}
\end{center}
\end{table}

 It is worth noting that in each example above where $\Z^{*}(G)=\Z(G)$ it is also the case that $\ft(G)=\Z(G)$.  In particular, $\Z^{*}(T)=\Z(T)$ implies $\ft(T)=\Z(T)$ whenever $T$ is a tree.  This  motivates the following question.
\begin{question}\label{fort_question}
Is it the case that $\Z^*(G) = \Z(G)$ if and only if $\ft(G)=\Z(G)$ for all graphs $G$?
\end{question}

By Theorem \ref{thm:LL}, Question \ref{fort_question} is  answered in the affirmative for trees.
For  arbitrary graphs $G$, one direction of Question~\ref{fort_question} follows from the inequality in~\eqref{eq:ftz-ineq}, that is, if $\ft(G)=\Z(G)$ then $\Z^{*}(G)=\Z(G)$.
For the other direction, one might try to show that $\Z^{*}(G) \leq \frac{1}{2}\left(\ft(G) + \Z(G)\right)$. 
If all minimal forts of $G$ have size $2$ (see Examples~\ref{ex:double_foliation} and~\ref{ex:triangles}), then it is true that $\Z^{*}(G) \leq \frac{1}{2}\left(\ft(G) + \Z(G)\right)$ 
because in this case the hypergraph of minimal forts $\F_{G}$ is a simple graph and Theorem 3 in Chapter 3 of~\cite{BergeHypergraphs} implies that
\[
\tau^{*}(\F_{G}) \leq \frac{1}{2}\left(\mu(\F_{G}) + \tau(\F_{G})\right)
\]
and the result follows from Theorem~\ref{thm.fortz} and Observation~\ref{obs.fteqmatch}.  Unfortunately, $\Z^{*}(G) \leq \frac{1}{2}\left(\ft(G) + \Z(G)\right)$ is not true for all $G$.
Indeed, Example~\ref{ex:cycle} demonstrates that $\Z^{*}(C_{2k+1}) = \frac{2k+1}{k+1}$ and $\ft(C_{2k+1}) = 1$. 
Thus, 
\[
\Z^{*}(C_{2k+1})=\frac{2k+1}{k+1} > \frac{3}{2}=\frac{1}{2}(\ft(C_{2k+1})+\Z(C_{2k+1})),
\]
for $k\geq 2$.

%%%%%%%%%%%%%%%%%%%%%%%%%%%%%%%%%%%%%%%%%%%

\subsection{Bounds on $\ft(G\Box G')$ and $\ft^*(G\Box G')=\Z^*(G\Box G')$}\label{subsec:fort-cart-prod-bds}
 
  In this section we establish Vizing-like bounds for the fort number and fractional zero forcing  number of Cartesian products.
 
 \begin{proposition}
\label{prop:cart_tens_prod_fractional}
 For all graphs $G$ and $G'$,
\[  \Z^{*}(G)\, \Z^*(G') \le \Z^{*}(G \Box G').\]
%nd this bound is sharp.
\end{proposition}
\begin{proof}
By Corollary \ref{cor:fortcart} we have  
$ \tau^{*}(\F_{G} \times \F_{G'}) \le \Z^{*}(G \Box G').$
On the other hand, from Theorem \ref{thm.berge}, 
$\tau^*(\F_{G}) \tau^*(\F_{G'})=\tau^{*}(\F_{G} \times \F_{G'}),$ which completes the proof of the inequality because $\tau^*(\F_G)=\Z^*(G)$. 
\end{proof}

 The next proposition %\ref{ex-sharp-bound} 
 shows the bound is sharp.  Define $G_m=K_{2m} \Box K_{2m}$ for $m \geq 2$.
 
 \begin{proposition}
    \label{ex-sharp-bound}
   For $m\ge 2$, any fort of $G_m$ contains at least $4$ vertices, and \[\Z^*(G_m)=\Z^*(K_{2m})\Z^*(K_{2m})= m^2.\]
    \end{proposition}
    \bpf
First note that $\ft(K_{2m})=m$  (see Example \ref{ex:complete}). We  show that 
any fort of $G_m$ contains at least 4 vertices, which implies 
\[m^2=\Z^*(K_{2m}) \Z^*(K_{2m})\le \Z^*(G_m)\leq \frac{|V(\mathcal F_{G_m})|}{4} \le \frac{|V(G_m)|}{4}=m^2,\] 
establishing the equality.  

Let $V(G_m)=\{(i,j):1 \leq i,j \leq 2m\}$ and let  $F$ be a fort of $G_m$.  Suppose first  that for some $i_0 \in \{1,2,\dots, 2m\}$, $F \cap \{(i_0,j)\}_{j=1}^{2m}=\{(i_0,j_0)\}$.  Then for each vertex $(i_0,j_1)$ with $j_1 \neq j_0$, $(i_0,j_1) \not \in F$. %\st{and $\abs{N_{G_m}((i_0,j_1)) \cap F} \geq 1$}. \LH{repeated later}
 Since $N_{G_m}((i_0,j_1))=\{(i_0,j)\}_{j \neq j_1} \cup \{(i,j_1)\}_{i \neq i_0}$ and $\abs{\{(i_0,j)\}_{j \neq j_1} \cap F}=1$, it follows that $\{(i,j_1)\}_{i \neq i_0} \cap F \neq \emptyset$.  Since $j_1$ was chosen arbitrarily, for each $j \neq j_0$, $\{(i,j)\}_{i \neq i_0} \cap F \neq \emptyset$, and thus $\abs{F} \geq 2m \geq 4$.   By   symmetry, $\abs{F \cap \{(i,j_0)\}_{i=1}^{2m}}= 1$ implies $\abs{F} \geq  4$.
 
 Now suppose   $\abs{F \cap \{(i_0,j)\}_{j=1}^{2m}}\neq 1$ for each $i \in \{1,2,\dots,2m\}$ and  
  $\abs{F \cap \{(i,j_0)\}_{i=1}^{2m}}\neq 1$  for each $j_0 \in \{1,2,\dots,2m\}$.  Since $F \neq \emptyset$,
  for some $i_1,i_2,i_3,j_1,j_2 \in \{1,2,\dots,2m\}$, with $ i_2 \neq i_1, \ i_3 \neq i_1 $, and $j_1 \neq j_2$, $\{(i_1,j_1), (i_1,j_2), (i_2,j_1), (i_3,j_2)\} \subseteq F$, and thus $\abs{F} \geq 4$.  So for any fort $F$ of $G_m$, we have that  $\abs{F} \geq 4$. 
\epf

 The gap in the bound in Proposition \ref{prop:cart_tens_prod_fractional}
can grow arbitrarily large, as seen in the next example.  
 \begin{example}\label{Vbd-ft-gap}
Consider $G=C_m \Box C_m$ with $m\ge 5$. Enumerate the vertices of each cycle by $0,1,\dots,m-1$ in cyclic order (so $ V(G)=\{(i,j):0 \leq i,j \leq m-1\}$), and perform arithmetic modulo $m$. For each $k$ with $0 \leq k \leq m-1$, define the \emph{$k$th diagonal} $D_k=\{(i,i+k):i=0,\dots,m-1\}$.  We see that each $D_k$ is a fort of $G$:  Fix $k$ and $i$ and consider the vertex $(i,j)$. For $j=i+k$, $(i,i+k)\in D_k$.  For $j=i+k+1$, $(i,i+k+1)\sim (i,i+k)$ and $(i,i+k+1)\sim (i+1,i+k+1)$, and $j=i+k-1$ is similar. For $j\ne i+k,i+k+1,i+k-1$, $N[(i,j)]\cap D_k=\emptyset$, so $D_k$ is a fort. 
For $k_1 \neq k_2$, $D_{k_1} \cap D_{k_2} =\emptyset$, so $\Z^*(G)\ge \ft(G) \geq m >4\ge \Z^*(C_m)\Z^*(C_m)$, with the last inequality following from  $\Z^*(C_m)\le\Z(C_m)=2$.  
\end{example}

%The next corollary \ref{boxfort}  follows from Proposition \ref{l:fort-prod} and Theorem \ref{sandwichproduct}:

Next we present a Vizing-like bound for fort number, which also follows from work of Anderson et al.~ in \cite{powdompart} as discussed below.
  \begin{corollary}
    \label{boxfort} %{\rm \cite{powdompart}}
Let $G$ and $G'$ be graphs.  Then $\ft(G) \cdot \ft(G') \leq \ft(G \Box G')$  and the bound is sharp.
\end{corollary}
\bpf  We have $\ft(G_1) \ft(G_2) = \mu(\F_{G_1})\mu(\F_{G_2}) \le \mu(\F_{G_1} \times \F_{G_2}) \le \ft(G_1 \Box G_2)$ where the first equality follows from Observation \ref{obs.fteqmatch}, the first inequality results from Theorem \ref{sandwichproduct} and the final inequality results from Proposition \ref{l:fort-prod}.  Proposition \ref{ex-sharp-bound} shows that the bound in Corollary~\ref{boxfort} is sharp  because 
$m^2=\ft(K_{2m})\ft(K_{2m}) \leq \ft(G_m) \leq \Z^*(G_m)=m^2$.

\epf

  Example \ref{Vbd-ft-gap} shows the gap can be arbitrarily large since $\ft(C_m)\le \Z^*(C_m)$.
 
 In \cite{powdompart}, Anderson et al.~proved results concerning failed zero forcing partitions and their values for Cartesian products of graphs.  These results can be interpreted in terms of the fort number, using %terms of the disjoint-fort number of a graph, one result can be written as follows. 
 definitions  introduced in \cite{powdompart}.  Let $G$ be a graph and let $\Pi=\{\Pi_i\}_{i=1}^k$ be a partition of the vertices of $G$ such that for each $i \in [k]$, $V(G)-\Pi_i$ cannot force $G$.  Then $\Pi$ is a \emph{failed zero forcing partition of $G$}.   Define $z_G$ to be the maximum number of sets in a failed zero forcing partition of $G$, i.e.,
\[z_G=\max\{j: \abs{\Pi}=j \text{ and } \Pi \text{ is a failed zero forcing partition of } G\}.\]
%\end{definition}
The next result  is a corollary of Theorem \ref{thm.fortchar}.

\begin{corollary}\label{zgftg}
Let $G$ be a graph.  Then $z_G=\ft(G)$.
\end{corollary}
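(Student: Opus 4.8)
The plan is to unwind the definitions and show that a partition $\Pi = \{\Pi_i\}_{i=1}^k$ is a failed zero forcing partition of $G$ if and only if each block $\Pi_i$ contains a fort, which by a disjointness/packing argument will exactly match Definition~\ref{fortnumber}. First I would recall the key equivalence from Theorem~\ref{fortsiffzfs}: a set $S$ fails to be a zero forcing set if and only if $S$ misses some fort, equivalently $V(G)\setminus S$ contains a fort. Applying this with $S = V(G)\setminus \Pi_i$ gives that $V(G)-\Pi_i$ cannot force $G$ if and only if $\Pi_i$ contains a fort of $G$. Hence a failed zero forcing partition of $G$ is precisely a partition of $V(G)$ into blocks each of which contains a fort.

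Next I would prove $z_G \le \ft(G)$: given a failed zero forcing partition $\Pi$ of size $z_G$, pick for each block $\Pi_i$ a fort $F_i \subseteq \Pi_i$. Since the blocks $\Pi_i$ are pairwise disjoint, the forts $F_i$ are pairwise disjoint, so $\{F_i\}$ is a collection of $z_G$ disjoint forts, witnessing $\ft(G) \ge z_G$. For the reverse inequality $\ft(G) \le z_G$, start from a collection $\{F_1,\dots,F_t\}$ of $t = \ft(G)$ pairwise disjoint forts. These forts need not partition $V(G)$, so I would absorb the leftover vertices: define $\Pi_i = F_i$ for $i < t$ and $\Pi_t = F_t \cup (V(G) \setminus \bigcup_{j=1}^t F_j)$. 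Each block still contains a fort, so $\Pi$ is a failed zero forcing partition of size $t$, giving $z_G \ge t = \ft(G)$. (One should note the harmless edge case: if $t=1$ we are simply taking the single block $\Pi_1 = V(G)$, and since $G$ has a fort — e.g. $V(G)$ itself is always a fort, or any minimal fort — this is a legitimate failed zero forcing partition of size $1$, consistent with $\ft(G)\ge 1$ and $z_G\ge 1$.)

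The only mild subtlety, and the step I would be most careful with, is the absorption argument: I must check that enlarging the last block $\Pi_t$ does not spoil the ``failed'' property. This is immediate since $\Pi_t \supseteq F_t$ and containing a fort is a monotone (upward-closed) condition on a block — if $F_t$ is a fort contained in $\Pi_t$, then $V(G)\setminus(V(G)\setminus\Pi_t)\supseteq F_t$ still misses... more directly, $V(G)-\Pi_t \subseteq V(G)-F_t$, and since $V(G)-F_t$ is not a zero forcing set (it misses the fort $F_t$), neither is its subset $V(G)-\Pi_t$. So both inequalities hold and $z_G = \ft(G)$.
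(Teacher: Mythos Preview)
Your proof is correct and follows essentially the same approach as the paper: both directions use Theorem~\ref{fortsiffzfs} to translate between ``$V(G)\setminus\Pi_i$ cannot force'' and ``$\Pi_i$ contains a fort,'' then pack forts into a partition for one inequality and extract disjoint forts from a partition for the other. Your absorption argument (dumping leftover vertices into the last block) is a specific instance of the paper's more loosely stated ``let $\Pi$ be a partition such that $F_i\subseteq\Pi_i$,'' and your extra care about monotonicity is fine but not strictly needed once one observes directly that $V(G)-\Pi_t$ misses the fort $F_t$.
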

\begin{proof}
Let $\mathcal F=\{F_i\}_{i=1}^{\ft(G)}$ be a collection of disjoint forts of $G$ and let $\Pi=\{\Pi_i\}_{i=1}^{\ft(G)}$ be a partition of $V(G)$ such that for each $i \in [\ft(G)]$, $F_i \subseteq \Pi_i$.  Since for each $\Pi_i \in \Pi$, $\Pi_i$ contains a fort, by Theorem~\ref{thm.fortchar}, $V-\Pi_i$  cannot force $G$. Thus $\Pi$ is a failed zero forcing partition of $G$, and in particular $z_G \geq \ft(G)$.

Next let $\Pi=\{\Pi_i\}_{i=1}^{z_G}$ be a failed zero forcing partition of $G$.  Since for each $i \in [z_G]$, $V(G)-\Pi_i$ does not zero force $G$, by Theorem \ref{fortsiffzfs}, for each $i \in [z_G]$, $\Pi_i$ contains a fort $F_i$.  Since $\Pi$ is a partition of $V(G)$, it follows that for each distinct $i,j \in [z_G]$, $F_i \cap F_j = \emptyset$.   Thus, $\{F_i\}_{i=1}^{z_G}$ is a collection of disjoint forts of $G$, and so $\ft(G) \geq z_G$. \end{proof}

Thus the  next result is equivalent to Corollary \ref{boxfort}.
\begin{theorem}\label{boxzg}{\rm \cite{powdompart}}
Let $G$ and {$G'$} be graphs.  Then $z_G \cdot z_{G'} \leq z_{G \Box {G'}}$. 
\end{theorem}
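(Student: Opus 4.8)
The plan is to route the whole statement through the fort hypergraph, turning it into an instance of the matching inequality of Theorem~\ref{thm.berge} combined with the fort‑product fact of Proposition~\ref{prop:cart_tens_prod}.

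\textbf{Step 1: reduce $z$ to $\ft$.} First I would record that $z_H = \ft(H)$ for every graph $H$ (this is Corollary~\ref{zgftg}). By Theorem~\ref{fortsiffzfs}, a set $S\subseteq V(H)$ fails to force $H$ precisely when $V(H)\setminus S$ contains a fort. Hence, if $\Pi=\{\Pi_1,\dots,\Pi_k\}$ is a failed zero forcing partition, each part $\Pi_i$ contains a fort $F_i$, and the $F_i$ are pairwise disjoint, so $\ft(H)\ge k$ and therefore $\ft(H)\ge z_H$. Conversely, given pairwise disjoint forts $F_1,\dots,F_k$ of $H$, set $\Pi_i = F_i$ for $i<k$ and $\Pi_k = F_k \cup \bigl(V(H)\setminus\bigcup_j F_j\bigr)$: each complement $V(H)\setminus\Pi_i$ is disjoint from the fort $F_i$, hence cannot force $H$, so $\Pi$ is a failed zero forcing partition with $k$ parts and $z_H\ge \ft(H)$. (A one‑part partition is always available since $V(H)$ is vacuously a fort, so $z_H\ge 1$.) Thus $z_H=\ft(H)$.

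\textbf{Step 2: chain the hypergraph inequalities.} By Observation~\ref{obs.fteqmatch}, $\ft(G)=\mu(\F_G)$ and $\ft(G')=\mu(\F_{G'})$. By the leftmost inequality of Theorem~\ref{thm.berge}, $\mu(\F_G)\,\mu(\F_{G'})\le \mu(\F_G\times\F_{G'})$. By Proposition~\ref{prop:cart_tens_prod}, every edge of $\F_G\times\F_{G'}$ — a product of a minimal fort of $G$ with one of $G'$ — is a fort of $G\Box G'$; hence a maximum matching of $\F_G\times\F_{G'}$ is a collection of $\mu(\F_G\times\F_{G'})$ pairwise disjoint forts of $G\Box G'$, so $\mu(\F_G\times\F_{G'})\le \ft(G\Box G')$ by Definition~\ref{fortnumber}. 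Combining with Step~1,
\[
z_G\cdot z_{G'} = \ft(G)\,\ft(G') = \mu(\F_G)\,\mu(\F_{G'}) \le \mu(\F_G\times\F_{G'}) \le \ft(G\Box G') = z_{G\Box G'}.
\]

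The argument has no deep obstacle; the only place calling for a little care is the identification $z_H=\ft(H)$ in Step~1 — in particular, checking that padding a family of disjoint forts into a partition keeps every part's complement from forcing, which is exactly the statement that this complement misses a fort. Everything else is a direct invocation of Theorem~\ref{thm.berge}, Proposition~\ref{prop:cart_tens_prod}, and Observation~\ref{obs.fteqmatch}.
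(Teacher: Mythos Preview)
Your proof is correct. The paper does not supply its own proof of Theorem~\ref{boxzg} --- it is cited from \cite{powdompart} --- but in the remark immediately following Corollary~\ref{boxfort} the paper sketches exactly your argument (reduce to $\ft$ via Corollary~\ref{zgftg}, then apply Proposition~\ref{prop:cart_tens_prod} together with the matching inequality in Theorem~\ref{thm.berge}) as a direct derivation of the equivalent statement $\ft(G)\ft(G')\le\ft(G\Box G')$. So your route coincides with the alternative proof the paper itself outlines.
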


%%%%%%%%%%%%%%%%%%%%%%%%%%%%%%%%%%%%%%%%%%%

\subsection{More Lower Bounds on $\Z(G\Box G')$}\label{subsec:more_bounds}

 In this section we present a variety of additional lower bounds on the zero forcing number of a Cartesian product of graphs.

\begin{theorem} \label{lower-sum} {\rm \cite%[{Theorem 2}]
{bergesim1974hypergraphs}}
 Let $H_1$ and $H_2$ be two hypergraphs. Then $\tau(H_1 \times H_2)\ge \tau(H_1)+\tau(H_2)-1$. 
 %A hypergraph $H_1 = (V_1,E_1)$ satisfies $\tau(H_1 \times H_2)= \tau(H_1)+\tau(H_2)-1$ for every hypergraph $H_2$ if and only if $\cap_{e\in E_1}\neq \varnothing$.
\end{theorem}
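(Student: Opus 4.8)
\textbf{Proof proposal for Theorem \ref{lower-sum}.}

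The plan is to prove the inequality $\tau(H_1 \times H_2) \ge \tau(H_1) + \tau(H_2) - 1$ by a projection/contradiction argument. Suppose $T \subseteq V(H_1) \times V(H_2)$ is a minimum transversal of $H_1 \times H_2$, so $|T| = \tau(H_1 \times H_2)$. The idea is to use $T$ to build a transversal of $H_1$ of size roughly $|T| - \tau(H_2) + 1$, which forces $|T| \ge \tau(H_1) + \tau(H_2) - 1$. First I would set $t_2 = \tau(H_2)$ and observe that $H_2$ cannot be covered by $t_2 - 1$ vertices. For a vertex $v \in V(H_1)$, let $T_v = \{w \in V(H_2) : (v,w) \in T\}$ be the ``column slice'' of $T$ over $v$; then $T = \bigcup_{v} (\{v\} \times T_v)$ and $|T| = \sum_v |T_v|$.

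The key step is the following claim: the set $A = \{v \in V(H_1) : |T_v| \ge t_2\}$ together with one extra vertex is a transversal of $H_1$ when $A \neq V(H_1)$ fails to be one by itself. More precisely, let $P = \{v \in V(H_1) : |T_v| \ge t_2 - 1 \text{ is large enough to ``almost cover'' } H_2\}$; the clean version is: call $v$ \emph{heavy} if $T_v$ is a transversal of $H_2$, i.e. $|T_v| \ge$ enough to hit every edge of $H_2$. Let $B$ be the set of heavy vertices of $H_1$. I claim $B$ is a transversal of $H_1$: given an edge $e_1 \in E(H_1)$, pick any edge $e_2 \in E(H_2)$; then $e_1 \times e_2 \in E(H_1 \times H_2)$ must meet $T$, so there is $(v,w) \in T$ with $v \in e_1$, $w \in e_2$. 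Running over all choices of $e_2$, the slice $T_v$ need not be a single transversal — this is where care is needed. The correct argument: suppose for contradiction that $e_1 \in E(H_1)$ is not met by $B$, i.e. for every $v \in e_1$, $T_v$ is \emph{not} a transversal of $H_2$, so there is an edge $e_2^{(v)} \in E(H_2)$ disjoint from $T_v$. But the edges $e_1 \times e_2^{(v)}$ still must be hit, and a hit $(v', w')$ with $v' \in e_1$ and $w' \in e_2^{(v')}$ — one must then argue $v' \ne v$ cannot always be arranged to derive a contradiction; the standard fix is to choose a single $e_2$ that is ``bad'' for all light slices simultaneously by a counting/pigeonhole step, or to invoke the De Bruijn--Erd\H{o}s-style argument from \cite{bergesim1974hypergraphs}. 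The main obstacle is precisely making this slice argument rigorous: one must show that if $B$ is not a transversal then $\sum_{v \in V(H_1)} |T_v|$ is forced to be small, specifically $|T| \le |B|\cdot(\text{something}) + (|V(H_1)| - |B|)(t_2 - 1)$ is the \emph{wrong} direction, so instead the bound should come from below.

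Here is the direction I would actually pursue, following the Berge--Simonovits technique. Fix a minimum transversal $T$ of $H_1 \times H_2$. For each $v \in V(H_1)$, either $T_v$ is a transversal of $H_2$ (hence $|T_v| \ge \tau(H_2)$) or it is not. Let $B = \{v : T_v \text{ is a transversal of } H_2\}$. \textbf{Subclaim:} $B$ is a transversal of $H_1$. Indeed, fix $e_1 \in E(H_1)$ and suppose no vertex of $e_1$ lies in $B$; then for each $v \in e_1$ there is $e_2^{(v)} \in E(H_2)$ with $e_2^{(v)} \cap T_v = \emptyset$. Now for each $v\in e_1$ the edge $e_1 \times e_2^{(v)}$ of $H_1\times H_2$ is covered by $T$, so it contains some $(v', w') \in T$ with $v'\in e_1$, $w'\in e_2^{(v')}$... but $w' \in T_{v'}$ and $w' \in e_2^{(v')}$ contradicts $e_2^{(v')}\cap T_{v'} = \emptyset$ \emph{only if} $v' $ happens to be the ``right'' vertex; since we only know $v' \in e_1$, repeat: the covering point of $e_1 \times e_2^{(v)}$ need not involve $v$. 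This genuinely requires the simultaneous choice, so I would instead argue: the vertex $v_0 \in e_1$ minimizing $|T_{v_0}|$, together with a transversal of $H_2$ built from the union $\bigcup_{v\in e_1} T_v$, gives a contradiction with minimality of $|T|$ unless $B$ meets $e_1$. Granting the Subclaim, $|B| \ge \tau(H_1)$. Now pick $v^\star \in B$; replace its slice $T_{v^\star}$ by any transversal $T'$ of $H_2$ of size $\tau(H_2)$ if that is smaller, and observe $|T| = \sum_v |T_v| \ge \sum_{v\in B}|T_v| \ge (|B|-1)\cdot 1 + \tau(H_2) \ge (\tau(H_1)-1) + \tau(H_2)$, using $|T_v|\ge 1$ for $v\in B\setminus\{v^\star\}$ and $|T_{v^\star}|\ge\tau(H_2)$. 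This yields $\tau(H_1\times H_2) = |T| \ge \tau(H_1) + \tau(H_2) - 1$, as desired. The hard part, as flagged, is the Subclaim — establishing that the set of ``fully covering'' first coordinates is itself a transversal of $H_1$ — and I would cite \cite{bergesim1974hypergraphs} for the precise combinatorial lemma if the direct argument proves too delicate.
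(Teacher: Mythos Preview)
The paper does not prove Theorem \ref{lower-sum}; it simply cites the result from Berge--Simonovits. So the comparison here is to the correctness of your argument on its own merits.

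Your central Subclaim---that the set $B=\{v\in V(H_1): T_v\text{ is a transversal of }H_2\}$ is a transversal of $H_1$---is \emph{false}, not merely ``delicate.'' Take $H_1$ with $V(H_1)=\{1,2\}$ and single edge $\{1,2\}$ (so $\tau(H_1)=1$), and $H_2$ with $V(H_2)=\{a,b,c,d\}$ and edges $\{a,b\}$, $\{c,d\}$ (so $\tau(H_2)=2$). Then $H_1\times H_2$ has two disjoint edges, so $\tau(H_1\times H_2)=2$, and $T=\{(1,a),(2,c)\}$ is a minimum transversal. Here $T_1=\{a\}$ misses $\{c,d\}$ and $T_2=\{c\}$ misses $\{a,b\}$, so $B=\emptyset$, which is certainly not a transversal of $H_1$. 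Thus the approach of singling out ``heavy'' columns cannot work as stated, and the final counting (which in any case undercounts, since $v\in B$ forces $|T_v|\ge\tau(H_2)$, not just $|T_v|\ge 1$) never gets off the ground.

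What does go through is the weaker observation you already made: for every edge $e_1\in E(H_1)$, the set $\bigcup_{v\in e_1}T_v$ is a transversal of $H_2$, hence $\sum_{v\in e_1}|T_v|\ge\tau(H_2)$. Setting $w(v)=|T_v|$, one then proves the purely combinatorial lemma that any nonnegative integer weight $w$ on $V(H_1)$ with $\sum_{v\in e}w(v)\ge k$ for all $e\in E(H_1)$ satisfies $\sum_v w(v)\ge \tau(H_1)+k-1$; this is a one-line induction on $k$ (pick any $v_0$ in the support and decrement $w(v_0)$ by one, reducing $k$ to $k-1$). Applying it with $k=\tau(H_2)$ yields the theorem. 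This replaces your false structural Subclaim with a simple counting/induction step.
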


\begin{proposition} \label{zero-sum}
Let $G$ and ${G'}$ be  graphs each with an edge.
Then,
\[
\Z(G\Box {G'})\ge \Z(G)+\Z({G'})
\]
and this bound is sharp.
\end{proposition}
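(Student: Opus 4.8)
The plan is to obtain this bound by chaining together three results already in hand. First I would invoke Lemma~\ref{lem.crossfortplusone}: since $G$ and $G'$ each contain an edge, $\Z(G\Box G')\ge \tau(\F_G\times\F_{G'})+1$. Next I would apply Theorem~\ref{lower-sum} with $H_1=\F_G$ and $H_2=\F_{G'}$, giving $\tau(\F_G\times\F_{G'})\ge \tau(\F_G)+\tau(\F_{G'})-1$. Finally, Theorem~\ref{thm.fortz} identifies $\tau(\F_G)=\Z(G)$ and $\tau(\F_{G'})=\Z(G')$. Putting these together yields
\[
\Z(G\Box G')\ \ge\ \tau(\F_G\times\F_{G'})+1\ \ge\ \bigl(\Z(G)+\Z(G')-1\bigr)+1\ =\ \Z(G)+\Z(G'),
\]
which is the claimed inequality; the $+1$ coming from Lemma~\ref{lem.crossfortplusone} exactly cancels the $-1$ coming from Theorem~\ref{lower-sum}.

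For sharpness I would exhibit the family $K_r\Box K_2$ with $r\ge 2$. Since $K_2=P_2$, the equality $\Z(K_r\Box P_s)=r$ recorded earlier (take $s=2$) gives $\Z(K_r\Box K_2)=r=(r-1)+1=\Z(K_r)+\Z(K_2)$, so equality holds throughout the family. (If only a single witness is wanted, $C_4=K_2\Box K_2$ with $\Z(C_4)=2=\Z(K_2)+\Z(K_2)$ already suffices.)

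I do not expect a real obstacle here: the argument is a direct concatenation of Lemma~\ref{lem.crossfortplusone}, Theorem~\ref{lower-sum}, and Theorem~\ref{thm.fortz}. The only point that needs checking is that the hypotheses of Lemma~\ref{lem.crossfortplusone} are met --- namely that both $G$ and $G'$ contain an edge --- which is precisely the standing assumption of the proposition; Theorem~\ref{lower-sum} applies to arbitrary (nonempty) hypergraphs, and $\F_G,\F_{G'}$ are always nonempty. The sharpness claim uses only the previously cited value $\Z(K_r\Box P_s)=r$.
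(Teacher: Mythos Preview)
Your proposal is correct and follows essentially the same approach as the paper: chain Lemma~\ref{lem.crossfortplusone}, Theorem~\ref{lower-sum}, and the identification $\tau(\F_G)=\Z(G)$, then exhibit $K_r\Box P_s$ for sharpness (you take the special case $s=2$, the paper allows general $s$, but this is immaterial).
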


\begin{proof}
By Lemma \ref{lem.crossfortplusone} and Theorem \ref{lower-sum}, we have
\[
\Z(G\Box {G'})\ge \tau({\cal F}_G\times {\cal F}_{G'})+1\ge \tau({\cal F}_G)+\tau({\cal F}_{G'})=\Z(G)+\Z({G'}).
\]
To see that the bound is sharp, note that
\[\Z(K_r \Box P_n)=r=(r-1)+1=\Z(K_r)+\Z(P_n)
\]
where the equalities involving $\Z$ were established in~\cite{aim}. %\cite[Theorem 9.5]{HogLinShad}  (and also follows from Theorem \ref{t:scp1} below).
%\textcolor{red}{(we cited \cite{aim} for this same result earlier, shouldn't we be consistent?)}
%This shows that the inequality given in Proposition  \ref{zero-sum} is {\blu sharp}.
\end{proof}

The next example shows that the lower bound in Proposition \ref{zero-sum} is sometimes better than that in Theorem \ref{Thm1}.
\begin{example}
    Let $G=G'$ be pentasun. Then the lower bound for $\Z(G\Box {G'})$ in Proposition \ref{zero-sum} is equal to $2 Z(G)=6$, while using Theorem \ref{Thm1}, the lower bound is equal to $M^2(G)+1=5$. 
\end{example}

 As is common with sharp bounds on graph parameters (e.g., $\M(G)\le\Z(G)$), there exist families of graphs for which the gap grows arbitrarily large.
\begin{example}
   For $s \geq 2$, we have that $\Z(P_s \Box P_s)=s \geq 2 = \Z(P_s)+\Z(P_s)$ where the equalities were established in \cite{aim}.
\end{example}

\begin{theorem}[Lovasz 1975 \cite{lovasz1975ratio}]\label{thm:lovasz}
Let $H$ be a hypergraph with maximum degree $\Delta$ (i.e., each vertex is in at most $\Delta$ edges).
Then,
\[ \frac {\tau(H)}{\tau^*(H)} \le 1 + \ln \Delta. \]
\end{theorem}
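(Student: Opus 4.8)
The plan is to prove the stronger bound $\tau(H)\le H(\Delta)\,\tau^{*}(H)$, where $H(\Delta)=\sum_{k=1}^{\Delta}\frac1k$ is the $\Delta$-th harmonic number, and then use $H(\Delta)\le 1+\log\Delta$; this is essentially Lov\'asz's original dual-fitting analysis of the greedy transversal algorithm. I view the transversal problem as a set-cover instance: the ``universe'' is $E(H)$, and each vertex $v$ ``covers'' the set of edges containing it, a set of size $\deg_H(v)\le\Delta$. The greedy rule is to repeatedly choose a vertex covering the largest number of currently uncovered edges, until none remain.

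First I would note that, since $V(H)=\bigcup_{e\in E(H)}e$ and every edge is nonempty, greedy terminates with a transversal $\{v_1,\dots,v_t\}$, so $\tau(H)\le t$. Assign to each edge $e$ a \emph{price} $p(e)=1/c_i$, where $v_i$ is the vertex whose selection first covered $e$ and $c_i$ is the number of previously uncovered edges that $v_i$ covered when it was chosen. Since $v_i$ is responsible for exactly $c_i$ edges, $\sum_{e\in E(H)}p(e)=t$.

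The key step is to show that $y_e:=p(e)/H(\Delta)$ is feasible for the fractional matching LP of $H$, i.e. $\sum_{e\ni v}y_e\le 1$ for every vertex $v$. Fix $v$, let $d=\deg_H(v)\le\Delta$, and list its incident edges $e_1,\dots,e_d$ in the order in which greedy first covers them. At the start of the step in which $e_j$ gets covered, the edges $e_j,\dots,e_d$ are all still uncovered (note this forces $v$ itself not to have been chosen earlier), so $v$ would have covered at least $d-j+1$ new edges at that step; since greedy picks a vertex covering at least as many new edges as $v$ would, the chosen vertex covered $\ge d-j+1$ new edges, giving $p(e_j)\le\frac1{d-j+1}$. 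Summing, $\sum_{j=1}^d p(e_j)\le\sum_{k=1}^d\frac1k=H(d)\le H(\Delta)$, hence $\sum_{e\ni v}y_e\le 1$. This per-vertex harmonic bound on the prices is the crux of the argument, and the one place where the ordering of the $e_j$ (including ties when several are covered at the same step) and the precise meaning of ``new edges at this step'' must be handled with care.

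Finally, because $y$ is a feasible fractional matching, $\sum_{e}y_e\le\mu^{*}(H)$, and by LP duality (already recorded in the paper as $\tau^{*}(H)=\mu^{*}(H)$) we obtain $t/H(\Delta)=\sum_e y_e\le\tau^{*}(H)$. Therefore $\tau(H)\le t\le H(\Delta)\,\tau^{*}(H)\le(1+\log\Delta)\,\tau^{*}(H)$, using $H(\Delta)\le 1+\int_1^{\Delta}\frac{dx}{x}=1+\log\Delta$ (natural logarithm; if $\log$ is read with any larger base the bound only gets weaker). Rearranging yields $\tau(H)/\tau^{*}(H)\le 1+\log\Delta$.
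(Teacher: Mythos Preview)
The paper does not prove this theorem; it is quoted as a known result of Lov\'asz \cite{lovasz1975ratio} and simply applied. Your argument is the standard dual-fitting analysis of the greedy cover and is correct: the pricing $p(e)=1/c_i$, the per-vertex harmonic bound $\sum_{e\ni v}p(e)\le H(\deg_H(v))\le H(\Delta)$, and the appeal to $\mu^*(H)=\tau^*(H)$ are all sound, including the handling of ties when several edges incident to $v$ are covered in the same greedy step (your inequality $p(e_j)\le 1/(d-j+1)$ still holds because $c_i\ge d-j+1\ge d-(j+s)+1$ for each such edge). One small slip: your closing parenthetical is backwards. If $\log$ is taken with a base \emph{larger} than $e$, then $1+\log_b\Delta<1+\ln\Delta$, so the stated bound would be \emph{stronger} than what your proof establishes, not weaker; the proof as written yields the natural logarithm, which is indeed what Lov\'asz's theorem intends.
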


\begin{corollary}\label{cor:lovasz}
Let $G'$ be a graph containing an edge where every vertex in $G'$ is in at most $\Delta$ minimal forts of $G'$.
Then for every graph $G$ that contains an edge,
\[ 1+\frac{ \Z(G) \Z(G')}{1 + \ln \Delta} \le \Z (G \Box G') \]
\end{corollary}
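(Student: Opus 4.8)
The plan is to chain three ingredients already available: Lemma~\ref{lem.crossfortplusone} (the "$+1$"), one of the inequalities in Berge's Theorem~\ref{thm.berge}, and Lov\'asz's ratio bound (Theorem~\ref{thm:lovasz}) applied to the fort hypergraph of $G'$. First I would invoke Lemma~\ref{lem.crossfortplusone}, which applies since both $G$ and $G'$ contain an edge, to get
\[
\Z(G\Box G')\ \ge\ \tau(\F_G\times\F_{G'})+1 .
\]
So it suffices to show $\tau(\F_G\times\F_{G'})\ge \frac{\Z(G)\Z(G')}{1+\log\Delta}$.

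Next I would bound $\tau(\F_G\times\F_{G'})$ from below using Theorem~\ref{thm.berge}. The key observation is to take the \emph{fractional} relaxation on the $G'$ side, not the $G$ side, since the degree hypothesis is about $G'$: noting that reordering coordinates is a hypergraph isomorphism, $\tau(\F_G\times\F_{G'})=\tau(\F_{G'}\times\F_G)$, and applying the inequality $\tau^*(H_1)\tau(H_2)\le\tau(H_1\times H_2)$ from Theorem~\ref{thm.berge} with $H_1=\F_{G'}$, $H_2=\F_G$ yields
\[
\tau(\F_G\times\F_{G'})\ \ge\ \tau^*(\F_{G'})\,\tau(\F_G)\ =\ \Z^*(G')\,\Z(G),
\]
where the last equality is Theorem~\ref{thm.fortz} together with Definition~\ref{d:fracZ}.

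Then I would apply Lov\'asz's theorem to $H=\F_{G'}$. The hypothesis that every vertex of $G'$ lies in at most $\Delta$ minimal forts of $G'$ is precisely the statement that $\F_{G'}$ has maximum degree at most $\Delta$ (and this maximum degree is at least $1$ since $G'$ has an edge, so $1+\log\Delta\ge 1>0$ and division is legitimate). Hence Theorem~\ref{thm:lovasz} gives $\tau(\F_{G'})/\tau^*(\F_{G'})\le 1+\log\Delta$, i.e.\ $\Z^*(G')=\tau^*(\F_{G'})\ge \tau(\F_{G'})/(1+\log\Delta)=\Z(G')/(1+\log\Delta)$. Substituting into the previous display and adding the $1$ from Lemma~\ref{lem.crossfortplusone} completes the proof:
\[
\Z(G\Box G')\ \ge\ \Z(G)\,\Z^*(G')+1\ \ge\ 1+\frac{\Z(G)\Z(G')}{1+\log\Delta}.
\]

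This argument is essentially bookkeeping; there is no deep obstacle. The only point that requires care is choosing the correct factor on which to pass to the fractional transversal number — it must be $\F_{G'}$, because the Lov\'asz bound can only be applied where a bound on the maximum hypergraph degree is assumed — together with the routine check that the relevant maximum degree is at least $1$ so that $1+\log\Delta>0$.
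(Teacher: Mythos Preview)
Your proposal is correct and follows essentially the same route as the paper's proof: the paper also chains Lemma~\ref{lem.crossfortplusone}, the inequality $\tau^*(H_1)\tau(H_2)\le\tau(H_1\times H_2)$ from Theorem~\ref{thm.berge}, and Lov\'asz's Theorem~\ref{thm:lovasz} applied to $\F_{G'}$, in a single displayed chain. Your write-up is slightly more explicit about the symmetry $\tau(\F_G\times\F_{G'})=\tau(\F_{G'}\times\F_G)$ needed to place the fractional relaxation on the $G'$ factor, which the paper leaves implicit.
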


\begin{proof}
By Lemma \ref{lem.crossfortplusone} and Theorems \ref{sandwichproduct} and \ref{thm:lovasz} %and \ref{thm.fortz} and  
we have 
\[
\Z(G \Box G') \ge \tau(\F_G \times \F_{G'})+1 \ge \tau(\F_G) \tau^*(\F_{G'})+1\ge \frac{\tau(\F_G) \tau(\F_{G'})}{1+\ln \Delta}+1.
\]
%$\frac{\Z(G')}{1+\log\Delta} \leq \Z^*(G')$.  Thus,  $\frac{\Z(G)\Z(G')}{1+\log\Delta} \leq \Z(G)\Z^*(G') \leq \Z(G \Box G')$ by Corollary \ref{cor:cart_prod}.
\end{proof}

 %The bound in the previous corollary   achieves  equality when the minimal forts of $G'$ are disjoint and $G$ is any graph that contains an edge. Examples of such $G'$ include  $P_2$,   $C_4$, and any graph of the form $G_1\circ 2K_1$ (see Example~\ref{ex:double_foliation}).

As with many bounds, there exist families of graphs for which the gap grows arbitrarily large.
For instance, by~\cite%[Corollary 14]
{Becker2024}, the path graph has an exponential number of minimal forts (in the order of the graph) and every fort of the path graph contains the two pendent vertices.  
%In general, there can be approximately $2^{n}$ forts and a vertex can be in approximately $2^{n-1}$ forts (i.e., the complete graph), so the bound in Corollary~\ref{cor:lovasz} may not be very good; however, Corollary~\ref{cor:lovasz} is concerned with the number of {\it minimal forts}. If the number of minimal forts of $G$ is polynomial, then Corollary~\ref{cor:lovasz} provides a more reasonable lower bound. %At this time, we cannot find a family of graphs where the number of minimal forts grows exponentially in the order of the graph. This leads to the following question, which we leave open.}

%{\begin{question} \label{quest:how_many_forts}
%    Is there an infinite family of graphs where the number of minimal forts grows exponentially in the order of the graph? More generally, as a function of $n$, what is the extremal number of minimal forts over all graphs of order $n$?
%\end{question}
%}

%%%%%%%%%%%%%%%%%%%%%%%%%%%%%
%%%%%%%%%%%%%%%%%%%%%%%%%%%%%%%%%%%%%%%%

\section{Graphs satisfying $\Z(G \Box G')=\Z(G)\Z(G')+1$}\label{s:realize-lowerbd}

 In this section we exhibit a family of graphs attaining the bound 
$\Z(G \Box G')=\Z(G)\Z(G')+1$.
We begin with several definitions.  A vertex $v$ of a graph $G$ is called a \emph{cut-vertex} if deleting $v$ and all edges incident to it increases the
number of connected components of $G$. A \emph{block} of a graph $G$ is a maximal connected induced subgraph of $G$
that has no cut-vertices. A graph is \emph{block-clique} (also called 1-chordal) if every block is a clique. 
  Note that a block-clique graph can be constructed iteratively as follows: $G_1$ is a clique.  Given that $G_{k-1}$ is block-clique, then  define $G=G_{k-1}\cup  G_k'$ where $G_k'$ is a clique and $V(G_{k-1})\cap V({G_k'})=\{v_k\}$ for some vertex $v_k$.  The next result is well-known and useful. 

\begin{theorem}\label{t:b-c}{\rm  \cite%[Theorem 7]
{block-cliquethm}}
If $G$ is a block-clique graph, then $\Z(G)=\M(G)$.
\end{theorem}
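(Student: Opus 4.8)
The plan is to induct on the number of blocks of $G$; we may assume $G$ is connected, since $\M$ and $\Z$ are additive over connected components and a block-clique graph has block-clique components. Since $\M(H)\le\Z(H)$ holds for every graph $H$, only the reverse inequality $\Z(G)\le\M(G)$ needs to be established. In the base case a single block is a clique $K_n$, and $J_n\in\mathcal{S}(K_n)$ has $\nul J_n=n-1=\Z(K_n)$ (the case $n=1$ being trivial), so $\M(K_n)=\Z(K_n)$.

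For the inductive step, let $G$ have at least two blocks, choose a leaf block $B\cong K_m$ whose unique cut vertex is $v$, and put $G'=G\bigl[(V(G)\setminus V(B))\cup\{v\}\bigr]$, so that $G$ is the vertex-sum of $G'$ and $B$ at $v$, the graph $G'$ is block-clique with one fewer block, and $\M(G')=\Z(G')$ by the inductive hypothesis. Writing $r_v(H)=\mr(H)-\mr(H-v)\in\{0,1,2\}$, the vertex-sum identity $r_v(G)=\min\{2,\,r_v(G')+r_v(B)\}$ from minimum-rank theory (see \cite{HogLinShad}) computes $\M(G)$ from $\M(G')$ and $r_v(G')$. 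If $m\ge3$, then $r_v(K_m)=0$ and $\mr(K_{m-1})=1$, so $\mr(G)=\mr(G')+1$ and hence $\M(G)=\M(G')+(m-2)$; on the zero-forcing side, enlarging a minimum zero forcing set of $G'$ by any $m-2$ of the $m-1$ new vertices of $B$ and performing the one remaining force inside $B$ as soon as $v$ is filled gives $\Z(G)\le\Z(G')+(m-2)$, which together with $\M(G)\le\Z(G)$ and the inductive hypothesis forces $\Z(G)=\M(G)$. If $m=2$ (so $B$ is a pendant edge at $v$), the same computation with $r_v(K_2)=1$ gives $\M(G)=\M(G')+1$ when $r_v(G')=2$ and $\M(G)=\M(G')$ otherwise; when $r_v(G')=2$, the bounds $\M(G)\le\Z(G)\le\Z(G')+1$ and the inductive hypothesis again force $\Z(G)=\M(G)$.

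Everything therefore reduces to the single remaining case $m=2$ with $r_v(G')\le1$, where one must show that the pendant edge does not increase the zero forcing number, i.e.\ $\Z(G)=\Z(G')$. Since the set of chain terminals of any optimal forcing process is again a minimum zero forcing set (the reversal involution), $G'$ has a minimum zero forcing set with a forcing process in which $v$ is never a forcer precisely when $v$ lies in some minimum zero forcing set of $G'$; and from such a process one gets a minimum zero forcing set of $G$ by having $v$ force the pendant vertex at the very end. So the crux is the statement: \emph{if $G'$ is block-clique and $r_v(G')\le1$, then $v$ lies in some minimum zero forcing set of $G'$.}

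I expect this to be the main obstacle: it links a minimum-rank condition at $v$ to a zero-forcing one, and standard zero forcing — unlike positive semidefinite zero forcing — has no clean universal cut-vertex reduction to invoke. The block-clique hypothesis is what makes it tractable, because the neighborhood of any vertex of a block-clique graph is a disjoint union of cliques, so the local picture at $v$ is highly constrained and $r_v(G')\le1$ restricts it to a short list of cases (heuristically, $v$ fails to be simultaneously "essential" for two distinct cliques meeting at it). In each case one produces the desired minimum zero forcing set containing $v$ — equivalently, a nullity-optimal $A'\in\mathcal{S}(G')$ with a zero $(v,v)$ entry, which then extends by the all-ones block on $B$ with no increase in rank. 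Completing this finite case analysis finishes the induction.
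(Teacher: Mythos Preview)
The paper does not give its own proof of this theorem; it is quoted verbatim from \cite{block-cliquethm} (Huang, Chang, and Yeh) and used as a black box. So there is nothing in the present paper to compare your argument against.

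As to the argument itself: your induction framework is reasonable, and the reductions for the block $K_m$ with $m\ge 3$, and for the pendant case $m=2$ with $r_v(G')=2$, are correct. But you have not actually proved the theorem. The entire difficulty is concentrated in the remaining case $m=2$ with $r_v(G')\le 1$, where you need to show that $v$ lies in some minimum zero forcing set of the block-clique graph $G'$. You reduce correctly to this statement (the reversal argument linking ``$v$ is never a forcer'' to ``$v$ is in some minimum zero forcing set'' is fine), but then you only offer a heuristic (``the local picture at $v$ is highly constrained'') and the sentence ``Completing this finite case analysis finishes the induction.'' That is a promissory note, not a proof. The link you need---from a minimum-rank condition $r_v(G')\le 1$ to a purely combinatorial zero-forcing conclusion about $v$---is exactly the nontrivial content of the theorem, and nothing you have written establishes it. Until that case is actually carried out, the induction does not close.
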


 \begin{definition}\label{def:star-cliq path}
A \emph{star-clique path} is a graph $G$ that can be constructed as $G=G_k$  where $G_j=\cup_{i=1}^j G_i'$ for $j=1,\dots,k$, each $G_i'$ is a star or a clique (each with at least two vertices), and the following conditions are satisfied: 
\ben
\item  $V(G_{i-1}')\cap V(G_i')=\{v_i\}$ for some vertex $v_i$ for $i\in \{2,3, \ldots, k\}$ and $V(G_{j}')\cap V(G_i')=\emptyset$ for $j=1,\dots,i-2$.
\item  $v_i\not\in\{v_2,\dots,v_{i-1}\}$ for $i\in  \{3, \ldots, k\}$.
\item If ${G_i'}$ is a star for some $i\in \{2,3, \ldots, k-1\}$, then $\deg_{{G_i'}}(v_i)=1$ and  $\deg_{{G_i'}}(v_{i+1})=1$. 
 If ${G_1'}$ is a star then $\deg_{{G_1'}}(v_2)=1$, and if ${G_k'}$ is a star then $\deg_{{G_k'}}(v_k)=1$.
\een
\end{definition}
Figure \ref{SCP} shows examples of star-clique paths.
\begin{figure}[ht]
\centering
\begin{tikzpicture}[scale=0.75]
    \begin{scope}
    \node[circle,draw=black,fill=white] (0) at (0,0) {};
    \node[circle,draw=black,fill=white] (1) at (0,1) {};
    \node[circle,draw=black,fill=white] (2) at (0,2) {};
    \node[circle,draw=black,fill=white] (3) at (0,3) {};
    \node[circle,draw=black,fill=white] (4) at (1,3) {};
    \node[circle,draw=black,fill=white] (5) at (-1,3) {};
    \node[circle,draw=black,fill=white] (6) at (-2,3) {};
    \node[circle,draw=black,fill=white] (7) at (-1.5,3.86) {};
    \node[circle,draw=black,fill=white] (8) at (-1.5,2.14) {};

    \foreach \x/\y in {0/1,1/2,2/3,3/4,3/5,5/6,5/7,5/8,6/7,6/8,7/8}
        \draw[black,=>latex',-,very thick] (\x) -- (\y);
    \end{scope}
    \begin{scope}[xshift=150]
    \node[circle,draw=black,fill=white] (0) at (-0.5,0) {};
    \node[circle,draw=black,fill=white] (1) at (0.5,0) {};
    \node[circle,draw=black,fill=white] (2) at (0,0.86) {};
    \node[circle,draw=black,fill=white] (3) at (0,1.86) {};
    \node[circle,draw=black,fill=white] (4) at (0,2.86) {};
    \node[circle,draw=black,fill=white] (5) at (1,2.86) {};
    \node[circle,draw=black,fill=white] (6) at (-1,2.86) {};
    \node[circle,draw=black,fill=white] (7) at (0,3.86) {};
    \node[circle,draw=black,fill=white] (8) at (0,4.86) {};

    \foreach \x/\y in {0/2,1/2,2/3,3/4,4/5,4/6,4/7,7/8}
        \draw[black,=>latex',-,very thick] (\x) -- (\y);
    \end{scope}
    \begin{scope}[xshift=300]
    \node[circle,draw=black,fill=white] (0) at (-0.5,0) {};
    \node[circle,draw=black,fill=white] (1) at (0.5,0) {};
    \node[circle,draw=black,fill=white] (2) at (0,0.86) {};
    
    \node[circle,draw=black,fill=white] (3) at (0,1.86) {};
    \node[circle,draw=black,fill=white] (4) at (0.95,2.55) {};
    \node[circle,draw=black,fill=white] (5) at (0.58,3.66) {};
    \node[circle,draw=black,fill=white] (6) at (-0.58,3.66) {};
    \node[circle,draw=black,fill=white] (7) at (-0.95,2.55) {};

    \node[circle,draw=black,fill=white] (8) at (0.58,4.66) {};

    \foreach \x/\y in {0/2,1/2,2/3,3/4,3/5,3/6,3/7,4/5,4/6,4/7,5/6,5/7,6/7,5/8}
        \draw[black,=>latex',-,very thick] (\x) -- (\y);
    \end{scope}
\end{tikzpicture}
\caption{Examples of star-clique paths}
\label{SCP}
\end{figure}
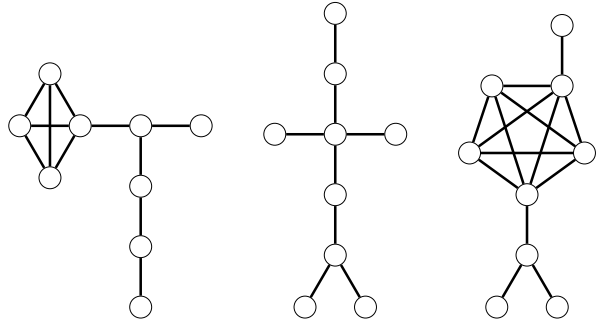
Since a star-clique path   is a block-clique graph, the next result follows from Theorem \ref{t:b-c}
\begin{corollary}\label{cor:star-cli-path}
 If $G$ is a star-clique path, then $\Z(G)=\M(G)$.
\end{corollary}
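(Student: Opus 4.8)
\textbf{The plan is to} verify that a star-clique path is a block-clique graph, so that Corollary~\ref{cor:star-cli-path} becomes an immediate consequence of Theorem~\ref{t:b-c}. The entire task is thus to show: every block of a star-clique path $G$ is a clique. Once that is established, there is nothing left to prove — apply Theorem~\ref{t:b-c} directly.

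\textbf{First I would} recall the iterative construction guaranteed by Definition~\ref{def:star-cliq path}: $G=G_k$ where $G_j=\bigcup_{i=1}^j G_i'$, each $G_i'$ is a star or a clique, consecutive pieces meet in exactly one vertex $v_i$ (with $V(G_j')\cap V(G_i')=\emptyset$ for $j\le i-2$), and the $v_i$ are distinct. The key structural observation is that, because consecutive pieces share exactly one vertex and nonconsecutive pieces are vertex-disjoint, the $G_i'$ are glued together in a ``path-of-blocks'' fashion, and the only vertices that can be cut-vertices are the $v_i$. I would argue that the blocks of $G$ are precisely: the cliques among the $G_i'$, taken as-is; and, for each star $G_i'$, its component edges — each edge of a star is a block $K_2$, since a leaf edge of a star forms a maximal $2$-connected induced subgraph. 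Every one of these pieces (a clique, or a single edge $K_2$) is a clique, so every block of $G$ is a clique, i.e.\ $G$ is block-clique. Conditions (3) and (4) in Definition~\ref{def:star-cliq path} (that the gluing vertices $v_i, v_{i+1}$ sit at leaves of any internal star, and $v_2$, $v_k$ at leaves of a terminal star) ensure the gluing does not create a larger $2$-connected subgraph straddling two pieces: gluing at a degree-$1$ vertex of a star leaves the attachment point a cut-vertex, so no new cycles are formed spanning distinct $G_i'$.

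\textbf{The main obstacle} — and it is a mild one — is the bookkeeping needed to confirm that no block of $G$ straddles two or more of the pieces $G_i'$. Concretely, one must check that each $v_i$ is a cut-vertex of $G$ (so that a block cannot contain vertices on ``both sides'' of $v_i$), which follows because deleting $v_i$ disconnects $\bigcup_{j<i} G_j'$ from $\bigcup_{j\ge i} G_j'$ by the disjointness hypotheses on nonconsecutive pieces and the single-vertex overlap of consecutive ones. The leaf-degree conditions (3)--(4) are exactly what rule out the one remaining danger: if an internal star $G_i'$ were attached to its neighbors at its center, the center together with the two attachment edges and parts of the adjacent pieces could in principle fail to decompose nicely — but since $\deg_{G_i'}(v_i)=\deg_{G_i'}(v_{i+1})=1$, both $v_i$ and $v_{i+1}$ remain cut-vertices of $G$, and each leaf edge of $G_i'$ is its own block. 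Having isolated the blocks as cliques, I would then simply invoke Theorem~\ref{t:b-c} to conclude $\Z(G)=\M(G)$, completing the proof.
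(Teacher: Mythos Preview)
Your proposal is correct and follows exactly the approach the paper takes: the paper simply notes that a star-clique path is a block-clique graph and invokes Theorem~\ref{t:b-c}, without spelling out the block decomposition. Your additional bookkeeping (that each $v_i$ is a cut-vertex and each star decomposes into $K_2$ blocks) just makes explicit what the paper leaves implicit.
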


Next, we state an additional result that we use in the proof of  Proposition \ref{lem1}. %the main theorem of this section.

\begin{lemma}{\rm \cite{DDR}} \label{lem0}
Let $G=(V_G, E_G)$ be a graph with cut-vertex $v\in V_G$. Let $W_1, \ldots, W_h$ be the vertex sets for the connected components of $G-v$ and for $1\le i \le h$, let $G_i=G[W_i\cup \{v\}]$. Then
\[
\Z(G)\ge \sum_{i=1}^h \Z(G_i)-h+1.
\]
\end{lemma}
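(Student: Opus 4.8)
The plan is to take a minimum zero forcing set $B$ of $G$ with $|B| = \Z(G)$, fix a chronological list of forces realizing it, and project this process onto each branch $G_i = G[W_i \cup \{v\}]$. The structural fact that makes this work is that $v$ is a cut-vertex and each $W_i$ is a connected component of $G-v$, so no vertex outside $W_i \cup \{v\}$ is adjacent to any vertex of $W_i$; consequently, in the process on $G$, every force $u \to w$ with $w \in W_i$ has forcer $u \in W_i \cup \{v\}$. Set $B_i := (B \cap W_i) \cup \{v\}$ for each $i$.

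First I would show $B_i$ is a zero forcing set of $G_i$. From the list of forces for $G$, extract the subsequence of those forces whose forced vertex lies in $W_i$ and replay them, in the same relative order, in $G_i$ starting from $B_i$. Each such force $u \to w$ is legal in $G_i$ when its turn comes: because $G_i$ is induced, $N_{G_i}(u) \subseteq N_G(u)$, and each $x \in N_{G_i}(u) \setminus \{w\}$ was already filled in $G$ before the force $u \to w$ (as $w$ was then $u$'s only unfilled $G$-neighbor), so $x$ is either $v$ (in $B_i$) or a vertex of $W_i$ lying in $B \cap W_i \subseteq B_i$ or the target of an earlier force of the subsequence; induction on position then shows $x$ is already filled in the $G_i$-process. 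Since every vertex of $W_i$ is eventually filled in $G$ (hence appears in $B\cap W_i$ or as a target in the subsequence), all of $W_i$ gets filled, so $\Z(G_i) \le |B_i| = |B \cap W_i| + 1$.

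If $v \in B$, summing gives $\sum_{i=1}^k \Z(G_i) \le \sum_{i=1}^k (|B \cap W_i| + 1) = (|B|-1) + k = \Z(G) + k - 1$, which is the claim. If $v \notin B$, then $v$ is filled by some force $u_0 \to v$ with $u_0 \in N_G(v)$, say $u_0 \in W_{i_0}$; I would argue that $B \cap W_{i_0}$ alone is a zero forcing set of $G_{i_0}$. Indeed, before $v$ is filled in the $G$-process, the forces that fill vertices of $W_{i_0}$ neither use $v$ as a forcer nor, since a forcer adjacent to the unfilled $v$ could not force at all, involve $v$ in any legality check; so the earlier argument lets us replay them in $G_{i_0}$ from $B\cap W_{i_0}$, in particular filling every neighbor of $u_0$ in $W_{i_0}$, after which $u_0 \to v$ is legal in $G_{i_0}$ and the remaining $W_{i_0}$-forces can be replayed as before. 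Hence $\Z(G_{i_0}) \le |B \cap W_{i_0}|$, and summing, $\sum_{i=1}^k \Z(G_i) \le |B \cap W_{i_0}| + \sum_{i \ne i_0}(|B \cap W_i| + 1) = |B| + k - 1 = \Z(G) + k - 1$.

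The main obstacle is the careful bookkeeping in these two replay arguments: one must verify that replaying the extracted subsequence in the induced subgraph $G_i$ is genuinely legal step by step — that no force secretly relied on a filled vertex outside the branch — and, in the $v\notin B$ case, that the order can be arranged so that $W_{i_0}$ is filled far enough to make $u_0 \to v$ legal before $v$ is needed. This is exactly where the cut-vertex hypothesis is used, and is the only delicate part; the arithmetic at the end is then immediate.
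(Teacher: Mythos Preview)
The paper does not prove this lemma; it is quoted from \cite{DDR} and used as a black box in the proof of Proposition~\ref{lem1}. Your argument is correct and is essentially the standard proof: restrict a minimum zero forcing process for $G$ to each branch $G_i$, using that the cut-vertex $v$ is the only possible point of interaction between branches, and handle the two cases $v\in B$ and $v\notin B$ to avoid over-counting $v$. The bookkeeping you flag as delicate is exactly right, and you have handled it: in the $v\notin B$ case, the observation that no forcer adjacent to the still-unfilled $v$ can act ensures the pre-$v$ portion of the $W_{i_0}$ subsequence replays without needing $v$, after which $u_0\to v$ becomes legal in $G_{i_0}$ and the rest proceeds as in the generic branch.
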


 Note that for $r\ge 2$,  we have the following options for choosing a minimum zero forcing set $B'$ for  $K_r$  and $K_{1,r}$:  For $K_r$ and any vertices $u$ and $w$, we may choose $B'$ so that $u\in B'$ and $w\not\in B'$ (which implies $w$ does not perform a force when forcing $K_r$ with $B'$).  For $K_{1,r}$ and any leaves $u$ and $w$, we may choose $B'$ so that $u\in B'$ and $w\not\in B'$ (which implies $w$ does not perform a force when forcing $K_{1,r}$ with $B'$). Thus for a  star-clique path $G$ constructed from $G'_1, \dots, G'_k$ according to Definition \ref{def:star-cliq path}. we can choose zero forcing sets $B'_i$ for $G'_i$ for $i=1,\dots,k$  with the following properties:    For $i=2,\dots,k$, $v_i\in B'_i$ and $v_i\not\in B'_{i-1}$ and $v_i$ does not perform a force when forcing $G'_{i-1}$ with $B'_{i-1}$.  Such a zero forcing set of $G$ is called a \emph{canonical star-clique path set}.
\begin{proposition} \label{lem1}
Let $G$ be a star-clique path constructed from $G'_1, \dots, G'_k$ according to Definition \ref{def:star-cliq path}. Then  a canonical star-clique path set is a minimum zero forcing set of $G$ and 
\[
\Z(G)= \sum_{i=1}^k \Z(G'_i)-k+1.
\]
\end{proposition}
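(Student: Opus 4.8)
The plan is to establish the two inequalities separately. The lower bound $\Z(G)\ge \sum_{i=1}^k \Z(G'_i)-k+1$ should follow by induction on $k$ using Lemma \ref{lem0}. The base case $k=1$ is trivial. For the inductive step, consider the cut-vertex $v_k$ that separates $G'_k$ from the rest; its deletion splits $G$ into the component containing $G'_k\setminus\{v_k\}$ and the component(s) coming from $G_{k-1}$. Applying Lemma \ref{lem0} at $v_k$ (noting that $G_{k-1}$ may itself be disconnected by the removal of $v_k$, but one can first handle the well-behaved case where $v_k$ has degree 1 in the star pieces, or more cleanly, apply the lemma to the two pieces $G[W\cup\{v_k\}]$ where $W$ ranges over the components of $G-v_k$ and regroup) gives $\Z(G)\ge \Z(G'_k) + \Z(G_{k-1}) - 1$, and then the inductive hypothesis $\Z(G_{k-1})\ge \sum_{i=1}^{k-1}\Z(G'_i)-(k-1)+1$ finishes this direction. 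One must be slightly careful that $G'_k$ with its attaching vertex $v_k$ really is $G[W_k\cup\{v_k\}]$ for the appropriate component $W_k$; this uses condition (1) of Definition \ref{def:star-cliq path}, which guarantees $G'_k$ meets the rest of $G$ only in $v_k$.

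For the upper bound $\Z(G)\le \sum_{i=1}^k \Z(G'_i)-k+1$, the plan is to build an explicit zero forcing set. Since $G$ is a star-clique path, it is block-clique, so by Corollary \ref{cor:star-cli-path} we have $\Z(G)=\M(G)$; this lets us instead argue about maximum nullity, or, alternatively, directly construct a forcing set. The direct construction: for each $G'_i$ pick a minimum zero forcing set $B_i$ of $G'_i$, but arrange the choice so that when $G'_i$ is a clique we take $B_i$ to omit the attaching vertices $v_i, v_{i+1}$ (possible since a clique on $t$ vertices has many zero forcing sets of size $t-1$, and $t-1\ge$ number of attaching vertices when $t\ge 3$; the $t=2$ edge-clique and the star cases need separate small checks using conditions (3)–(4), which force the attaching vertices to be leaves). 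Then take $B=\bigcup_i B_i$ with the attaching vertices removed appropriately; the overlap bookkeeping yields $|B| = \sum \Z(G'_i) - (k-1)$ because consecutive pieces share exactly one vertex $v_i$ and we have arranged not to double-count or, where a shared vertex must be in the set, to count it once. One then checks $B$ forces $G$: within each $G'_i$, once its "own" region is being filled, the set $B_i$ forces $G'_i$, and the $v_i$ act as the handoff between consecutive blocks in the path order, so forcing propagates from $G'_1$ through $G'_k$.

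The main obstacle I expect is the upper-bound construction's edge cases: making the per-block zero forcing sets compatible at the shared cut-vertices so that the union has exactly the claimed size and still forces. Cliques of order $2$ (which are just edges, so $\Z=1$) and star pieces (where $\Z$ equals the number of leaves minus something, and conditions (3)–(4) pin down that $v_i,v_{i+1}$ are leaves) behave differently from larger cliques, and the accounting of which shared vertices land in $B$ must be done carefully so the $-(k-1)$ savings is exactly realized. An alternative that may sidestep this is to use Corollary \ref{cor:star-cli-path} together with a maximum-nullity matrix construction: build $A\in\mathcal S(G)$ blockwise, choosing on each $G'_i$ a matrix realizing $\M(G'_i)=\Z(G'_i)$ and gluing along the $1\times 1$ overlaps at the $v_i$; a rank/nullity count across the cut-vertices gives $\M(G)\ge \sum\M(G'_i)-(k-1)$, i.e. the desired upper bound on $\Z(G)$ after invoking Corollary \ref{cor:star-cli-path} on each piece (stars and cliques are block-clique). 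Combined with the lower bound above, equality follows.
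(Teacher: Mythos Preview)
Your lower-bound argument matches the paper's: induction on $k$ using Lemma \ref{lem0} at the cut-vertex $v_k$. Your worry about $G-v_k$ having more than two components is unfounded: conditions (2)--(4) of Definition \ref{def:star-cliq path} force $v_k$ to be a leaf of any star $G'_{k-1}$ and a non-cut vertex of any clique $G'_{k-1}$, so $G_{k-1}-v_k$ stays connected.

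Your upper-bound plan has two real problems. First, in approach (a) you say you will choose a minimum zero forcing set $B_i$ of a clique $G'_i\cong K_t$ that \emph{omits both} attaching vertices $v_i,v_{i+1}$. That is impossible: $|B_i|=t-1$, so $B_i$ omits exactly one vertex of $K_t$. Your parenthetical ``$t-1\ge$ number of attaching vertices'' conflates the size of $B_i$ with the size of its complement. The paper instead arranges $v_i\in B'_i$ and $v_{i+1}\notin B'_i$ (one in, one out), then sets $B=B'_1\cup\bigcup_{i\ge2}(B'_i\setminus\{v_i\})$, which genuinely has size $\sum\Z(G'_i)-(k-1)$. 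Second, and more importantly, you never identify the condition that makes the glued set actually force $G$. The obstruction is that a vertex of $G'_{i-1}$ adjacent to $v_i$ cannot force inside $G'_{i-1}$ while $v_i$ still has unfilled neighbours in $G'_i$. The paper handles this by choosing $B'_{i-1}$ so that $v_i$ \emph{never performs a force} in $G'_{i-1}$; then forcing proceeds block by block, with $v_i$ being filled last in $G'_{i-1}$ and thereby completing $B'_i$ in $G'_i$. Without this observation your ``forcing propagates from $G'_1$ through $G'_k$'' is an assertion, not an argument.

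Your alternative (b) goes in the wrong direction. Constructing $A\in\mathcal S(G)$ with large nullity gives $\M(G)\ge\sum\M(G'_i)-(k-1)$, which via $\Z(G)=\M(G)$ yields $\Z(G)\ge\sum\Z(G'_i)-(k-1)$, the lower bound you already have, not the upper bound you need.
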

\begin{proof}
First we show by induction on $k$ that  $\Z(G)\ge  \sum_{i=1}^k \Z(G'_i)-k+1$.  This  is immediate when $k=1$. Assume the result is true for $G_{k-1}=\cup_{i=1}^{k-1}G'_i$.  
Applying Lemma \ref{lem0} to the cut-vertex $v_k$ of $G_k$ together with the induction assumption gives
 \[
 \Z(G_k)\ge  \Z(G_{k-1})+\Z(G'_k)-2+1\ge \lp \sum_{i=1}^{k-1} \Z(G'_i)-(k-1)+1\rp +\Z(G'_k)-1=\sum_{i=1}^k \Z(G'_i)-k+1.
 \]

 By definition, $B_k=B'_1\cup \bigcup_{i=2}^k \lp B'_i\setminus \{v_i\}\rp$ is a zero forcing set  of $G_k$. Therefore,  $\Z(G)\le |B_k|= \sum_{i=1}^k \Z(G'_i)-k+1$.   
\end{proof}
% \begin{lemma}{\rm \cite{HogLinShad}}\label{Lem1} %\cite{Draft-Hog}
% Let $G$ and $G'$ be graphs each of which has an edge. Then
% $$M(G\Box G')\ge M(G)M(G')+1.$$
% \end{lemma}

%\begin{theorem}\label{t:scp1}Let $G$ be a star-clique path constructed iteratively from $H_1, \dots, H_k$. Then $\Z(G)=\sum_{i=1}^k Z(H_i)-k+1.$\end{theorem}
%{\blu The following result shows that the complete graph $K_r$ and a star-clique path $G$ satisfy Conjecture \ref{con:main}}

The  next theorem is the main result of this section and gives extremal graphs for Conjecture \ref{con:main}. 

\begin{theorem}\label{t:scp1}
 Let $G$ be a star-clique path and let $r\ge 2$. Then,
\[
\Z(K_r \Box G)=(r-1) \Z(G)+1  = \Z(K_r)\Z(G)+1.
\]
\end{theorem}
\begin{proof} Let $G$ be a star-clique path constructed from $ G'_1, \dots, G'_k$ according to Definition \ref{def:star-cliq path}.  For purposes of this proof, we view $K_{1,1}$ as a clique and $K_{1,2}$ as $G'_i=K_2$ and $G'_{i+1}=K_2$, so we assume a star is $K_{1,s}$ with $s\ge 3$.  Note first that $\Z(K_r \Box G)\ge (r-1) \Z(G)+1$ by Theorem  \ref{Thm1} and Corollary \ref{cor:star-cli-path}.   
 Choose a canonical star-clique path set $\hat B$ for $G$. We describe vertices by the vertex label in $G$ and the copy number $j=1,\dots,r$. In $K_r \Box G$, we define a  set  $B$ consisting of the same canonical star-clique path set $\hat B$ in the  $j$th copy of $G$ for $1\le j \le  r-1$, together with one vertex  $u$ in the $r$th copy of $G_1'$ such that  $u\ne v_2$  and $u$ is not  the  center of $G_1'$ if $G_1'$ is a star. 
 We show that the vertex set $B$ defined is a zero forcing set: 
 
 If $G'_1$ is a clique, then $v_2$ can be forced by $u$ in the $j$th of the copy of $G'_1$ for $j=1,\dots,r-1$.  Then each of the  unfilled vertices $w\ne v_2$ in the $r$th copy of $G'_1$ can be forced by its neighbor $w$ in another (fully filled) copy of $G'_1$.  Then $v_2$ in copy $r$ can be forced by $u$ in copy $r$.
 If $G'_1$ is a star, then the center vertex $c$ can be forced by $u$ in the $j$th copy of $G'_1$ for $j=1,\dots,r$.  Then  each unfilled vertex $w\ne v_2$ in the $r$th copy of $G'_1$ can be forced by its neighbor $w$ in another  copy of $G'_1$. Finally,  $v_2$ can be forced by $c$ in  the $j$th copy of $G'_1$ for $j=1,\dots,r$.

Now the $j$th copy of $G'_2$ contains a zero forcing set of  $G'_2$ for $j=1,\dots,r-1$ and $v_2$ is filled in the $r$th copy, so we can repeat the process just described for $G'_1$ for $G'_2$, and then additional $G'_i$ as needed to fill all vertices of $G$. This together with Proposition \ref{lem1} proves that $\Z(K_r \Box G)\le (r-1)\Z(G)+1.$ 
\end{proof}

\begin{question}\label{q:starcliquecoverse}
Does converse of Theorem \ref{t:scp1} hold? That is, does 
\[ \Z(G \Box G') = \Z(G)\Z(G')+1, \]
imply that $G$ is a complete graph of order at least two and $G'$ is a star-clique path or vice versa?
\end{question}

Computations in \emph{Sage} \cite{sage} have established that for $n\le 9$, every graph $G’$ of order $n$ satisfying $\Z(K_2\Box G’)=\Z(K_2)\Z(G’)+1$ is a star-clique path.

%%%%%%%%%%%%%%%%%%%%%%%%%%%%%
%%%%%%%%%%%%%%%%%%%%%%%%%%%%%%%%%%%%%%%%
%\newpage
\section{Concluding remarks}\label{s:conclude}

In this section we summarize 
values of the fort number, fractional zero forcing number, and zero forcing number for various families of graphs in Table \ref{t:ftzstarzvalues}. We also discuss  a possible relationship between maximum nullity $\M$ and $\ft,\Z^*$  in Section \ref{s:ft-less-N} and other open questions  in Section \ref{subsec:open}.

\subsection{Summary of parameter values for various graph families}

In Table \ref{t:ftzstarzvalues}, the result number listed (from within this paper) establishes the fort number and fractional zero forcing number.  In each case, the zero forcing number can be found in at least one of the numbered result,  \cite[Theorems 9.5, 9.12]{HogLinShad}, or  the listed reference.

\begin{table}[h!]
\renewcommand{\arraystretch}{1.3}
\begin{center}
\noindent \begin{tabular}{| l | c | c| c| c| c|}
\hline
result \#&  $G$ & order & $\ft(G)$ & $\Z^*(G)$ & $\Z(G)$\\
\hline
\ref{ex:path} & $P_n$  & $n$& $1$ & $1 $ & $1$ \\
\hline
\ref{ex:complete} & $K_n$  & $n$& $\lf \frac n 2\rf $ & $ \frac n 2 $ & $n-1$ \\
\hline
\ref{ex:Kpq} & $K_{p,q},\, 2\le p,q$  & $p+q$& $\lf \frac p 2\rf +\lf \frac q 2\rf$ &  $\frac{p+q}{2}$  & $p+q-2$ \\
& $K_{1,q},\, 2\le q$  & $1+q$& $\lf \frac q 2\rf$ & $ \frac {q} 2 $  & $q-1$ \\
\hline
\ref{ex:empty} & $\OL{K_n}$  & $n$& $n$ & $n$ & $n$ \\
\hline
\ref{ex:cycle} & $C_{2k}$   & $2k$& $2$ & $2 $ & $2$ \\
 & $C_{2k+1}$   & $2k+1$& $1$ & $ \frac{2k+1}{k+1}$ & $2$ \\
 \hline
\ref{ex:double_foliation} & $G'\circ 2K_1$  & $3|V(G')|$& $|V(G')|$ & $|V(G')|$ & $|V(G')|$ \\
\hline
\ref{ex:triangles}, \ref{p:Zcorona} 
& $G'\circ K_2$  & $3|V(G')|$& $|V(G')|$ & $|V(G')|$ & $|V(G')|+\Z(G')$ \\
\hline
\ref{ex:petersen} & Petersen graph  & $10$ & $2$ & $\frac 5 2$ & $5$ \\
\hline
\ref{ex:hypercube}, \cite{aim} & $Q_d$  & $2^d$&  & $\frac{2^d}d$ & $2^{d-1}$ \\\hline
\ref{ex:sK3veeK1} & $sK_3\vee K_1,\, s\ge 2$  & $3s+1$& $s+1$ & $\frac{3s}2$ & $2s+1$ \\
\hline
\ref{ex:sK2veeK2} & $sK_2\vee K_2,\, s\ge 2$  & $2s+2$& $s+1$ & $s+1$ & $s+2$ \\
\hline
\ref{thm:LL} & $T\in\T$  & & $\frac{\ell(T)} 2$ & $\frac{\ell(T)} 2$ & $\frac{\ell(T)} 2$
\\
\hline
\ref{ex:2parpath-evencycle} & polygonal path, even order  & $2k$ & $2$ & $2$ & $2$ \\
\hline
\ref{ex-sharp-bound}, \ref{boxfort} & $K_{2m} \Box K_{2m}, m \ge 2$ & $4m^2$ & $m^2$ & $m^2$ & $4m^2-4m+2$\\
\hline
\end{tabular}
\caption{Summary of values of fort number and fractional zero forcing number for families of graphs. }\label{t:ftzstarzvalues}
\end{center}
\end{table}

\subsection{A possible relationship between maximum nullity $\M$ and $\ft$ and/or $\Z^*$}\label{s:ft-less-N}
 Recall that the zero forcing number $\Z(G)$ is a well-known upper bound on the maximum nullity of a graph $\M(G)$. It is natural to explore the relationship between $\M(G)$ and the new parameters $\ft(G)$ and $\Z^*(G)$.  We are unaware of any graphs for which $\ft(G) > \M(G)$, but we point out that $\M(G)= \Z(G) \geq \Z^*(G) \geq \ft(G)$ for all graphs order 7 or less \cite {small}  and for all but one of the graph families listed in Table \ref{t:ftzstarzvalues}.  For all but four of  these graph families, the values of $\M(G)$ and $\Z(G)$  appear in \cite[Theorems 9.5, 9.12]{HogLinShad} and/or \cite{aim}.  The exceptions are  $G'\circ 2K_1$, $K_1\vee sK_3$, $K_2\vee sK_2$, and $G'\circ K_2$.  
 
 There is a standard technique that can be used to show $\M(G)=\Z(G)$ for $G'\circ 2K_1$, $K_1\vee sK_3$ and $K_2\vee sK_2$ and to show that $\Z^*(G) <\M(G)$ for $G'\circ K_2$.  
Recall that the \emph{minimum rank} of $G$ is  
$\mr(G)=\min\{\rank A : A\in \mathcal{S}(G)\}$, and $\mr(G)+\M(G)=|V(G)|$.
Viewing a graph  $G$ as the (possibly overlapping) union of 
 $G_i, i=1,\dots,t$,  it is known  \cite{HogLinShad} that
 \[ \mr\left( \bigcup_{i=1}^t G_i \right) \leq \sum_{i=1}^t \mr(G_i). \]

 For $G=G'\circ 2K_1$, we see that $G$ is the union of stars, one for each vertex $v$ of $G'$ centered on $v$ and having as leaves all neighbors of $v$ in $G$.  Since $\mr(K_{1,q})=2$ for $q\ge 2$, we have $\mr(G)\le 2|V({G'})|$, which implies $\M(G)\ge |V({G'})|=\Z(G)$.

 For $G=K_1\vee sK_3$, we see that $G$ is the union of $s$ copies of $K_4$, one for each $K_3$ together with the vertex of the $K_1$.  Since $\mr(K_{4})=1$ and $s\ge 2$, we have $\mr(G)\le s$, which implies $\M(G)\ge 2s+1=\Z(G)$.  The case $G=K_2\vee sK_2$ is similar.

 Finally consider $G=G'\circ K_2$, where $G$ is the union of $G'$ and $|V(G')|$ copies of $K_3$.  Thus $\mr(G)\le \mr(G')+|V(G')|$.  This implies $\M(G)\ge \M(G')+|V(G')|$, which may be less than $|V(G')|+\Z(G')$.  However, $\ft(G)=\Z^*(G)=|V(G')|<\M(G)$.

 \begin{question} \label{quest:M_versus_ft}
    Is $\M(G)\ge \ft(G)$ for every graph $G$?   Is $\M(G)\ge \Z^*(G)$ for every graph $G$?
\end{question}
  While we cannot prove even the weaker of the two inequalities,   $\M(G) \ge \ft(G)$, we can prove that the fort number is at most the maximum nullity among combinatorially symmetric matrices described by the graph.

A matrix $A$ is \emph{combinatorially symmetric} if $a_{ij}\ne 0$ if and only if $a_{ji}\ne 0$. The \emph{graph} $\G(A)$ of  a combinatorially symmetric matrix  $A$  is the graph with vertices $\{1,\dots,n \}$  and edges $\{ ij :a_{ij} \ne 0,  1 \le i <j \le n \}$; 
whenever we write $\G(A)$, it is assumed that $A$ is combinatorially symmetric. 
For a graph $G$ with $V(G)=\{1,2,\ldots,n\}$, 
 the maximum nullity of combinatorially symmetric matrices described by $G$, $\n(G)=\max\{\nul A :A\in\Rnn, \G(A)=G\}$,  has not been as widely studied as $\M(G)$, but it  is known that $\n(G)\le\Z(G)$ \cite{aim}. 

The next result connects forts and null vectors.  For a real vector $\bx=[x_i]$, the \emph{support} of $\bx$ is the set of indices $i$ for
which $x_i\ne 0$.
 \begin{theorem}\label{sparkthm}
   {\rm\cite{sparkx}}  Let $G$ be a graph of order $n$ and let $\bzero\ne \bx\in{\mathbb{R}^n}$.
  There exists a matrix $A\in S(G)$ such that $A\bx=\bzero$ if
and only if the support of $\bx$ is a fort of $G$. 
\end{theorem}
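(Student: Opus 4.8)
The plan is to prove the two implications separately: the forward direction is a direct reading of the rows of $A\bx=\bzero$, and the backward direction is an explicit construction of a matrix in $S(G)$. Throughout I read the statement with $\bx\ne\bzero$, since if $\bx=\bzero$ then $\supp(\bx)=\emptyset$ is not a fort (and $A\bzero=\bzero$ holds trivially), so a nonzero hypothesis is implicit.

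For the forward direction, suppose $A\in S(G)$ satisfies $A\bx=\bzero$, and set $F=\supp(\bx)$, which is nonempty. Fix $v\in V(G)\setminus F$. Reading off row $v$ of $A\bx=\bzero$ and using $x_v=0$ gives $\sum_{j\sim v}a_{vj}x_j=0$; since for $j\ne v$ we have $a_{vj}\ne 0$ exactly when $j\sim v$, and $x_j\ne 0$ exactly when $j\in F$, this reduces to $\sum_{j\in N_G(v)\cap F}a_{vj}x_j=0$. If $|N_G(v)\cap F|=1$, say $N_G(v)\cap F=\{w\}$, the equation reads $a_{vw}x_w=0$, contradicting $a_{vw}\ne 0$ and $x_w\ne 0$. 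Hence $|N_G(v)\cap F|\ne 1$ for every $v\notin F$, i.e., $F$ is a fort.

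For the backward direction, suppose $F=\supp(\bx)$ is a fort; I would build $A$ entry by entry. The key observation is that the equations $(A\bx)_v=0$ split into two kinds. For $v\in F$, the coefficient of $a_{vv}$ is $x_v\ne 0$, so once the off-diagonal entries are fixed, the equation is solved by a unique choice of the (unconstrained) real diagonal entry $a_{vv}$. For $v\notin F$, the equation is $\sum_{j\in N_G(v)\cap F}a_{vj}x_j=0$, involving only edges from $v$ into $F$; these constraint sets are pairwise disjoint as $v$ ranges over $V(G)\setminus F$ (an edge between $V(G)\setminus F$ and $F$ has a unique endpoint outside $F$), so the constraints decouple and can be handled one vertex at a time. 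For each $v\notin F$ the fort condition forces $|N_G(v)\cap F|\in\{0,2,3,\dots\}$: if it is $0$ there is nothing to do, while if it equals some $k\ge 2$ I must choose nonzero reals $a_{vj}$, $j\in N_G(v)\cap F$, with $\sum a_{vj}x_j=0$; such a choice exists because the solution set $\{c\in\mathbb{R}^k:\sum_j c_jx_j=0\}$ is a subspace of dimension $k-1\ge 1$ not contained in any coordinate hyperplane (all $x_j\ne 0$ and $k\ge 2$), hence not covered by the finitely many coordinate hyperplanes. Finally, set every remaining off-diagonal entry equal to $1$ (these are the edges with both endpoints in $F$ and the edges with both endpoints in $V(G)\setminus F$), set $a_{vv}=0$ for $v\notin F$, and set $a_{vv}$ for $v\in F$ as dictated above. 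Then $A$ is symmetric with the correct zero--nonzero pattern, so $A\in S(G)$, and $A\bx=\bzero$ by construction.

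The remaining checks are routine: an edge with both endpoints in $V(G)\setminus F$ contributes $a_{vj}x_j=0$ to each relevant equation and so enters no constraint, and an edge with both endpoints in $F$ appears only in the two diagonal-determined equations. The main obstacle is the backward direction: one must recognize that the fort hypothesis is precisely what makes each local linear system at a vertex $v\notin F$ solvable with all off-diagonal coefficients nonzero (the excluded case $|N_G(v)\cap F|=1$ is exactly the obstruction), and that symmetry causes no conflict because the edge sets governed by distinct vertices outside $F$ are disjoint while edges inside $F$ only affect the freely chosen diagonal entries.
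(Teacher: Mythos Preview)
Your proof is correct. Note, however, that the paper does not supply its own proof of this statement: it is quoted as a result of Hicks, Brimkov, Deaett, Haas, Mikesell, Roberson, and Smith \cite{sparkx} and used as a black box in the proof of Proposition~\ref{thm:N_ft}. So there is no in-paper argument to compare against. Your argument is the natural one and matches the standard proof: the forward direction is a direct reading of a single row of $A\bx=\bzero$, and the backward direction builds $A$ by first choosing the off-diagonal entries on edges crossing from $V(G)\setminus F$ into $F$ (where the fort hypothesis $|N_G(v)\cap F|\ne 1$ is exactly what allows a nonzero solution to the single linear constraint at each such $v$), then setting the remaining off-diagonals arbitrarily nonzero, and finally absorbing everything into the free diagonal entries at vertices of $F$. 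Your observation that the cross-edges partition according to their unique endpoint outside $F$, so that symmetry creates no conflict, is the one point that needs care, and you handle it cleanly.
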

For a graph $G$ and fort $F$ of $G$, the \emph{incidence vector} $\bv=[v_j]$ of $F$ is the vector such that
$v_j = 1$ if $j\in F$ and 0 if $j\not\in F$.

\begin{proposition}\label{thm:N_ft}
Let $G$ be a graph with disjoint forts $F_{1},\ldots,F_{k}$, where $1\leq k\leq n$  and let $\mathbf{v}_{1},\ldots,\mathbf{v}_{k}$ be the corresponding incidence vectors.  Then  $\mathbf{v}_{1},\ldots,\mathbf{v}_{k}$ are null vectors for some $A\in\Rnn$ such that $\G(A)=G$. Thus, $\n(G)\geq\ft(G)$.
\end{proposition}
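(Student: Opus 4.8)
The plan is to construct a single matrix $A$ with $\G(A)=G$ that simultaneously annihilates all $k$ incidence vectors $\mathbf{v}_1,\dots,\mathbf{v}_k$. By Theorem~\ref{sparkthm}, for each individual fort $F_i$ there is a symmetric matrix $A_i\in S(G)$ with $A_i\mathbf{v}_i=\bzero$; the obstacle is that these $A_i$ need not coincide, and we are now allowed to use combinatorially symmetric (not symmetric) matrices, so we have more freedom. First I would exploit disjointness: since $F_1,\dots,F_k$ are pairwise disjoint, each vertex $j$ lies in at most one fort, so I can try to build $A$ row by row, where the constraint imposed on row $j$ by the condition $A\mathbf{v}_i=\bzero$ only involves the (unique, if any) fort containing a neighbor structure relevant to $j$.

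The key steps, in order, would be: (1) Fix a vertex $j$. Among the forts $F_1,\dots,F_k$, identify which ones "touch" row $j$ in the sense that $N_G(j)\cap F_i\neq\emptyset$ or $j\in F_i$; the condition $(A\mathbf{v}_i)_j=0$ reads $\sum_{\ell\in F_i} a_{j\ell}=0$. (2) Observe that the fort condition guarantees $|N_G(j)\cap F_i|\neq 1$ for $j\notin F_i$, so for each touching fort the sum $\sum_{\ell\in F_i}a_{j\ell}$ ranges over at least two freely-chosen off-diagonal entries (when $j\notin F_i$) or over the diagonal entry $a_{jj}$ together with the off-diagonal entries to neighbors in $F_i$ (when $j\in F_i$); in every case one can choose nonzero values on all of $\G(A)$'s prescribed positions in row $j$ so that each required sum vanishes — e.g. pick generic nonzero values on all but one entry per fort and solve for the last, being careful that the "last" entries for different forts are distinct, which holds by disjointness of the $F_i$ (their vertex sets, hence the relevant neighbor-in-$F_i$ index sets, are disjoint). (3) Do this for every row independently; combinatorial symmetry is automatic if we insist $a_{j\ell}\neq 0 \iff \{j,\ell\}\in E(G)$, which we can maintain since we only ever assign nonzero values to the edge positions. (4) Conclude $A\mathbf{v}_i=\bzero$ for all $i$, so the $\mathbf{v}_i$ lie in the null space of $A$; they are linearly independent (disjoint nonempty supports), giving $\nul A\geq k$, hence $\n(G)\geq k$, and taking $k=\ft(G)$ finishes.

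An alternative, cleaner route that avoids the row-by-row bookkeeping: build $A$ as a sum $A = \sum_{i=1}^k B_i + D$ where $B_i$ is supported only on edges with at least one endpoint in $F_i$ and is chosen so that $B_i\mathbf{v}_i=\bzero$ while $B_i\mathbf{v}_{i'}$ is handled separately, then patch the diagonal $D$ to kill residual terms. However I think the first approach is more transparent once one notes the crucial simplification from disjointness. The main obstacle I anticipate is bookkeeping in step (2): ensuring that a single choice of nonzero off-diagonal entries in row $j$ can satisfy several linear equations $\sum_{\ell\in F_i}a_{j\ell}=0$ at once — this works precisely because disjointness of the forts means these equations involve disjoint sets of variables, so they decouple, and each is solvable (has at least two free variables, or includes the diagonal) thanks to the defining property of forts; one must also double-check the edge case where $j$ is isolated in $G$ (then $\{j\}$ may be one of the forts and the only constraint is $a_{jj}=0$, which is permitted since the diagonal is unconstrained in the definition of $\G(A)$).
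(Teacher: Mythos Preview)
Your row-by-row construction is correct, and in particular the crucial observation---that disjointness of the $F_i$ makes the equations $\sum_{\ell\in F_i}a_{j\ell}=0$ (for varying $i$) involve disjoint sets of row-$j$ entries, hence decouple---is exactly the right idea. The edge cases (isolated vertices, $j\in F_i$ so the diagonal entry is available) are handled properly.

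The paper takes a different and somewhat slicker route: rather than building $A$ row by row, it invokes Theorem~\ref{sparkthm} to obtain, for each $i$, a matrix $A_i\in S(G)$ with $A_i\mathbf v_i=\bzero$, and then assembles $A$ by \emph{column-splicing}---taking the columns indexed by $F_i$ from $A_i$ (and any leftover columns from, say, $A_k$). Since $A\mathbf v_i$ is the sum of the columns of $A$ indexed by $F_i$, and those columns were copied verbatim from $A_i$, one gets $A\mathbf v_i=A_i\mathbf v_i=\bzero$ immediately; combinatorial symmetry of $A$ follows because every column inherits the correct nonzero pattern from some $A_i\in S(G)$. This approach uses Theorem~\ref{sparkthm} as a black box and avoids all the per-row solvability bookkeeping. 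Your approach, by contrast, is more self-contained (it essentially reproves the relevant part of Theorem~\ref{sparkthm} inside the argument) and makes the role of the fort condition $|N_G(j)\cap F_i|\neq 1$ more visibly the source of solvability. Both exploit disjointness in the same way: the paper to ensure the column blocks don't overlap, you to ensure the row-wise linear constraints don't share variables.
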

\bpf Without loss of generality, assume the vertices within each fort are numbered consecutively, so that $F_i=\{v_{\ell_{i-1}+1},\dots, v_{\ell_i}\}$ where $\ell_0=0$.
For $i=1,\dots,k$, apply Theorem \ref{sparkthm} to choose matrices $A_i\in\Rnn$ such that $\G(A_i)=G$ and $A_i\bv_i=\bzero$. 
   Construct matrix $A$ by choosing columns $\ell_{i-1}+1,\dots,\ell_i$ from $A_i$ (and columns $\ell_{k}+1,\dots,\ell_n$ from $A_k$ if necessary).  Then $\G(A)=G$ and $A\bv_i=\bzero$ for $i=1,\dots,k$ because $A\bv_i=A_i\bv_i$ is the sum of the columns   associated with the indices in $F_i$ and the forts are disjoint. 
  The last statement is immediate. 
\epf

{\subsection{Summary of open questions} \label{subsec:open}

This work provides a basis for many ripe and interesting questions. The conjectured lower bound on zero forcing number of a Cartesian product (Conjecture \ref{con:main}) remains open in general, and Question \ref{q:starcliquecoverse} asks whether Theorem \ref{t:scp1} characterizes all graphs attaining equality in the bound in Conjecture \ref{con:main}.

Beyond the main conjecture, there are plenty of directions of study for these new parameters.  Question \ref{fort_question}
asks whether $\Z^*(G) = \Z(G)$ if and only if $\ft(G)=\Z(G)$ for all graphs $G$.
Question \ref{quest:ccr_zstar} asks what color-change rule might be used to compute $\Z^*(G)$. Such a rule could potentially make it easier to compute or bound $\Z^*(G)$ without enumerating many forts.
%Question \ref{quest:how_many_forts} inquires on the maximum number of minimal forts for graphs on $n$ vertices; in particular, it is unclear whether there is a family of graphs each with exponentially many minimal forts. 
Finally, Question \ref{quest:M_versus_ft} asks whether $\ft(G)$ and/or $\Z^*(G)$ provide a lower bound for $\M(G)$; this is interesting as the original motivation for the classical zero forcing number $\Z(G)$ was to provide an upper bound on $\M(G)$.
}

\begin{table}[h!]
    \centering
    \begin{tabular}{ll}
        \toprule
        \textbf{Reference} & \textbf{Short description} \\
        \midrule
        Conjecture \ref{con:main} & A Vizing-like lower bound on zero forcing number of a Cartesian product. \\ 
        Question \ref{quest:ccr_zstar} & Color-change rule for computing $\Z^*(G)$. \\
        Question \ref{fort_question} & Relationship between $\Z^*(G) = \Z(G)$ and $\ft(G)=\Z(G)$. \\
        Question \ref{q:starcliquecoverse} & Characterization of graphs attaining equality in the bound in Conjecture \ref{con:main}. \\
        Question \ref{quest:M_versus_ft} & Whether $\ft(G)$ and/or $\Z^*(G)$ bound $M(G)$ from below. \\
        \bottomrule
    \end{tabular}
    \caption{A summary of open questions presented throughout this article.}
\label{tab:openquestions}
\end{table}

%%%%%%%%%%%%%%%%%%%%%%%%%%%%%
%%%%%%%%%%%%%%%%%%%%%%%%%%%%%%%%%%%%%%%%

\newpage \section*{Acknowledgments}
The authors and this research were supported by the American Institute of Mathematics (AIM) and grants from the National Science Foundation (NSF).

%\section*{Statements and Declarations}The authors and this research were supported by the American Institute of Mathematics (AIM) and grants from the National Science Foundation (NSF). %{\red [any other grants go here]}

%The authors have no relevant financial or non-financial interests to disclose.

%All authors contributed to this study {\red equally}. All authors read and approved the final manuscript.
%The authors thank the referees for helpful comments that have improved the paper.   

%%%%%%%%%%%%%%%%%%%%%%%%%%%%%%%%%%%%%%%%%%%%%%%%%%%%%%%%
%                   Bibliography
%%%%%%%%%%%%%%%%%%%%%%%%%%%%%%%%%%%%%%%%%%%%%%%%%%%%%%%%

\end{document}